\documentclass[12]{amsart}
\usepackage[all]{xy}
\usepackage{bbm}
\usepackage{amsmath}
\usepackage{amsfonts}
\usepackage{amssymb}
\usepackage{amsthm}
\usepackage{amscd}
\usepackage{latexsym}
\usepackage{txfonts}
\usepackage{multirow}
\usepackage{float}
\usepackage{threeparttable}
\usepackage{enumerate}
\usepackage{rotating}
\usepackage{tabularx}
\usepackage[colorinlistoftodos]{todonotes}
\usepackage{hyperref}
\theoremstyle{plain}
\newtheorem{theorem}{Theorem}
\newtheorem{lemma}[theorem]{Lemma}
\newtheorem{cor}[theorem]{Corollary}

\newtheorem{conj}[theorem]{Conjecture}

\newtheorem*{theorem*}{Theorem}
\theoremstyle{remark} 
\theoremstyle{remark} 
\newcommand{\bb}{\mathbb}
\newcommand{\mrm}{\mathrm}
\newcommand{\ov}{\overline}
\newcommand{\Sym}{\mathrm{Sym}}
\newcommand{\Der}{\mathfrak{Der}}
\newcommand{\I}{\mathcal{S}}

\newcommand{\ind}{\mathbf{1}}

\newcommand{\mcal}{\mathcal}
\newcommand{\mbf}{\mathbf}
\newcommand{\mfk}{\mathfrak}
\newcommand{\one}{\mathbbm{1}}
\newcommand{\Proj}{\operatorname{Proj}}
\newcommand{\de}{\mathbf{d}}
\newcommand{\pr}{\mathbf{p}}
\newcommand{\pgl}{\mathrm{PGL}}
\newcommand{\psl}{\mathrm{PSL}}

\newcommand{\agl}{\mathrm{AGL}}
\newcommand{\gl}{\mathrm{GL}}
\newcommand{\sym}{\mathrm{Sym}} 
\newcommand{\irr}{\mathrm{Irr}}
\newcommand{\ovr}{\overrightarrow}
\newcommand{\ups}{\Upsilon}
\newcommand{\vp}{V^{(2)}}

\title{All $3$-transitive groups satisfy the strict-EKR property.} 
\date{}

\author[V.R.T. Pantangi]{Venkata Raghu Tej Pantangi}
\address{Department of Mathematics and Statistics, University of Regina, Regina, Saskatchewan S4S 0A2, Canada}
\thanks{The author is funded by the Pacific Institute of Mathematical Sciences postdoctoral fellowship.}
\email{pvrt1990@gmail.com}
\keywords{Erd\H{o}s--Ko--Rado theorems, permutation groups, derangement graphs}
\subjclass{05D99, 05E18, 05E30}

\begin{document}

\begin{abstract}
A subset $S$ of a transitive permutation group $G \leq \Sym(n)$ is said to be  an intersecting set if, for every $g_{1},g_{2}\in S$, there is an $i \in [n]$ such that $g_{1}(i)=g_{2}(i)$. The stabilizer of a point in $[n]$ and its cosets are intersecting sets of size $|G|/n$. Such families are referred to as canonical intersecting sets. A result by Meagher, Spiga, and Tiep states that if $G$ is a $2$-transitive group, then $|G|/n$ is the size of an intersecting set of maximum size in $G$. In some $2$-transitive groups (for instance $\Sym(n)$, $\mrm{Alt}(n)$), every intersecting set of maximum possible size is canonical. A permutation group, in which every intersecting family of maximum possible size is canonical, is said to satisfy the strict-EKR property. In this article, we investigate the structure of intersecting sets in $3$-transitive groups. 
A conjecture by Meagher and Spiga states that all $3$-transitive groups satisfy the strict-EKR property. Meagher and Spiga showed that this is true for the $3$-transitive group $\pgl(2,q)$. Using the classification of $3$-transitive groups and some results in literature, the conjecture reduces to showing that the $3$-transitive group $\agl(n,2)$ satisfies the strict-EKR property. We show that $\agl(n,2)$ satisfies the strict-EKR property and as a consequence, we prove Meagher and Spiga's conjecture. We also prove a stronger result for $\agl(n,2)$ by showing that ``large'' intersecting sets in $\agl(n,2)$ must be a subset of a canonical intersecting set. This phenomenon is called stability. 
\end{abstract}

\maketitle
\section{Introduction}
Let $n$ and $k$ be positive integers such that $n< 2k$. A family $\mcal{F}$ of $k$-subsets in $[n]$ is said to be intersecting if the intersection of any two elements of $\mcal{F}$ is non-empty. Given $i \in [n]$, the family \[\mcal{F}_{i}=\{S\subset [n] :\ |S|=k\ \&\ i \in S\},\] is a canonical example of an intersecting family of size $\binom{n-1}{k-1}$. The celebrated Erd\H{o}s--Ko--Rado(EKR) Theorem (\cite{erdos1961intersection}) states that if $\mcal{F}$ is an intersecting family, then $|\mcal{F}|\leq \binom{n-1}{k-1}$; moreover, the equality holds if and only if $\mcal{F}=\mcal{F}_{i}$, for some $i \in [n]$.   

In this article, we consider EKR-type problems for permutation groups.\\ Let $G\leq \Sym(\Omega)$ be a finite permutation group. Two permutations $g_{1},g_{2}\in G$ are said to be intersecting if and only if $g_{1}g_{2}^{-1}$ fixes a point in $\Omega$. A subset $S\subseteq G$ is said to be an \emph{intersecting set} if any pair of permutations in $S$ are intersecting. Given $\alpha,\ \beta \in \Omega$, the set $\I_{\alpha\to\beta}=\{g \in G \ :\ g(\alpha)= \beta\}$ is a canonically occurring intersecting set of size $|G|/|\Omega|$. Such intersecting sets are referred to as \emph{canonical intersecting sets}. By a {\it maximum intersecting set}, we mean an intersecting set of maximum possible size. As in the case of the classical EKR Theorem, the following two questions are of interest:

(I) What is the size of a maximum intersecting set in $G$?

(II) What is the structure of maximum intersecting sets in $G$?

Extensive research has been done to answer these questions for various infinite families of permutation groups. A permutation group is said to satisfy the \emph{EKR} property, if its canonical intersecting sets are also maximum intersecting sets. A permutation group is said to satisfy the \emph{strict-EKR} property, if every maximum intersecting set is canonical.

The groups $\Sym(n)$ (see \cite{CK2003} and \cite{DF1977}), $\mrm{Alt}(n)$ (see \cite{AM2014}), $\mrm{PGL}(2,q)$ (see \cite{MS2011}), and $\mrm{PSL}(2,q)$ (see \cite{LPSX2018}) satisfy the strict-EKR property. In \cite{MSi2021} and \cite{MST2016} the EKR properties of $2$-transitive groups were investigated. They showed that all $2$-transitive groups satisfy the EKR property and another property known as the EKR-module property. Given a transitive permutation group $G$ of degree $n$, let $U_{G}$ be the $\bb{C}$-linear span of the indicator functions of the canonical intersecting sets in $G$. The permutation group $G$ is said to satisfy the EKR-module property if $\ind_{S} \in U_{G}$ (here $\ind_{S}$ is the indicator function of $S$ in $G$) for every maximum intersecting set $S$. There are many $2$-transitive groups that do not satisfy the strict-EKR property. For instance, when $n\geq 3$, it was show in \cite{Spiga2019}, that, a maximum intersecting set in $\pgl(n,q)$, is either a coset of the stabilizer of a point or a coset of the stabilizer of a hyperplane. 

Although not all $2$-transitive groups satisfy the strict-EKR property, in \cite{MS2011}, the authors conjecture that every $3$-transitive groups satisfy the strict-EKR property. As mentioned above, the $3$-transitive groups $\mrm{Alt}(n)$, $\Sym(n)$ and $\pgl(2,q)$ satisfy the strict-EKR property. In this paper, we consider another $3$-transitive permutation group, $\agl(n,2)$, with its natural action on the $2^{n}$ points of the $n$-dimensional vector space $\bb{F}^{n}_{2}$. 

\begin{theorem}\label{thm:agln2sker}
$\agl(n,2)$ satisfies the strict-EKR property.
\end{theorem}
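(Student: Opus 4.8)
The plan is to avoid the EKR-module machinery entirely and argue directly, via a clique--coclique reduction followed by an explicit functional equation. Write $V=\bb{F}_2^{n}$ and realise $\agl(n,2)$ as the set of pairs $(v,A)$ with $v\in V$, $A\in\gl(n,2)$, acting by $(v,A)\cdot x=Ax+v$; let $T=\{(v,\mathrm{Id}):v\in V\}$ be the translation subgroup and let $\Gamma$ be the derangement graph (the Cayley graph on $\agl(n,2)$ in which $g_1\sim g_2$ whenever $g_1g_2^{-1}$ is fixed-point-free), so that intersecting sets are precisely the cocliques of $\Gamma$.

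First I would reduce to transversals of $T$. Since every non-identity translation is fixed-point-free, $T$ is a clique of size $2^n$ in $\Gamma$; as $\Gamma$ is vertex-transitive the product of its clique and coclique numbers is at most $|\agl(n,2)|$, so no intersecting set exceeds $|\agl(n,2)|/2^n$, a bound met by the canonical sets. Hence a maximum intersecting set $S$ has size exactly $|\agl(n,2)|/2^n$; two of its elements lying in one right coset of $T$ would be adjacent, so $S$ meets every right coset of $T$ in exactly one point. Using $\agl(n,2)=T\rtimes\gl(n,2)$ and translating so that $\mathrm{Id}\in S$, this makes $S=\{(\phi(A),A):A\in\gl(n,2)\}$ for some $\phi\colon\gl(n,2)\to V$ with $\phi(\mathrm{Id})=0$, and a short computation of $(\phi(A),A)(\phi(B),B)^{-1}$ shows that $S$ is intersecting precisely when
\[
\phi(AB)+A\phi(B)\in\mathrm{Im}(A+\mathrm{Id})\qquad\text{for all }A,B\in\gl(n,2)
\]
(additions in characteristic $2$). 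Since the canonical sets containing $\mathrm{Id}$ are exactly the point stabilisers $\I_{\alpha\to\alpha}=\{((A+\mathrm{Id})\alpha,A):A\in\gl(n,2)\}$, it then remains to show the displayed condition forces $\phi(A)=(A+\mathrm{Id})\alpha$ for some fixed $\alpha\in V$.

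To do this I would dualise. A vector lies in $\mathrm{Im}(A+\mathrm{Id})$ exactly when it is annihilated by every linear functional $f$ with $f\circ A=f$; substituting into the displayed condition and simplifying with $f\circ A=f$ shows that for each non-zero functional $h$ the scalar $h(\phi(B))$ depends only on $h\circ B$. Transitivity of $\gl(n,2)$ on non-zero functionals then produces a well-defined $\Psi_h$ on the non-zero functionals with $h(\phi(B))=\Psi_h(h\circ B)$ and $\Psi_h(h)=0$, and expanding $(h_1+h_2)(\phi(B))=h_1(\phi(B))+h_2(\phi(B))$, together with transitivity of $\gl(n,2)$ on ordered pairs of independent functionals, gives
\[
\Psi_{h_1}(g_1)+\Psi_{h_2}(g_2)=\Psi_{h_1+h_2}(g_1+g_2)
\]
for all distinct non-zero $h_1,h_2$ and all distinct non-zero $g_1,g_2$; here I use crucially that over $\bb{F}_2$ any two distinct non-zero functionals are linearly independent and have non-zero sum. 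Specialising $g_1=h_1$ shows $\Psi_h(g)$ is unchanged under translating both arguments by a common functional, so $\Psi_h(g)=\xi(h+g)$ for a single function $\xi$ on $V$ with $\xi(0)=0$; the displayed identity then forces $\xi$ to be additive, hence a linear functional, say $\xi(f)=f(\alpha)$. Unwinding gives $h(\phi(B))=\xi(h+h\circ B)=h((B+\mathrm{Id})\alpha)$ for all $h$, hence $\phi(B)=(B+\mathrm{Id})\alpha$ and $S=\I_{\alpha\to\alpha}$, which completes the proof.

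The main obstacle I anticipate is not any single hard step but the bookkeeping in the functional equation: the characteristic-$2$ facts above are exactly what prevents it from degenerating, and the genericity needed to realise arbitrary pairs $(g_1,g_2)$ — and hence additivity of $\xi$ — becomes tight for $n\le 2$, so those cases (where $\agl(n,2)$ is a symmetric group) should be handled by hand. I would also expect the announced stability statement — that \emph{large}, not merely maximum, intersecting sets lie inside canonical ones — to be the genuinely harder part, since it is invisible to the clique--coclique argument and should require quantitative spectral control of $\Gamma$.
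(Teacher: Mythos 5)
Your proposal is correct, and it takes a genuinely different route from the paper. The paper proceeds via the module method: it invokes Theorem~\ref{thm:modulemethod} (which packages the Meagher--Spiga--Tiep EKR-module results) and reduces the theorem to showing $\mrm{rank}(M(G))=(2^{n}-1)(2^{n}-2)$, which in turn is reduced, through the decomposition of $\pi^{(2)}$ in Lemma~\ref{lem:permcharsetdecom} and Schur's lemma, to the character-sum computations over $\mbf{C}H$ in Lemma~\ref{lem:charactersums}. You avoid all of this by exploiting the regular normal subgroup $T$ of translations: its cosets partition $\agl(n,2)$ into cliques of size $2^{n}$ in the derangement graph, which yields the bound $|S|\le|\gl(n,2)|$ with no appeal to the $2$-transitive EKR theorem, and forces a maximum intersecting set containing the identity to be the graph of a map $\phi$ with $\phi(AB)+A\phi(B)\in\mrm{Im}(A+I)$; your dualization is sound at each step I checked --- $h(\phi(B))$ depends only on $h\circ B$ (take $A=B_1B_2^{-1}$), the two-variable identity uses transitivity on independent pairs, which over $\bb{F}_2$ is exactly ``distinct nonzero'' pairs, the forbidden translations $t\in\{h,g\}$ in the passage to $\xi(h+g)$ correspond precisely to the excluded zero functional, and the genericity count for additivity of $\xi$ (at most four excluded functionals out of $2^{n}$) works for $n\ge 3$, with $n\le 2$ legitimately deferred to $\Sym(2)$ and $\Sym(4)$. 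What each approach buys: yours is elementary, self-contained, and arguably more transparent, but it relies on the special feature that $\agl(n,2)$ has a sharply transitive normal subgroup consisting of derangements, so it neither extends to the other $3$-transitive groups nor produces the spectral data (that $\lambda_\psi$ is the least eigenvalue, the gap to the second-smallest) which the paper's argument for Theorem~\ref{thm:agln2stablity} requires; the paper's route is heavier, leaning on character theory and prior module-property results, but it stays within the uniform framework used across the conjecture and feeds directly into the stability analysis, which, as you anticipate, is the part your clique--coclique argument cannot see.
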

 
The classification of $3$-transitive groups is well known and can be found in many places in literature, for instance as Theorem 5.2 of \cite{HandbookC1}. From the classification, it follows that the socle of an almost simple $3$-transitive permutation group is either (i) $\psl(2,q)$; (ii) or $\mrm{Alt}(n)$; or (iii) one of the $3$-transitive Mathieu groups. We mentioned above that both $\psl(2,q)$ and $\mrm{Alt}(n)$ satisfy the strict-EKR property. In \cite{AM2015}, the authors checked that each of the $3$-transitive Mathieu groups satisfies the strict-EKR property. Now, from Theorem~7.2 of \cite{AM2014}, it follows that every almost simple $3$-transitive group satisfies the strict-EKR property. From Theorem 5.2 in \cite{HandbookC1}, it follows that an affine $3$-transitive group is either isomorphic to $\agl(n,2)$ or is isomorphic to $\mrm{Alt}(7) \ltimes \bb{F}^{4}_{2} $. It can be checked with the help of a computer that $\mrm{Alt}(7) \ltimes \bb{F}^{4}_{2}$ satisfies the strict-EKR property.
So $\agl(n,2)$ is the only $3$-transitive group whose EKR properties were previously unknown. So, as a consequence of Theorem~\ref{thm:agln2sker}, we complete the proof of Conjecture~3 of \cite{MS2011}.

\begin{cor}\label{cor:3t}
Every $3$-transitive group satisfies the strict-EKR property.
\end{cor}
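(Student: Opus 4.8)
The plan is to combine Theorem~\ref{thm:agln2sker} with the classification of finite $3$-transitive permutation groups (Theorem~5.2 of \cite{HandbookC1}), which partitions such groups into those that are almost simple and those that are of affine type. It then suffices to verify the strict-EKR property separately in each of these two cases.

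For an almost simple $3$-transitive group $G$, the classification tells us that the socle of $G$ is $\psl(2,q)$, $\mrm{Alt}(n)$, or one of the $3$-transitive Mathieu groups. Each of these socles is itself a $3$-transitive group already known to satisfy the strict-EKR property: $\psl(2,q)$ by \cite{LPSX2018}, $\mrm{Alt}(n)$ by \cite{AM2014}, and the Mathieu groups by the explicit computations in \cite{AM2015}. To promote this to the possibly larger overgroup $G$, I would invoke Theorem~7.2 of \cite{AM2014}, which lets one deduce the strict-EKR property of a $3$-transitive group from that of a $3$-transitive subgroup; applying it with the socle as the subgroup yields the claim for every almost simple $3$-transitive $G$.

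For the affine case, the same classification shows that a $3$-transitive affine group is isomorphic either to $\agl(n,2)$ in its natural action on the $2^{n}$ vectors of $\bb{F}_2^{n}$, or to the single group $\mrm{Alt}(7) \ltimes \bb{F}_2^{4}$. The first possibility is exactly the content of Theorem~\ref{thm:agln2sker}. The second is a fixed finite group, so a direct machine computation of its maximum intersecting sets confirms that all of them are canonical. Having exhausted every branch of the classification, the corollary follows. The only ingredient not already available in the literature or reachable by a finite computation is the infinite family $\agl(n,2)$, so the entire difficulty of Corollary~\ref{cor:3t} is concentrated in Theorem~\ref{thm:agln2sker}; the remainder is bookkeeping against the classification, and I do not anticipate any obstacle there.
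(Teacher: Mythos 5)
Your proposal is correct and follows essentially the same route as the paper: reduce via the classification of $3$-transitive groups to the almost simple case (socle $\psl(2,q)$, $\mrm{Alt}(n)$, or a Mathieu group, lifted to the full group by Theorem~7.2 of \cite{AM2014}) and the affine case ($\agl(n,2)$ by Theorem~\ref{thm:agln2sker}, and $\mrm{Alt}(7)\ltimes\bb{F}_{2}^{4}$ by computer). One minor slip: $\psl(2,q)$ is not $3$-transitive when $q$ is odd, but this is harmless, since the lifting result only requires the socle to be a suitable (strict-EKR) subgroup, exactly as the paper uses it.
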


In \cite{HM67}, Hilton and Milner gave the following stronger version of the classical EKR theorem.

\begin{theorem}[Hilton--Milner Theorem \cite{HM67}]
Suppose that $n>2k\geq 4$ and $\mcal{F}\subset \binom{[n]}{k}$ is an intersecting family with $|\mcal{F}| > \binom{n-1}{k-1}- \binom{n-k-1}{k-1}+1$, then $\mcal{F}$ must be a subset of a canonical intersecting family.     
\end{theorem}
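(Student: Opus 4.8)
This is the classical theorem of Hilton and Milner; the plan is to reprove it by the compression method, in the spirit of Frankl. It suffices to show that if $\mcal F\subseteq\binom{[n]}{k}$ is intersecting, not contained in any canonical intersecting family (equivalently $\bigcap_{F\in\mcal F}F=\emptyset$), and $|\mcal F|>\binom{n-1}{k-1}-\binom{n-k-1}{k-1}+1$, then a contradiction results. The engine is the family of left-compressions $S_{ij}$ ($i<j$), each of which preserves $|\mcal F|$ and the intersecting property; by iterating them I would arrange that $\mcal F$ is stable (invariant under every $S_{ij}$). A real subtlety arises here, since a compression can carry a non-star intersecting family to a star: the first point that needs care is to perform the compressions so that the family stays non-star --- one route is to apply only a shift that keeps the family non-star at each stage, another is to observe that if a star does result then $|\mcal F|$ can be bounded directly, using the estimate $\binom{n-1}{k-1}-\binom{n-k-1}{k-1}$ for the members lying in that star together with the uniqueness clause of the Erd\H{o}s--Ko--Rado theorem. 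Granting this, I may assume $\mcal F$ is stable, intersecting, and not a star.

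For a stable family the vertex degrees $d_i:=|\{F\in\mcal F: i\in F\}|$ satisfy $d_1\ge d_2\ge\cdots\ge d_n$, so the point $1$ has maximum degree; write $\mcal F=\mcal F_1\sqcup\mcal F'$ for the members through $1$ and those avoiding $1$. Since $\mcal F$ is not a star, $\mcal F'\ne\emptyset$; fixing $B\in\mcal F'$, every member of $\mcal F_1$ meets the $k$-set $B$, whence $|\mcal F_1|\le\binom{n-1}{k-1}-\binom{n-k-1}{k-1}$. The remainder is about bounding $\mcal F'$ and trading its size against that of $\mcal F_1$. The structure to exploit is: $\mcal F'$ is itself intersecting; $\mcal F'$ and the link $\mcal L:=\{F\setminus\{1\}: F\in\mcal F_1\}\subseteq\binom{[2,n]}{k-1}$ are cross-intersecting; both $\mcal L$ and $\mcal F'$ inherit stability over the ground set $[2,n]$; $\mcal L$ is not contained in a star (otherwise a short argument using $n>2k$ and the maximality of $d_1$ forces $\mcal F'$, and hence $\mcal F$, to be a star); and $|\mcal F|=|\mcal L|+|\mcal F'|$.

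The core of the proof is then the following cross-intersecting estimate, which I would establish by induction on $k$ (the case $k=2$ being immediate): if $m>2k-1$, $\mcal A\subseteq\binom{[m]}{k}$ is a nonempty stable intersecting family, $\mcal B\subseteq\binom{[m]}{k-1}$ is stable and not contained in a star, and $\mcal A$ and $\mcal B$ are cross-intersecting, then $|\mcal A|+|\mcal B|\le\binom{m}{k-1}-\binom{m-k}{k-1}+1$. Applied with $m=n-1$, $\mcal A=\mcal F'$ and $\mcal B=\mcal L$, this gives exactly $|\mcal F|=|\mcal L|+|\mcal F'|\le\binom{n-1}{k-1}-\binom{n-k-1}{k-1}+1$, the desired contradiction. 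To prove the estimate one splits $\mcal A$ and $\mcal B$ according to whether a member contains the point $2$, uses stability to recognize the resulting links as smaller configurations of the same shape, and feeds them into the inductive hypothesis together with the one-dimension-lower Erd\H{o}s--Ko--Rado and Hilton--Milner bounds; the binomial-coefficient bookkeeping is routine. The step I expect to be the main obstacle is organising this case analysis so the bound remains sharp --- in particular, correctly tracking when the family playing the role of $\mcal B$ is still ``not a star'' and when it has collapsed to one, since those two regimes obey different estimates. (A shorter but less self-contained route is to deduce the theorem from the Kruskal--Katona theorem together with one of its stability refinements.)
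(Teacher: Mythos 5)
The paper offers no proof of this statement: it is the classical Hilton--Milner theorem, quoted from \cite{HM67} only to motivate the stability results for $\agl(n,2)$, so there is no in-paper argument to compare yours against. Taken on its own terms, your outline is a recognizable sketch of the standard shifting proof: compress to a stable family, split off the maximum-degree point $1$ into $\mcal{F}_1$ and $\mcal{F}'$, and finish with a cross-intersecting inequality for the link $\mcal{L}$ and $\mcal{F}'$. The pieces you do argue are fine (e.g.\ that a star-shaped link would force $\mcal{F}$ itself to be a star follows from stability and $d_1\ge d_2$ as you say), and the target inequality $|\mcal{L}|+|\mcal{F}'|\le\binom{n-1}{k-1}-\binom{n-k-1}{k-1}+1$ is exactly what is needed.

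However, the two places you flag as ``subtleties'' are genuine gaps, and one of your proposed fixes fails as stated. If a compression turns the non-star family into a star, all you can conclude is $|\mcal{F}|\le\binom{n-1}{k-1}$, which does not contradict the hypothesis $|\mcal{F}|>\binom{n-1}{k-1}-\binom{n-k-1}{k-1}+1$; the uniqueness clause of EKR gives nothing unless $|\mcal{F}|$ equals $\binom{n-1}{k-1}$ exactly, so the ``bound it directly'' escape route is vacuous. You therefore need an actual argument that the shifts can be scheduled so that stability is reached while non-star-ness (or a configuration witnessing it) survives --- this is precisely the delicate lemma in the literature, not a remark. Second, the cross-intersecting estimate you isolate is essentially equivalent in difficulty to Hilton--Milner itself: it is true (it is a Hilton--Milner/Frankl--Tokushige-type inequality, and note it also attains equality at the configuration in which every member of $\mcal{A}$ contains both $1$ and $2$ and $\mcal{B}$ consists of all $(k-1)$-sets meeting $\{1,2\}$, so your induction must carry this second extremal case through the case analysis), but calling its inductive proof ``routine bookkeeping'' leaves the real content of the theorem unproved. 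To make the argument complete, either carry out that induction in full detail, or take the route you mention parenthetically and deduce the theorem from the Kruskal--Katona theorem, which avoids the compression-collapse issue altogether.
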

In  \cite{ellis2012proof}, Ellis proved a Hilton--Milner analogue for intersecting sets in $\Sym(n)$, which was conjectured by Cameron--Ku in \cite{CK2003}. An alternative proof was given in \cite{ellis2015quasi}.   
\begin{theorem}(Corollary~3.5 of \cite{ellis2012proof})\label{thm:snstablity}
There exists a constant $c_{0}<1$ such that any intersecting set $S\subseteq \Sym(n)$ of size at least $c_{0}(n-1)!$ must be a subset of a canonical intersecting set in $\Sym(n)$.
\end{theorem}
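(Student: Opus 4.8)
The plan is to combine a spectral stability analysis of the derangement graph of $\Sym(n)$ with a combinatorial cleanup step. Let $\Gamma_{n}$ be the derangement graph on vertex set $\Sym(n)$, with $\sigma\sim\tau$ iff $\sigma\tau^{-1}$ fixes no point, so that intersecting sets are exactly the independent sets of $\Gamma_{n}$. Since $\Gamma_{n}$ is a normal Cayley graph, its eigenspaces are the isotypic components of the irreducible representations of $\Sym(n)$; by the eigenvalue estimates of Renteln and of Ku--Wales the least eigenvalue is $-D_{n}/(n-1)$, where $D_{n}$ is the number of derangements of $[n]$, attained precisely on the $\chi^{(n-1,1)}$-isotypic component, while the largest eigenvalue $D_{n}$ sits on the trivial component. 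Hoffman's ratio bound then recovers $|S|\le n!/((n-1)+1)=(n-1)!$, and we note that the span $U_{\Sym(n)}$ of the canonical indicators $\ind_{\I_{i\to j}}$ equals the sum of the trivial and $\chi^{(n-1,1)}$ components --- equivalently, the $(-D_{n}/(n-1))$-eigenspace together with the constants.

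Next I would establish a quantitative stability version of the ratio bound. Write $|S|=\alpha(n-1)!$ and split $\ind_{S}$ into its trivial part (the constant $\alpha/n$), a part $f_{1}$ in the $\chi^{(n-1,1)}$-isotypic component, and a remainder $g$ orthogonal to $U_{\Sym(n)}$. The identity $\langle\ind_{S},A\ind_{S}\rangle=0$ (valid since $S$ is independent, $A$ being the adjacency operator of $\Gamma_{n}$), expanded over eigenspaces and combined with the spectral gap between $-D_{n}/(n-1)$ and the next-smallest eigenvalue, forces $\|g\|_{2}^{2}=O\!\big(\tfrac{1-\alpha}{n}+\tfrac{1}{n^{2}}\big)$, whereas $\|\ind_{S}\|_{2}^{2}=\alpha/n$; that is, $\ind_{S}$ is $L^{2}$-close to $U_{\Sym(n)}$ with relative error $O\big((1-\alpha)+\tfrac1n\big)$. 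Since only the qualitative conclusion of the theorem is needed, this crude estimate is enough.

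The substantial step is to promote closeness into exact containment. First I would upgrade ``$\ind_{S}$ close to $U_{\Sym(n)}$'' to ``$S$ is $(1-\epsilon)$-contained in a single canonical set'': the projection $\phi$ of $\ind_{S}$ onto $U_{\Sym(n)}$ is a function of the shape $\sigma\mapsto\sum_{i}M_{i,\sigma(i)}$ whose squared $L^{2}$-norm is of order $1/n$ --- far larger than for a function of $U_{\Sym(n)}$ with the same mean but spread-out coefficients --- which forces a suitably normalized coefficient matrix $M$ to be concentrated, up to lower-order terms, on one entry $(i_{0},j_{0})$; a Boolean approximant built from two or more distinct cosets would have measure at least $2/n$, contradicting $|S|\le(n-1)!$, so $\phi\approx\ind_{\I_{i_{0}\to j_{0}}}$ and hence $|S\triangle\I_{i_{0}\to j_{0}}|=\epsilon(n-1)!$ with $\epsilon=O\big((1-\alpha)+\tfrac1n\big)$. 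Second comes a robust cleanup that genuinely uses intersection: if some $\sigma\in S$ had $\sigma(i_{0})\ne j_{0}$, then by inclusion--exclusion the number of $\tau\in\I_{i_{0}\to j_{0}}$ with $\tau(k)\ne\sigma(k)$ for every $k$ equals $(1/e+o(1))(n-1)!$, a positive proportion of $\I_{i_{0}\to j_{0}}$, and all of these lie outside $S$; so $|S\cap\I_{i_{0}\to j_{0}}|\le(1-1/e+o(1))(n-1)!$, and with the $\epsilon$-containment this gives $|S|\le(1-1/e+\epsilon+o(1))(n-1)!<c_{0}(n-1)!$ once $c_{0}$ is taken close enough to $1$ --- a contradiction. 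Hence $S\subseteq\I_{i_{0}\to j_{0}}$. Fixing $c_{0}<1$ large enough to meet all these thresholds for $n$ large, and checking the finitely many remaining small $n$ directly, finishes the proof.

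The main obstacle is the first half of the last step: converting an $L^{2}$-estimate into honest combinatorial localization. The bound on $\|g\|_{2}$ controls only a global average and is a priori consistent with $S$ being smeared over several cosets or carrying a non-negligible ``junk'' part; extracting a single coordinate $i_{0}$ on which membership in $S$ essentially forces $\sigma(i_{0})=j_{0}$ is a noise-stability/junta statement for Boolean functions on $\Sym(n)$ that are close to the degree-$1$ space, whose proof --- via hypercontractivity on $\Sym(n)$ in the style of Ellis--Friedgut--Pilpel, or via the quasirandomness framework of \cite{ellis2015quasi} --- is the technical heart. A secondary difficulty is uniformity: the spectral-gap constant, the junta bound, and the $o(1)$ error terms in the cleanup must all be controlled independently of $n$ so that a single $c_{0}<1$ works for every $n$.
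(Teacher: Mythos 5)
You should first be aware that the paper contains no proof of this statement: Theorem~\ref{thm:snstablity} is imported verbatim from Corollary~3.5 of \cite{ellis2012proof}, so the only in-paper point of comparison is the proof of the $\agl(n,2)$ analogue, Theorem~\ref{thm:agln2stablity}, which runs along exactly the route you sketch --- Ellis's stability form of the ratio bound (Theorem~\ref{thm:stabilityratiobound}), a black-box result converting $L^{2}$-closeness to $U_{G}$ into small symmetric difference with a single canonical coset (Theorem~\ref{thm:EFF}), and a derangement-counting cleanup (Corollary~\ref{cor:ratiobound}). So your outline matches both Ellis's strategy and the paper's parallel argument, and your spectral step and final cleanup (the $(1/e+o(1))(n-1)!$ count of elements of $\I_{i_{0}\to j_{0}}$ avoiding a fixed $\sigma$ with $\sigma(i_{0})\neq j_{0}$) are sound in outline. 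One secondary omission: the stability bound needs, beyond the Renteln/Ku--Wales identification of the least eigenvalue, an upper bound such as $|\lambda_{\chi}|=O(D_{n}/n^{2})$ for \emph{every} character other than the trivial and standard ones (including the sign character, for which a generic trace bound like \eqref{eq:eigenbound} is useless and one needs the exact value $\pm(n-1)$); you assert the gap but give no argument for it.

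The genuine gap is the step you flag yourself: upgrading ``$\ind_{S}$ is $L^{2}$-close to $U$'' to ``$S$ is $(1-\epsilon)$-contained in one coset $\I_{i_{0}\to j_{0}}$.'' The heuristic that the projection $\sigma\mapsto\sum_{i}M_{i,\sigma(i)}$ must have its coefficient matrix concentrated on a single entry because its second moment is of order $1/n$ is not a proof: mean and norm alone do not exclude configurations where $M$ is spread over several entries while the Boolean function carries compensating mass outside $U$, and the linear dependencies among first-level functions on $\Sym(n)$ make any naive coefficient-counting degenerate. Making this rigorous is precisely the FKN-type stability theorem for Boolean functions on $\Sym(n)$ that constitutes the technical heart of \cite{ellis2012proof} (via Ellis--Friedgut--Pilpel hypercontractivity) and of \cite{ellis2015quasi} (via quasirandomness); as written, your proposal re-cites the core of the result rather than proving it. If a black box is acceptable, the cleanest repair is the one the paper itself uses for $\agl(n,2)$: since $\Sym(n)$ is $3$-transitive, Plaza's Proposition~10 (Theorem~\ref{thm:EFF}, with degree $n$ in place of $2^{n}$) supplies exactly the missing implication with absolute constants, after which your cleanup (or the paper's Corollary~\ref{cor:ratiobound} argument) finishes uniformly in $n$, the finitely many small $n$ being handled as in Corollary~\ref{cor:affinesymdiff} by taking $c_{0}$ so close to $1$ that only maximum intersecting sets qualify and invoking the strict-EKR property of $\Sym(n)$.
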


Using the methods in \cite{ellis2015quasi}, in \cite{plaza2015stability}, Plaza extended the above result to $\mrm{PGL}(2,q)$ (see \cite[Theorem~2]{plaza2015stability}).

We were able to extend these methods to get the following analogous results for $\agl(n,2)$.
\begin{theorem}\label{thm:agln2stablity}
There exists a constant $c_{0}<1$ such that any intersecting set $S\subseteq \agl(n,2)$ of size at least $c_{0}|\gl(n,2)|$ must be a subset of a canonical intersecting set in $\agl(n,2)$.
\end{theorem}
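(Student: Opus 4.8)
The plan is to adapt the spectral (``quasirandomness'') method of Ellis \cite{ellis2015quasi} and Plaza \cite{plaza2015stability}. Write $G=\agl(n,2)$ acting on $\Omega=\mathbb{F}_2^n$, and let $\Gamma$ be its derangement graph: the Cayley graph on $G$ whose connection set is the set $D$ of derangements. An intersecting set is exactly an independent set of $\Gamma$, which is $d$-regular on $N=|G|=2^n|\gl(n,2)|$ vertices, $d=|D|$. Since $D$ is a union of conjugacy classes, $\Gamma$ is normal and its eigenvalues are $\lambda_\chi=\chi(1)^{-1}\sum_{g\in D}\chi(g)$ for $\chi\in\irr(G)$, with multiplicity $\chi(1)^2$; the trivial character gives $d$, and the standard character $\chi_{\mathrm{std}}$ of degree $2^n-1$ gives $\tau:=\lambda_{\chi_{\mathrm{std}}}=-d/(2^n-1)$, because $\chi_{\mathrm{std}}(g)=\operatorname{fix}(g)-1=-1$ for every $g\in D$. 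By the $2$-transitive EKR theorem \cite{MST2016,MSi2021} (equivalently, by the Delsarte--Hoffman ratio bound once one knows $\tau$ is least), a maximum intersecting set has size $N/2^n=|\gl(n,2)|$, attained by the canonical sets $\I_{\alpha\to\beta}$. The argument rests on a \emph{stability} form of the ratio bound: if $S$ is independent and $\ind_S=f_0+f_\tau+f'$ with $f_0\in U_0$ (constants), $f_\tau\in U_\tau$ (the $\tau$-eigenspace), $f'\perp U_0\oplus U_\tau$, then evaluating $\ind_S^{\mathsf T}A\ind_S=0$ and bounding the ``other'' eigenvalues by $\mu:=\max\{|\lambda_\chi|:\chi\in\irr(G)\setminus\{\mathbf 1,\chi_{\mathrm{std}}\}\}$ gives
\[
\|f'\|_2^2\ \le\ \frac{|\tau|}{|\tau|-\mu}\Bigl(1-\tfrac{|S|}{|\gl(n,2)|}\Bigr)|S|.
\]
So two spectral facts are needed: (a) $\tau$ is the least eigenvalue of $\Gamma$ and is attained only by $\chi_{\mathrm{std}}$, so that $U_0\oplus U_\tau$ is exactly the module $U_G$ spanned by the canonical indicators; and (b) a \emph{uniform} spectral gap $\mu\le(1-\eta)|\tau|$ for an absolute constant $\eta>0$.

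Proving (b) — and the ``only $\chi_{\mathrm{std}}$'' half of (a) — is the heart of the matter and the step I expect to be the main obstacle. I would use the structure $G=\mathbb{F}_2^n\rtimes\gl(n,2)$ together with Clifford theory. The irreducibles of $G$ lying over the trivial character of $\mathbb{F}_2^n$ are inflations of $\bar\chi\in\irr(\gl(n,2))$, and since $(b,A)\in D\iff b\notin\operatorname{im}(A-I)$ and $\sum_A\bar\chi(A)=0$ one gets
\[
\lambda_\chi\ =\ -\frac{1}{\bar\chi(1)}\sum_{A\in\gl(n,2)}\lvert\operatorname{im}(A-I)\rvert\,\bar\chi(A);
\]
the remaining irreducibles lie over a nontrivial character $\psi$ of $\mathbb{F}_2^n$, are induced from $\mathbb{F}_2^n\rtimes\mathrm{Stab}_{\gl(n,2)}(\psi)$ with $\mathrm{Stab}_{\gl(n,2)}(\psi)$ a maximal parabolic, and should admit a Mackey/inductive reduction to $\gl(n-1,2)$. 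In each case the goal is to beat the trivial estimate and show the weighted character sum is at most $(1-\eta)\tfrac{d}{2^n-1}\chi(1)$. I expect Cauchy--Schwarz against $\sum_A\lvert\operatorname{im}(A-I)\rvert^2$ to suffice for all $\bar\chi$ of degree $\gtrsim 2^n$, while the finitely many low-degree irreducibles of $\gl(n,2)$ (whose values are explicit, e.g.\ via the permutation character on $\mathbb{F}_2^n$) must be treated by hand — indeed for the ``next-to-standard'' character the sum collapses and the required inequality reduces to $d>\tfrac13\cdot 2^n|\gl(n,2)|$, a statement about the proportion of derangements that holds for every $n$ and can be checked. A clean auxiliary counting fact, used again in the finish, I would isolate first: \emph{for every $(\alpha,\beta)\in\Omega^2$ with $\alpha\neq\beta$, the canonical set $\I_{\alpha\to\beta}$ contains exactly $d/(2^n-1)$ derangements}, hence a proportion $\ge\eta'>0$ of them for an absolute $\eta'$; this follows by double counting (each derangement lies in exactly $2^n$ such cosets, there are $2^n(2^n-1)$ of them), the proportion $d/((2^n-1)|\gl(n,2)|)$ being the fraction of derangements in $G$, which is $\ge\tfrac12\Pr_A[\dim\ker(A-I)\ge1]$ and thus bounded below uniformly in $n$.

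With (a) and (b) in hand, the combinatorial finish follows the template of \cite{ellis2015quasi,plaza2015stability}. Let $S\subseteq G$ be intersecting with $|S|\ge c_0|\gl(n,2)|$, $c_0=1-\epsilon$. The stability ratio bound gives $\|f'\|_2^2\le(\epsilon/\eta)|S|$, i.e.\ $\ind_S$ lies within relative $\ell^2$-distance $\sqrt{\epsilon/\eta}$ of $U_G$, the span of the canonical indicators. A Boolean function of density $\approx 2^{-n}$ this close to $U_G$ must — by the $\agl(n,2)$-analogue of the junta/dictatorship stability lemma for near-Boolean ``degree-one'' functions $g\mapsto\sum_\alpha\phi_\alpha(g(\alpha))$ — concentrate: there are $\alpha_0,\beta_0$ with $|S\cap\I_{\alpha_0\to\beta_0}|\ge(1-\epsilon')|S|$, where $\epsilon'\to0$ as $\epsilon\to0$. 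Finally I bootstrap: if some $h\in S$ had $h(\alpha_0)\neq\beta_0$, set $\alpha_1:=h^{-1}(\beta_0)\neq\alpha_0$; then $g\mapsto h^{-1}g$ is a bijection $\I_{\alpha_0\to\beta_0}\to\I_{\alpha_0\to\alpha_1}$ carrying ``$g$ does not intersect $h$'' to ``$h^{-1}g$ is a derangement'', so the number of $g\in\I_{\alpha_0\to\beta_0}$ not intersecting $h$ equals the number of derangements in $\I_{\alpha_0\to\alpha_1}$, which is $\ge\eta'|\gl(n,2)|$ by the auxiliary fact; hence fewer than $(1-\eta')|\gl(n,2)|$ elements of $\I_{\alpha_0\to\beta_0}$ intersect $h$. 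Choosing $\epsilon$ (hence $c_0<1$) small enough that $(1-\epsilon')c_0>1-\eta'$, this contradicts $S$ being intersecting together with $|S\cap\I_{\alpha_0\to\beta_0}|\ge(1-\epsilon')c_0|\gl(n,2)|>(1-\eta')|\gl(n,2)|$. Therefore $S\subseteq\I_{\alpha_0\to\beta_0}$, a canonical intersecting set. Fixing such an $\epsilon$ (valid for all large $n$) and disposing of the finitely many small $n$ directly — e.g.\ via Theorem~\ref{thm:agln2sker} and a computer check, as for $\mathrm{Alt}(7)\ltimes\mathbb{F}_2^4$ — completes the proof.
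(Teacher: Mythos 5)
Your proposal follows the same overall route as the paper: pass to the derangement graph, apply Ellis's stability form of the ratio bound (Theorem~\ref{thm:stabilityratiobound}) to force $\ind_{S}$ close to $U_{\one}\oplus U_{\psi}$, invoke the Ellis--Filmus/Plaza characterization of Boolean functions near that module (your ``$\agl(n,2)$-analogue of the junta lemma'' is exactly Plaza's Proposition~10, i.e.\ Theorem~\ref{thm:EFF}, which applies verbatim to $3$-transitive groups), and then bootstrap a near-coset intersecting set into the coset. Your finish is a legitimate variant of the paper's: counting derangements inside a canonical coset is the same quantity the paper extracts from the equality case of the ratio bound (Corollary~\ref{cor:ratiobound}); note only that the \emph{exact} count $\de_{G}/(2^{n}-1)$ in every $\I_{\alpha\to\beta}$ with $\alpha\neq\beta$ follows from $2$-transitivity (conjugation permutes these cosets transitively), not from the averaging/double-count alone.

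The substantive gap is the one you flag yourself: step (b), and the ``only $\chi_{\mathrm{std}}$'' half of (a), are sketched but not proved, and closing them needs an input you never name, namely the complete list of low-degree irreducible characters. The paper imports Tiep's minimal-degree theorem for $\gl(n,2)$ \cite{tiep1996minimal} and propagates it recursively to $\agl(n,2)$ (Corollary~\ref{cor:chardegaffine}) to know that \emph{every} character other than $\one,\theta,\psi$ -- including all those induced from nontrivial additive characters, where your ``Mackey/inductive reduction'' is only a gesture -- has degree at least roughly $2^{n-1}\psi(1)/3$; combined with the trace bound $|\lambda_{\chi}|\leq \de_{G}/(\chi(1)\sqrt{\pr_{G}})$ (which is exactly your Cauchy--Schwarz estimate) and Spiga's uniform bound $\pr_{G}\geq 3/8$ \cite{spiga2017number}, this gives the gap. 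Your by-hand treatment of the next-to-standard character is correct and matches the paper's $\theta$: the sum collapses to $\lambda_{\theta}=(|G|-2\de_{G})/(2^{n}-2)>0$, and your condition $\de_{G}>\tfrac13|G|$ does hold since $\pr_{G}\geq 3/8$. One caution there: defining $\mu$ as the maximum of $|\lambda_{\chi}|$ over all other characters asks for more than Ellis's inequality needs (only the most negative remaining eigenvalue enters; positive eigenvalues such as $\lambda_{\theta}$, which is of constant order relative to $|\tau|$, are automatically harmless), and the paper exploits this by proving merely $\lambda_{\theta}>0$ rather than an upper bound. So your architecture is the paper's; to make it a proof you must cite (or reprove) the low-degree classification and the derangement-proportion bound, which is precisely what the paper does.
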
  

 We now describe the structure of the paper. We start with \S~\ref{sec:notationandbackground}, in which we establish some notation and mention a few useful general results. Once this is done, we prove Theorem~\ref{thm:agln2sker} in \S~\ref{sec:strict}. We prove it by assuming the veracity of a technical result, Lemma~\ref{lem:charactersums}. We prove this technical result in \S~\ref{sec:charactersums}. The proof of Theorem~\ref{thm:agln2stablity} is done in \S~\ref{sec:stability}. 

\section{Notation and Background.}\label{sec:notationandbackground}

In this section, we establish some notation and collect some useful results from character theory and algebraic graph theory. We first start by describing the group $\agl(n,q)$.

\subsection{The group $\agl(n,q)$}
Let $q$ be a power of a prime, and let $\bb{F}_{q}$ denote the field with $q$ elements. Let $V$ denote the $n$-dimensional vector space $\bb{F}^{n}_{q}$ of $n$-columns with entries in $\bb{F}_{q}$. By $(\mbf{e}_{1},\ \ldots,\ \mbf{e}_{n})$, we denote the standard ordered basis of $V$.
By the group $\agl(n,2)$, we mean the semidirect product $\gl(n,q) \ltimes V$. Elements of $\agl(n,2)$ are ordered pairs of the form $(M,\ v)$ with $M \in \gl(n,q)$ and $v \in V$. Given $(M_{1},\ v_{1})$ and $(M_{2},\ v_{2})$ in $\agl(n,q)$, we have $(M_{1},v_{1})\cdot (M_{2},\ v_{2})=(M_{1}M_{2}, v_{1}+M_{1}v_{2})$. The natural action of $\agl(n,q)$ on $V$ satisfies 
\[(M,\ v)\cdot w =v+Mw,\] for all $(M,\ v) \in \agl(n,q)$ and $w \in V$. This action is $2$-transitive for all $q$ and $3$-transitive when $q=2$. 

\subsection{The Module Method.}\label{sec:mm}
We now describe the Module Method: a sufficient condition for a $2$-transitive group to satisfy the strict-EKR property. This was discovered by Ahmedi and Meagher in \cite{AM2015}. In the same paper, they use this result to prove strict-EKR property of some $2$-transitive groups. This module method was also used in \cite{MS2011} and \cite{LPSX2018}.
Let $G \leq \sym(n)$ be a $2$-transitive permutation group. By a \emph{derangement} in $G$, we mean a fixed-point-free permutation in $G$. Let $\Der(G)$ be the set of derangements in $G$. By $[n]^{(2)}$, we denote the set of ordered pairs of distinct elements in $[n]:=\{1,2,\ldots, n\}$. We define the \emph{derangement matrix} $M(G)$ of $G$ to be the matrix indexed by $\Der(G) \times [n]^{(2)}$ with \[M(G)_{d,\ (a,b))} =\begin{cases} 1 & \text{if $d(a)=b$ and}\\
0 & \text{otherwise.}
\end{cases}
\]  
The following result--known as the Module Method--is essentially Theorem~4.5 of \cite{AM2015}. The main result of \cite{MSi2021} and \cite{MST2016} show that conditions (a) and (b) of Theorem~4.5 of \cite{AM2015} are true for all $2$-transitive groups. Taking these results into account, we take the liberty of restating the Module Method as follows:

\begin{theorem}\label{thm:modulemethod}
 Let $G \leq \sym(n)$ be a $2$-transitive permutation group and let $M(G)$ be its derangement matrix. If $\mrm{rank}(M(G))=(n-1)(n-2)$, then $G$ satisfies the strict-EKR property.   
\end{theorem}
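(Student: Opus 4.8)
The plan is to show that, under the rank hypothesis, the indicator function of \emph{any} maximum intersecting set $S$ is forced to be the indicator of a coset of a point stabiliser. Replacing $S$ by $g_{0}^{-1}S$ for some $g_{0}\in S$ — an operation that preserves both ``being an intersecting set'' and ``being canonical'', since it conjugates the differences $g_{1}g_{2}^{-1}$ and hence preserves the fixed-point property — we may assume $1\in S$. Then $S$ contains no derangement, so $\ind_{S}$ vanishes on $\Der(G)$. Since every $2$-transitive group satisfies the EKR-module property \cite{MSi2021,MST2016}, we have $\ind_{S}\in U_{G}$; using the identities $\sum_{\beta}\ind_{\I_{\alpha\to\beta}}=\mathbf{1}$ (the all-ones function on $G$) to eliminate the diagonal terms, we may write $\ind_{S}=\lambda\,\mathbf{1}+\sum_{(\alpha,\beta)\in[n]^{(2)}}b_{\alpha\beta}\,\ind_{\I_{\alpha\to\beta}}$ for a scalar $\lambda$ and coefficients $b_{\alpha\beta}$.

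The next step is to feed this identity to the derangements. For $\alpha\neq\beta$ one has $\ind_{\I_{\alpha\to\beta}}(d)=M(G)_{d,(\alpha,\beta)}$, and $\ind_{S}(d)=0$, so $M(G)\,b=-\lambda\,\mathbf{1}$. Since each derangement has exactly $n$ ``transitions'', $M(G)$ sends the constant vector $\tfrac1n\mathbf{1}$ (indexed by $[n]^{(2)}$) to the all-ones vector, and hence $b+\tfrac{\lambda}{n}\mathbf{1}\in\ker M(G)$. This is where the hypothesis enters: one checks directly that for every $2$-transitive group of degree $n\geq 3$ the kernel of $M(G)$ contains the subspace $\{(f(\alpha)+g(\beta))_{\alpha\neq\beta}\colon f,g\colon[n]\to\bb{C}\}$, which has dimension $2(n-1)$; as $M(G)$ has $n(n-1)$ columns, the assumption $\mathrm{rank}(M(G))=(n-1)(n-2)$ forces $\ker M(G)$ to equal this subspace exactly. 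Therefore $b_{\alpha\beta}=f(\alpha)+g(\beta)$ for some $f,g\colon[n]\to\bb{C}$.

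Substituting this back and simplifying with $\sum_{\beta\neq\alpha}\ind_{\I_{\alpha\to\beta}}(h)=1-[h(\alpha)=\alpha]$ and the analogous identity summing over the first coordinate, all of the canonical indicators collapse and one is left with the normal form $\ind_{S}(h)=\mu-\sum_{\alpha\in\mathrm{Fix}(h)}\phi(\alpha)$, where $\phi:=f+g$ and $\mu$ is a constant. Evaluating at a derangement (one exists by Jordan's theorem) gives $\mu=0$, and evaluating at the identity gives $\sum_{\alpha}\phi(\alpha)=-1$. Now $2$-transitivity is used a second time: for each point $\gamma$, the stabiliser $G_{\gamma}$ acts transitively on the remaining $n-1\geq 2$ points, so it contains a permutation that is a derangement there; that is, there is $h_{\gamma}\in G$ with $\mathrm{Fix}(h_{\gamma})=\{\gamma\}$, whence $\phi(\gamma)=-\ind_{S}(h_{\gamma})\in\{0,-1\}$. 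Combined with $\sum_{\alpha}\phi(\alpha)=-1$, this forces $\phi$ to take the value $-1$ at exactly one point $\gamma_{0}$ and $0$ elsewhere, so $\ind_{S}(h)=[h(\gamma_{0})=\gamma_{0}]$, i.e.\ $S=G_{\gamma_{0}}$; undoing the initial translation shows the original $S$ is canonical.

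I expect the one genuinely delicate part to be the middle step: correctly identifying $\ker M(G)$ from the rank hypothesis and then handling the bookkeeping with the (linearly dependent) canonical indicators so as to reach the fixed-point-sum normal form $\ind_{S}(h)=\mu-\sum_{\alpha\in\mathrm{Fix}(h)}\phi(\alpha)$. Once that form is available the endgame is short, using only the elementary fact that a transitive group of degree at least $2$ has a derangement. (Throughout we assume $n\geq 3$; for smaller $n$ the statement is vacuous or immediate.)
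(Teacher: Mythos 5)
Your argument is, in substance, the module-method proof of Ahmadi and Meagher: the paper itself offers no proof of this statement, but simply restates Theorem~4.5 of \cite{AM2015} in the light of the EKR-module property established for all $2$-transitive groups in \cite{MSi2021} and \cite{MST2016}, which is exactly the input you invoke. Your reduction to $1\in S$, the vanishing of $\ind_{S}$ on $\Der(G)$, the elimination of the diagonal indicators via $\sum_{\beta}\ind_{\I_{\alpha\to\beta}}=\mathbf{1}$, the identity $M(G)b=-\lambda\mathbf{1}$, the normal form $\ind_{S}(h)=\mu-\sum_{\alpha\in\mathrm{Fix}(h)}\phi(\alpha)$, and the endgame using elements with exactly one fixed point (which exist by $2$-transitivity together with Jordan's theorem applied to $G_{\gamma}$ on the remaining $n-1$ points) are all correct.

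The one step that is misstated is the identification of $\ker M(G)$. The space $W=\{(f(\alpha)+g(\beta))_{\alpha\neq\beta}\colon f,g\colon[n]\to\bb{C}\}$ has dimension $2n-1$, not $2(n-1)$, and it is \emph{not} contained in the kernel: for $v_{(\alpha,\beta)}=f(\alpha)+g(\beta)$ and a derangement $d$ one gets $(M(G)v)_{d}=\sum_{\alpha}f(\alpha)+\sum_{\beta}g(\beta)$, so $v\in\ker M(G)$ precisely when $\sum f+\sum g=0$. (Indeed, if all of $W$ lay in the kernel, the rank would be at most $n(n-1)-(2n-1)<(n-1)(n-2)$, contradicting your own hypothesis.) The repair is immediate: the subspace $W_{0}=\{(f(\alpha)+g(\beta))_{\alpha\neq\beta}\colon \sum f+\sum g=0\}$ does lie in $\ker M(G)$, has dimension exactly $2(n-1)=n(n-1)-(n-1)(n-2)$, and is spanned by the analogues of the vectors $l_{(a,b)},r_{(a,b)}$ of \eqref{eq:kernelspaces} that the paper itself uses in \S~\ref{sec:strict} to bound the rank from above; the rank hypothesis then forces $\ker M(G)=W_{0}$. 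Since $b+\tfrac{\lambda}{n}\mathbf{1}\in W_{0}$ still yields $b_{\alpha\beta}=f(\alpha)+g(\beta)$ after absorbing the constant into $f$, and the zero-sum constraint is never needed afterwards, the remainder of your proof goes through unchanged.
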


Given a group $G$, by $\bb{C}[G]$, we denote its group algebra. Let $\Omega$ be a $G$-set. Let $\bb{C}[\Omega]$ be the $|\Omega|$-dimensional $\bb{C}$-vector space generated by $\{\ups_{\omega} :\ \omega \in \Omega\}$ as a basis. The action of $G$ on $\Omega$ leads to a realization of $\bb{C}[\Omega]$ as a $G$-module. Given $S \subseteq \Omega$, we use $\ups_{S}$ to denote $\sum\limits_{\omega \in S} \ups_{\omega}$. The group $G$ acts on both $[n]^{(2)}$ and on $\Der(G)$ (via conjugation). The matrix $M(G)$ can be considered as a matrix representation of some  $\mfk{M} \in \mrm{Hom}_{\bb{C}[G]}\left(\bb{C}[[n]^{(2)}] ,\ \bb{C}[\Der(G)] \right)$. If $U$ is an irreducible submodule of $\bb{C}[[n]^{(2)}]$, then by Schur's lemma, either $\mfk{M}(U) \cong U$ or $\mfk{M}(U)\cong \{0\}$. So, characterizing irreducible submodules which are ``annihilated'' by $M(G)$ will give us the rank of $M(G)$. We prove Theorem~\ref{thm:agln2sker} using the module method.     
\subsection{Delsarte--Hoffman ratio bound and stability.}\label{sec:stabilitynotation}
The investigation of intersecting sets in a permutation group can be transferred to that of independent sets in a certain Cayley graph associated with the permutation group, called the derangement graph. Given a permutation group $G \leq \sym(n)$ with $\Der(G)$ as the set of derangements in $G$, the derangement graph $\Gamma_{G}$ is the Cayley graph on $G$ with $\Der(G)$ as its ``connection'' set.
It is elementary to observe that a set $\I \subset G$ is intersecting if and only if $S$ is an independent set in $\Gamma_{G}$. This identification of intersecting sets with independent sets empowers us to use spectral graph theoretic results. This methodology has proved useful in characterization of intersecting sets in many groups, for instance, see \cite{CK2003}, \cite{GM2009}, and \cite{MS2011}.   

We now describe a well-known spectral bound on independence number of regular graphs. We first establish some notation. Given a graph $\Gamma=(V,E)$, by $\bb{C}^{V}$, we denote the set of $\bb{C}$-valued functions on the vertex set $V$. The adjacency matrix of $\Gamma$, can be considered as a linear map in $\mrm{End}_{\bb{C}}\left(\bb{C}^{V} \right)$. By an eigenvalue of $\Gamma$, we mean an eigenvalue of its adjacency matrix. Given an eigenvalue $\zeta$ of $\Gamma$, by $V_{\zeta}$, we denote the $\zeta$-eigenspace in $\bb{C}^{V}$. Given a subset $S\subseteq V$, by $\ind_{S}$, we denote the indicator function of $S$ as a subset of $V$. The following result--known as the Delsarte--Hoffman ratio bound--gives an upper bound on the independence number of a regular graph. The statement we provide is \cite[Theorem~2.4.2]{GMbook}.
\begin{theorem}\label{thm:ratiobound}
 Let $\Gamma=(V, E)$ be a $k$ regular graph on $v$ vertices. If $\lambda$ is the least eigenvalue of $\Gamma$, then for any independent set $S$, we have 
 \[|S| \leq \dfrac{v}{1- \dfrac{k}{\lambda}},\] and if equality holds, then 
 \[\ind_{S} \in V_{k}+V_{\lambda}.\]
\end{theorem}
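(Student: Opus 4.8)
The plan is to run the standard quadratic-form argument on the adjacency matrix of $\Gamma$. Write $A$ for the adjacency matrix of $\Gamma$, regarded as a self-adjoint operator on $\bb{C}^{V}$ with the standard inner product $\langle\cdot,\cdot\rangle$. Since $\Gamma$ is $k$-regular, the all-ones function $\ind_{V}$ satisfies $A\ind_{V}=k\ind_{V}$, so $k$ is an eigenvalue; it is in fact the largest eigenvalue (every row sum equals $k$), and, provided $\Gamma$ has at least one edge, $\mathrm{tr}(A)=0$ forces the least eigenvalue $\lambda$ to be strictly negative. (If $\Gamma$ is edgeless both sides of the claimed bound degenerate, so I assume $\lambda<0$ from now on.) Fix an orthonormal basis $f_{1},\dots,f_{v}$ of $\bb{C}^{V}$ consisting of eigenvectors of $A$, with eigenvalues $\mu_{1}=k\geq\mu_{2}\geq\cdots\geq\mu_{v}=\lambda$, chosen so that $f_{1}=\ind_{V}/\sqrt{v}$ (this is possible even when the $k$-eigenspace is bigger than $\bb{C}\ind_{V}$).

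Next, let $S$ be an independent set and set $x=\ind_{S}$. Then $\langle x,x\rangle=|S|$ and $\langle x,\ind_{V}\rangle=|S|$, while $\langle x,Ax\rangle=0$ because $S$ spans no edge. Writing $x=\sum_{i}c_{i}f_{i}$ we get $c_{1}=|S|/\sqrt{v}$, $\sum_{i}c_{i}^{2}=|S|$, and
\[
0=\langle x,Ax\rangle=\sum_{i}\mu_{i}c_{i}^{2}\;\geq\;kc_{1}^{2}+\lambda\sum_{i\geq 2}c_{i}^{2}=(k-\lambda)c_{1}^{2}+\lambda|S| .
\]
Since $\lambda<0$, rearranging gives $(k-\lambda)|S|^{2}/v\leq -\lambda|S|$, and dividing by $|S|$ (the case $S=\emptyset$ being trivial) yields $|S|\leq -\lambda v/(k-\lambda)=v/(1-k/\lambda)$, which is the stated bound.

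For the equality case, observe that equality above forces $\sum_{i\geq 2}(\mu_{i}-\lambda)c_{i}^{2}=0$; since every term is non-negative, $c_{i}=0$ for each $i\geq 2$ with $\mu_{i}>\lambda$. Hence $x$ lies in the span of $f_{1}$ together with the $\lambda$-eigenvectors, i.e.\ $\ind_{S}=x\in\bb{C}\ind_{V}\oplus V_{\lambda}$. As $\ind_{V}\in V_{k}$ by $k$-regularity, this gives $\ind_{S}\in V_{k}+V_{\lambda}$, as required.

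The argument is short, so there is no serious obstacle; the only points needing a little care are: verifying that $\lambda<0$ (so that dividing the inequality by $\lambda$ is legitimate and the bound is not vacuous), handling the trivial edgeless/empty-set degeneracies, and in the equality analysis being careful to phrase the conclusion as membership in $V_{k}+V_{\lambda}$ rather than in $\bb{C}\ind_{V}\oplus V_{\lambda}$, which matters when $\Gamma$ is disconnected and $V_{k}$ is strictly larger than $\bb{C}\ind_{V}$.
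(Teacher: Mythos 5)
Your proof is correct. The paper does not prove this statement itself---it is quoted as the Delsarte--Hoffman ratio bound (Theorem~2.4.2 of the cited Godsil--Meagher book)---and your quadratic-form argument on the adjacency matrix is exactly the standard proof of that result, with the degenerate cases ($\lambda<0$, $S=\emptyset$) and the equality analysis $\ind_{S}\in\bb{C}\ind_{V}\oplus V_{\lambda}\subseteq V_{k}+V_{\lambda}$ handled correctly.
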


The following is a useful corollary of the ratio bound.
\begin{cor}[Corollary 2.4.3 of \cite{GMbook}]\label{cor:ratiobound}
 Let $\Gamma=(V, E)$ be a $k$ regular graph on $v$ vertices with  $\lambda$ as its least eigenvalue. If $S$ is an independent set in $X$ such that $|S|= \dfrac{v}{1-\dfrac{k}{\lambda}}$, then each vertex not in $S$ has exactly $-\lambda$ neighbours in $S$.  
\end{cor}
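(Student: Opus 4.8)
The plan is to reinterpret the quantity ``number of neighbours of a vertex $u$ in $S$'' spectrally and then to exploit the equality clause of the ratio bound. Let $A$ denote the adjacency matrix of $\Gamma$, regarded as an element of $\mrm{End}_{\bb{C}}(\bb{C}^{V})$, and recall that for any $u \in V$ the value $(A\ind_{S})_{u}=\sum_{w\sim u}(\ind_{S})_{w}$ is exactly the number of neighbours of $u$ lying in $S$. In particular, since $S$ is independent, $(A\ind_{S})_{u}=0$ for every $u \in S$. The goal is therefore to show that $(A\ind_{S})_{u}=-\lambda$ for every $u \notin S$.

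Since $|S|=v/(1-k/\lambda)$ meets the ratio bound with equality, Theorem~\ref{thm:ratiobound} gives $\ind_{S}\in V_{k}+V_{\lambda}$. First I would write $\ind_{S}=p+q$ with $p \in V_{k}$ and $q \in V_{\lambda}$, so that $Ap=kp$ and $Aq=\lambda q$. Applying $A$ and substituting $q=\ind_{S}-p$ yields the identity
\[
A\ind_{S}=kp+\lambda q=\lambda\ind_{S}+(k-\lambda)p,
\]
so that $A\ind_{S}-\lambda\ind_{S}=(k-\lambda)p$ lies in the $k$-eigenspace $V_{k}$. Evaluating this identity at a vertex $u \in S$ and using $(A\ind_{S})_{u}=0$ gives $(k-\lambda)p_{u}=-\lambda$; evaluating it at $u \notin S$ gives $(A\ind_{S})_{u}=(k-\lambda)p_{u}$. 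Thus the corollary reduces to showing that the coordinate $(k-\lambda)p_{u}$ has the constant value $-\lambda$ for all $u$, not merely for $u \in S$.

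This last reduction is where the only real content lies. Because $\Gamma$ is $k$-regular, its $k$-eigenspace $V_{k}$ consists precisely of the functions that are constant on each connected component, and $p$ is the orthogonal projection of $\ind_{S}$ onto $V_{k}$; the relation $\langle \ind_{S},\ind_{V}\rangle=|S|$ together with regularity is what pins this projection down. In the setting relevant to this paper the derangement graph is connected, so $V_{k}$ is spanned by the all-ones vector $\ind_{V}$ and hence $p=(|S|/v)\,\ind_{V}$ is a genuine constant vector; the value of that constant is forced to be $-\lambda/(k-\lambda)$ by the computation at $u \in S$ above. Substituting back, $(A\ind_{S})_{u}=(k-\lambda)p_{u}=-\lambda$ for every $u \notin S$, as required. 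The main obstacle is therefore not the algebra but the justification that the $V_{k}$-component of $\ind_{S}$ is a scalar multiple of $\ind_{V}$; in the general (disconnected) case one must additionally observe that the projection-norm identity forces every connected component to meet $S$, after which the same argument applies component by component.
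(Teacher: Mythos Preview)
The paper does not prove this corollary at all: it is merely quoted from \cite{GMbook} as background and used later in \S\ref{sec:stability}. So there is no ``paper's own proof'' to compare against.

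Your argument is the standard one and is correct. The key identity $A\ind_{S}=\lambda\ind_{S}+(k-\lambda)p$ together with $(A\ind_{S})_{u}=0$ for $u\in S$ is exactly how this is usually done, and your computation $(k-\lambda)\cdot|S|/v=-\lambda$ from the equality $|S|=v\lambda/(\lambda-k)$ closes the loop in the connected case. Your remark about the disconnected case is also on point: one shows that equality in the ratio bound forces equality componentwise (since the bound applied to each component gives $|S\cap C|\le |C|/(1-k/\lambda)$, and summing these already saturates the global bound), after which $p$ is constant on each component with the same constant value. For the application in this paper the derangement graph $\Gamma_{G}$ is connected, so the simpler version suffices.
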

The vector space $\bb{C}^{V}$ comes with a natural inner product whose norm $||\cdot||$ satisfies \[||f||^{2}= \frac{\sum\limits_{x \in V}f(x)\ov{f(x)}}{|V|},\] for all $f \in \bb{C}^{V}$.  Ellis (see \cite[Lemma 3.2]{ellis2012proof}) provided the following generalization of the ratio bound.
\begin{theorem}[Ellis]\label{thm:stabilityratiobound}
 Let $\Gamma=(V, E)$ be a $k$ regular graph on $v$ vertices.  Let $k=\lambda_{1}\geq \lambda_{2}\ldots \geq \lambda_{v}$ be its eigenvalues and let $U:=\left\langle \ind_{V} \right\rangle \oplus V_{\lambda_{v}}$.  Let $\mu=\mrm{min}\{\lambda_{i}\ :\ \lambda_{i} \neq \lambda_{k}\}$. If $S$ is an independent set in $\Gamma$ with $|S|/v =c$, then 

 \[||\ind_{S}-\Proj_{U}(\ind_{S})||^2 \leq \frac{c|\lambda_{v}|-c^{2}(k-\lambda_{v})}{|\lambda_{v}|-|\mu|}.\]
 
\end{theorem}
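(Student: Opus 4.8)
The plan is to run a weighted ``moment'' computation on the spectral decomposition of $\ind_{S}$ --- essentially the same bookkeeping that produces the Delsarte--Hoffman bound (Theorem~\ref{thm:ratiobound}), but retaining the contribution of \emph{every} eigenspace rather than discarding all but the extremal ones. Let $A$ be the adjacency matrix of $\Gamma$, a real symmetric operator on $\bb{C}^{V}$, and let $\theta_{0}=k>\theta_{1}>\cdots>\theta_{r}=\lambda_{v}$ be its distinct eigenvalues with orthogonal spectral projections $E_{0},\ldots,E_{r}$, taken with respect to the standard inner product $\langle f,g\rangle=\sum_{x}f(x)\ov{g(x)}$ (which is $v$ times the normalized one). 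Write $f=\ind_{S}$ and set $a_{j}=\langle E_{j}f,f\rangle=\|E_{j}f\|^{2}\geq 0$. Since the $k$-eigenspace is $\langle\ind_{V}\rangle$ (as for any connected $k$-regular graph, which is the relevant setting), we have $E_{0}f=\tfrac{|S|}{v}\ind_{V}=c\,\ind_{V}$. I would then record three identities: first, $\sum_{j=0}^{r}a_{j}=\langle f,f\rangle=|S|=cv$; second, $a_{0}=\langle c\,\ind_{V},f\rangle=c^{2}v$; and third, since $S$ is independent there are no edges inside $S$, so $\langle Af,f\rangle=0$, that is $\sum_{j=0}^{r}\theta_{j}a_{j}=0$.

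The quantity to be estimated simplifies cleanly. Because $U=\langle\ind_{V}\rangle\oplus V_{\lambda_{v}}$ is exactly the image of $E_{0}+E_{r}$, one has $\Proj_{U}(f)=E_{0}f+E_{r}f$, and hence
\[
\|\ind_{S}-\Proj_{U}(\ind_{S})\|^{2}=\frac{1}{v}\sum_{j=1}^{r-1}a_{j}=:\frac{B}{v}.
\]
So it suffices to bound $B=\sum_{j=1}^{r-1}a_{j}$ from above. To do this I would isolate the terms $j=0$ and $j=r$ in the third identity, substitute $a_{0}=c^{2}v$ and $a_{r}=cv-c^{2}v-B$ (the latter from the first identity), obtaining
\[
kc^{2}v+\lambda_{v}(cv-c^{2}v-B)+\sum_{j=1}^{r-1}\theta_{j}a_{j}=0 .
\]
For each middle index one has $\theta_{j}\geq\theta_{r-1}=\mu$ and $a_{j}\geq 0$, so $\sum_{j=1}^{r-1}\theta_{j}a_{j}\geq \mu B$; inserting this lower bound and rearranging gives the core inequality
\[
(\mu-\lambda_{v})\,B\ \leq\ cv\,|\lambda_{v}|-c^{2}v\,(k-\lambda_{v}),
\]
using $-\lambda_{v}=|\lambda_{v}|$.

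To finish, I would observe that $\mu>\lambda_{v}$ makes $\mu-\lambda_{v}>0$, so the left side is nonnegative and the right-hand numerator $N:=cv|\lambda_{v}|-c^{2}v(k-\lambda_{v})$ is forced to be nonnegative as well --- this is precisely the Delsarte--Hoffman inequality $c\leq |\lambda_{v}|/(k+|\lambda_{v}|)$ re-emerging automatically. It then remains to replace the spectral denominator $\mu-\lambda_{v}$ by the stated $|\lambda_{v}|-|\mu|$. If $\mu\leq 0$ these are literally equal; if $\mu>0$ then $\mu-\lambda_{v}=|\lambda_{v}|+\mu\geq|\lambda_{v}|-\mu=|\lambda_{v}|-|\mu|>0$, so dividing the nonnegative $N$ by the smaller positive quantity only weakens the inequality. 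In either case $B\leq N/(|\lambda_{v}|-|\mu|)$, and dividing by $v$ yields the claimed estimate.

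The computation is short, so the genuinely delicate points are the bookkeeping itself rather than any deep idea. The main obstacle I anticipate is getting the denominator exactly right: the inequality naturally produces $\mu-\lambda_{v}$, and converting it to the stated $|\lambda_{v}|-|\mu|$ requires the sign analysis of $\mu$ above together with the implicit hypothesis $|\lambda_{v}|>|\mu|$ that keeps the denominator positive. The only other point needing care is the identification $E_{0}f=c\,\ind_{V}$, which relies on the top eigenvalue being simple; this is harmless for the connected derangement graphs to which the theorem is later applied, but it is worth isolating, since in a disconnected graph the projection onto $\langle\ind_{V}\rangle$ differs from the projection onto the full $k$-eigenspace and an extra nonnegative term would appear.
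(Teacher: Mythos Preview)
The paper does not actually prove Theorem~\ref{thm:stabilityratiobound}: it is quoted verbatim as a result of Ellis (\cite[Lemma~3.2]{ellis2012proof}) and used as a black box in \S\ref{sec:stability}. So there is no ``paper's own proof'' to compare against. That said, your argument is correct and is exactly the standard spectral bookkeeping that underlies Ellis's lemma: expand $\ind_{S}$ in eigenspaces, use $\langle A\ind_{S},\ind_{S}\rangle=0$, isolate the top and bottom pieces, and bound the middle block via $\theta_{j}\geq\mu$. Your treatment of the denominator $\mu-\lambda_{v}$ versus $|\lambda_{v}|-|\mu|$ is the right case split, and the two caveats you flag---simplicity of the top eigenvalue (connectedness) and the implicit hypothesis $|\lambda_{v}|>|\mu|$---are precisely the standing assumptions under which the bound is applied later to $\Gamma_{G}$.
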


Roughly speaking, the above result shows that if $S$ is a ``large'' independent set in $\Gamma$, then $\ind_{S}$ is ``close'' (in Euclidean sense) to the space $U$. We use this to prove Theorem~\ref{thm:agln2stablity}. We finish this section, by mentioning a result that helps us compute the eigenvalues of derangement graphs.

We now recall some results on eigenvalues and eigenspaces of $\Gamma_{G}$. Spectra of adjacency matrix of derangement graphs follows from results by Babai (\cite{Babai}) and Diaconis--Shahshahani (\cite{DS1981}) on normal Cayley graphs. Before describing the result, we establish some notation. The vector space $\bb{C}^{G}$ can be considered as the group algebra of $G$. Given $\eta \in \irr(G)$, let $E_{\eta} \in \bb{C}^{G}$ be the map satisfying 
\[E_{\eta}(g)= \dfrac{\eta(1)}{|G|} \sum\limits_{g \in G} \eta(g^{-1})g.\] By $U_{\eta}$, we denote the $2$-sided ideal of $\bb{C}^{G}$ generated by $E_{\eta}$. It is well known (see for instance \cite{isaacs2008character}) that $U_{\eta}$ is a $\eta(1)^{2}$-dimensional subspace. 
 The following lemma, which is a special case of Lemma~5 of \cite{DS1981}, describes the spectrum of the adjacency matrix of $G$. 

\begin{lemma}{\rm(Babai, Diaconis--Shahshahani)}\label{lem:spec}
Let $G$ be a permutation group of degree $n$, with $\Der(G) \subset G$ being the set of derangements.  Then the set of eigenvalues of $\Gamma_{G}=\mrm{Cay}(G,\ \Der(G))$ is 
$\{\lambda_{\eta}\ :\ \eta \in \mrm{Irr}(G)\}$, where $$\lambda_{\eta}=\frac{1}{\eta(1)}\sum\limits_{g \in D(G)}\eta(g).$$ Given an eigenvalue $\nu$, then the $\nu$-eigenspace in $\bb{C}^{G}$ is the two-sided ideal
$$\sum\limits_{\{\eta \ :\ \eta\in \mrm{Irr}(G)\ \text{and}\ \lambda_{\eta}=\nu\}} U_{\eta}.$$
\end{lemma}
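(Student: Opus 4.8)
The plan is to recognise $\Gamma_{G}$ as a \emph{normal} Cayley graph and to diagonalise its adjacency operator inside the group algebra $\bb{C}^{G}$ via the Wedderburn decomposition. The first observation is that the connection set $\Der(G)$ is both inverse-closed and closed under conjugation: a permutation and its inverse have the same fixed points, and conjugate permutations have the same cycle type, hence the same number of fixed points. Consequently $\Der(G)$ is a union of conjugacy classes of $G$, and the class-sum element $z := \sum_{d \in \Der(G)} d$ lies in the centre $Z(\bb{C}^{G})$ of the group algebra; moreover $z = z^{*}$ under the standard involution, reflecting the fact that the adjacency matrix of an undirected graph is symmetric.

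Next I would identify the adjacency operator of $\Gamma_{G}$ with multiplication by $z$ on $\bb{C}^{G}$. Writing $A$ for the adjacency matrix, in the group-element basis $A$ sends $g \mapsto \sum_{d \in \Der(G)} gd$, i.e.\ $A$ is right multiplication by $z$; since $z$ is central this coincides with left multiplication, so it does not matter which Cayley-graph convention ($xy^{-1}\in\Der(G)$ or $x^{-1}y\in\Der(G)$) one adopts, inverse-closedness making the two isomorphic with the same spectrum.

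Now I would invoke the Artin--Wedderburn decomposition $\bb{C}^{G} = \bigoplus_{\eta\in\irr(G)} U_{\eta}$, where each two-sided ideal $U_{\eta}$ is generated by the central primitive idempotent $E_{\eta}$ and is isomorphic as an algebra to the matrix algebra of size $\eta(1)$, of dimension $\eta(1)^{2}$. Each $U_{\eta}$ is invariant under multiplication by $z$, and because $z$ is central it acts on the simple block $U_{\eta}$ through its centre, i.e.\ as a scalar $\lambda_{\eta}$; equivalently $z E_{\eta} = \lambda_{\eta} E_{\eta}$, so $\lambda_{\eta}$ is the value of the central character $\omega_{\eta}$ at $z$. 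This immediately shows that the spectrum of $A$ is $\{\lambda_{\eta} : \eta\in\irr(G)\}$ and that the $\nu$-eigenspace is the sum of those ideals $U_{\eta}$ for which $\lambda_{\eta}=\nu$, exactly as claimed.

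It remains to compute the scalar $\lambda_{\eta}$. Using the standard formula for the central character on a class sum, $\omega_{\eta}\big(\sum_{g\in C} g\big) = |C|\,\eta(c)/\eta(1)$ for a class $C$ with representative $c$, and extending additively over the classes making up $\Der(G)$, I obtain $\lambda_{\eta} = \frac{1}{\eta(1)}\sum_{d\in\Der(G)}\eta(d)$, the stated eigenvalue. (Alternatively one can obtain the same scalar by comparing traces: $z$ acts as $\lambda_{\eta}$ on the $\eta(1)^{2}$-dimensional space $U_{\eta}$, and evaluating the trace of right multiplication by $z$ on the regular representation recovers the character sum.) There is no serious obstacle here; the only points requiring care are the bookkeeping identification of the adjacency operator with the central element $z$ (together with the attendant left/right and inverse conventions) and the correct normalisation in the central-character computation, both of which are routine once the normal Cayley graph structure is in place.
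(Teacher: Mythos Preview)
Your argument is correct and is the standard proof of this well-known result on normal Cayley graphs. The paper itself does not prove this lemma; it simply cites it as a special case of Lemma~5 of Diaconis--Shahshahani (and Babai), so there is nothing to compare against beyond noting that your sketch is exactly the classical approach underlying those references.
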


\subsection{Character Theory of $\agl(n,2)$}\label{sec:characters}
In this section, we describe the irreducible characters of $G:=\agl(n,2)$ and some permutation characters arising from some of its natural actions. By $\one$, we denote the trivial character of $G$.

The irreducible characters of $G$ can be constructed by using the characters of $V:=\bb{F}^{n}_{2}$ and $G_{0}:=\gl(n,2)$. We now describe this construction. The group $G_{0}$ acts on $\irr(V)$ via 
\begin{center}
$M\cdot \chi(v)=\chi(Mv)$, for all $M \in G_{0}$, $\chi \in \irr{V}$, and $v\in V$.   
\end{center}
The action of $G_{0}$ on $V$ has two orbits: $\{\mathbf{0}\}$ and $V\setminus \{\mathbf{0}\}$. Therefore, the action of $G_{0}$ on $\irr(V)$ has two orbits: $\one_{V}$ and $\irr(V)\setminus \{\one_{V}\}$. We now describe two types of representations of $G$:
\begin{enumerate}
\item Given an irreducible representation $\Phi$ of $G_{0}$, by $\tilde{\Phi}$, we denote the irreducible representation of $G$ obtained by composing $G$ with the canonical projection from $G$ to $G_{0}$.
\item Let $\delta$ be a fixed non-trivial linear character of $V$. The stabilizer $G_{0,\delta}$ of $\delta$ in $G_{0}$ is isomorphic to $\agl(n-1,2)$. The linear character $\delta$ extends to a linear character of $G_{0,\delta}V$ by setting $\delta(N,v)= \delta(v)$, for all $N \in G_{0,\delta}$ and $v \in V$. Given an irreducible representation $\rho$ of  $G_{0,\delta}$, let $\ov{\rho}$ be the irreducible representation of $G_{0,\delta}V$ obtained by composing $\rho$ with the canonical projection from $G_{0,\delta}V$ to $K$. The representation $\mrm{Ind}^{G}_{G_{0,\delta}V} (\delta \otimes \ov{\rho})$ is an irreducible representation of $G$. 
\end{enumerate}

Using the well-known characterization of complex irreducible representations of semidirect products by abelian groups (see for instance Proposition 25 of \S~8.2 of \cite{serre1977linear}), every representation of $G$ is one of the above two types. 

\begin{lemma}\label{lem:affineirr}
Let $\delta$ be an arbitrary but fixed non-trivial irreducible character of $V$. Then 
\[\irr(G)=\{\tilde{\phi}\ :\ \phi \in \irr(G_{0})\} \cup \{\mrm{Ind}^{G}_{G_{0,\delta}V} (\delta \otimes \ov{\rho})\ :\ \rho \in \irr(G_{0,\delta}) \}.\]
\end{lemma}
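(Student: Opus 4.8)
The statement to prove is Lemma~\ref{lem:affineirr}, the classification of irreducible characters of $G = \agl(n,2)$. The plan is to invoke the standard theory of irreducible representations of semidirect products $A \rtimes H$ where $A$ is abelian, as developed in \S~8.2 of \cite{serre1977linear}, applied with $A = V$ and $H = G_0 = \gl(n,2)$. Here $G$ acts on $\irr(V)$ as indicated in the text, and Proposition~25 of \cite{serre1977linear} tells us that every irreducible representation of $G$ is obtained as follows: pick an orbit of $G_0$ on $\irr(V)$, choose an orbit representative $\chi$, let $H_\chi$ be its stabilizer in $G_0$, extend $\chi$ to $H_\chi V$ (trivially on $H_\chi$), tensor with (the inflation of) an irreducible representation $\rho$ of $H_\chi$, and induce up to $G$; distinct orbits and distinct $\rho$ (up to iso) give distinct irreducibles, and all irreducibles arise this way.

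So the argument reduces to enumerating the orbits of $G_0$ on $\irr(V)$ and identifying the stabilizers. First I would note that $\irr(V)$, since $V = \bb{F}_2^n$ is elementary abelian, is itself a vector space of dimension $n$ over $\bb{F}_2$ (the dual space $V^\ast$), and the $G_0$-action is the natural dual action $M \cdot \chi = \chi \circ M$ — this is exactly the linear action transposed, so $G_0$ acts on $\irr(V) \setminus \{\one_V\}$ transitively, just as it does on $V \setminus \{\mbf{0}\}$. Hence there are exactly two orbits: the singleton $\{\one_V\}$ and $\irr(V) \setminus \{\one_V\}$, as the text already observes. For the orbit $\{\one_V\}$: the stabilizer is all of $G_0$, the character $\one_V$ extends trivially to $G_0 V = G$, induction from $G$ to $G$ is the identity, so the resulting irreducibles are exactly the inflations $\tilde\phi$ of $\phi \in \irr(G_0)$ — this is the first set in the claimed union. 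For the orbit $\irr(V) \setminus \{\one_V\}$: fix the representative $\delta$, its stabilizer in $G_0$ is $G_{0,\delta}$, which the text notes is isomorphic to $\agl(n-1,2)$ (one should check: $G_{0,\delta}$ is the subgroup of $\gl(n,2)$ fixing the nonzero functional $\delta$, i.e. preserving the hyperplane $\ker\delta$ pointwise on the quotient — concretely it is the stabilizer of a linear functional, which is a parabolic-type subgroup isomorphic to $\gl(n-1,2) \ltimes \bb{F}_2^{n-1}$). Then the construction in item (2) of the text is precisely Serre's recipe: extend $\delta$ to $G_{0,\delta} V$, tensor with the inflation $\ov\rho$ of $\rho \in \irr(G_{0,\delta})$, and induce to $G$. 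This gives the second set in the claimed union.

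The two families are disjoint and exhaust $\irr(G)$ because the orbits $\{\one_V\}$ and $\irr(V)\setminus\{\one_V\}$ are distinct, and within each orbit the parametrization by $\irr$ of the stabilizer is a bijection onto the corresponding irreducibles of $G$ — both facts being part of the statement of Serre's Proposition~25. The main obstacle, such as it is, is purely bookkeeping: verifying that the ad hoc construction described in items (1) and (2) of the preceding text matches Serre's abstract recipe verbatim (in particular that $\delta(N,v) := \delta(v)$ really is a well-defined linear character of $G_{0,\delta}V$ extending $\delta$, which holds because $G_{0,\delta}$ fixes $\delta$), and confirming the isomorphism type $G_{0,\delta} \cong \agl(n-1,2)$. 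Neither of these is deep. I would therefore write the proof as: recall the orbit decomposition of $G_0$ on $\irr(V)$; identify the point stabilizers; then quote Proposition~25 of \cite{serre1977linear} to conclude. No further computation is needed.
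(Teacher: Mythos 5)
Your proof is correct and follows essentially the same route as the paper: the paper's own justification is precisely the appeal to Proposition~25 of \S~8.2 of Serre (Wigner--Mackey for a semidirect product with abelian normal subgroup), combined with the observation that $G_{0}$ has exactly two orbits on $\irr(V)$ with stabilizers $G_{0}$ and $G_{0,\delta}\cong\agl(n-1,2)$. Your additional bookkeeping (checking that $\delta(N,v)=\delta(v)$ is a well-defined extension and identifying the stabilizer) is exactly the verification implicit in the paper's construction, so there is nothing to add.
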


We now describe permutation characters of $G$ arising from some natural actions.  Let $\pi$ be the permutation character associated with the action of $G$ on $V$. By $V^{\{2\}}$, denote the set of $2$-subsets of $V$, and let $V^{(2)}$ denote the set of ordered pairs of distinct elements in $V$. Let $\pi^{\{2\}}$ and $\pi^{(2)}$ be the permutation characters associated with the $G$-sets $V^{\{2\}}$ and $V^{(2)}$, respectively. Let $H$ denote the stabilizer in $G$ of $(\mbf{0},\ \mbf{e}_{n}) \in V^{(2)}$ and let $K$ be the stabilizer of $\{\mbf{0},\ \mbf{e}_{n}\} \in V^{\{2\}}$. We now note that $\pi^{(2)}= \mrm{Ind}^{G}_{H}(\one)$ and $\pi^{\{2\}}= \mrm{Ind}^{G}_{K}(\one)$. We now describe the decomposition of $\pi^{(2)}$ and $\pi^{\{2\}}$ as sums of irreducible characters.

The action of $G$ on $V$ is $2$-transitive, and so $\psi:=\pi-\one$ is an irreducible character. The action of $H$ on $V$ has $3$ orbits and the action of $K$ on $V$ has $2$ orbits. Using Frobenius reciprocity and Burnside's orbit counting lemma, we have
\begin{align*}
    3&= \dfrac{1}{|H|} \sum\limits_{h \in H} (1+\psi(h)) = 1 + \left\langle\psi,\ \mrm{Ind}^{G}_{H}(\one) \right\rangle \\
    2&= \dfrac{1}{|K|} \sum\limits_{k \in K} (1+\psi(k)) = 1 + \left\langle\psi,\ \mrm{Ind}^{G}_{K}(\one) \right\rangle.
\end{align*}

Thus, 
\begin{equation}\label{eq:stdmultiplicity}
\left\langle \psi,\ \pi^{(2)} \right\rangle =2\ \&\ \ \left\langle \psi,\ \pi^{\{2\}} \right\rangle =1.
\end{equation}

Next, we consider the $2$-transitive action of $\gl(n,2)$ on $V \setminus \{\mbf{0}\}$. As $\gl(n,2)$ is a quotient of $G$ by $V$, we can extend this to a $2$-transitive action of $G$ on $V \setminus \{\mbf{0}\}$. Let $\theta \in \irr(G)$ be such that $\one + \theta$ is the permutation character with respect to this action. We note that $\mrm{ker}(\theta)=V$. We note that the actions of both $H$ and $K$  on $V\setminus\{0\}$ have exactly $2$ orbits. Now, arguing as above, we can conclude that 

\begin{equation}\label{eq:glstdamultiplicity}
\left\langle \theta,\ \pi^{(2)} \right\rangle =1\ \&\ \left\langle \theta,\ \pi^{\{2\}} \right\rangle =1.
\end{equation}

We observe that the orbits of $K$ on $V^{\{2\}}$ are
\begin{subequations}\label{eq:Korbitssets}
\begin{align}
\mfk{O}_{1}^{\{2\}} :=& \{\{\mbf{0},\ \mbf{e}_{n}\}\};\\
\mfk{O}_{2}^{\{2\}} :=&\{\{\mbf{0},\ v\} :\ v \in V\setminus \{\mbf{0},\ \mbf{e}_{n}\}\} \cup \{\{\mbf{e}_{n},\ v\} :\ v \in V\setminus \{\mbf{0},\ \mbf{e}_{n}\}\};\\
\mfk{O}_{3}^{\{2\}} :=&\{\{v,\ w\} :\ v, w \in V\setminus \{\mbf{0},\ \mbf{e}_{n}\}\ \&\ v+w=\mbf{e}_{n}\};\ \text{and}\\
\mfk{O}_{4}^{\{2\}} :=&\{\{v,\ w\} :\ v,\ w,\ v+w \in V\setminus \{\mbf{0},\ \mbf{e}_{n}\}\}.
\end{align}
\end{subequations}

As $K$ as $4$ orbits on $V^{\{2\}}$ and $\pi^{\{2\}}=\mrm{Ind}^{G}_{K}(\one)$, we can conclude that $\left\langle \pi^{\{2\}},\ \pi^{\{2\}} \right\rangle =4$, and thus that

\begin{equation}\label{eq:alphadef}
\alpha := \pi^{\{2\}}-\one-\theta -\psi
\end{equation}  is an irreducible character. 

We observe that the orbits of $H$ on $V^{\{2\}}$ are precisely $\mfk{O}_{1}^{\{2\}}$, $\mfk{O}_{3}^{\{2\}}$, $\mfk{O}_{3}^{\{2\}}$,\\ $\{\{\mbf{0},\ v\} :\ v \in V\setminus \{\mbf{0},\ \mbf{e}_{n}\}\}$ and  $\{\{\mbf{e}_{n},\ v\} :\ v \in V\setminus \{\mbf{0},\ \mbf{e}_{n}\}\}$. Thus, arguing as above by using orbit-counting and Frobenius reciprocity, we have $\left\langle \pi^{\{2\}},\ \pi^{(2)} \right\rangle =5$. Now using \eqref{eq:stdmultiplicity}, \eqref{eq:glstdamultiplicity}, and \eqref{eq:alphadef}, we have 
\begin{equation}\label{eq:alphamultiplicity}
\left\langle \pi^{(2)},\ \alpha \right\rangle =1.
\end{equation}
So far, we described four distinct irreducible summands of $\pi^{(2)}$. We claim that 
\begin{equation}\label{eq:betadef}
\beta :=\pi^{(2)}-2\psi-\theta-\alpha
\end{equation}
is irreducible. To show this, we first observe that the action of $H$ on $V^{(2)}$ (set of distinct pairs of points in $V$) has $8$ orbits, namely,

\begin{subequations}\label{eq:Horbitspairs}
\begin{align}
\mfk{Q}_{1}^{(2)} :=& \{ (\mbf{0},\ \mbf{e}_{n})\};\\
\mfk{Q}_{2}^{(2)} :=& \{ (\mbf{e}_{n},\ \mbf{0} )\};\\
\mfk{Q}_{3}^{(2)} :=&\{(\mbf{0},\ v) :\ v \in V\setminus \{\mbf{0},\ \mbf{e}_{n}\}\};\\
\mfk{Q}_{4}^{(2)} :=&\{(v,\ \mbf{0}) :\ v \in V\setminus \{\mbf{0},\ \mbf{e}_{n}\}\};\\
\mfk{Q}_{5}^{(2)} :=&\{(\mbf{e}_{n},\ v) :\ v \in V\setminus \{\mbf{0},\ \mbf{e}_{n}\}\};\\
\mfk{Q}_{6}^{(2)} :=&\{(v,\ \mbf{e}_{n}) :\ v \in V\setminus \{\mbf{0},\ \mbf{e}_{n}\}\};\\
\mfk{Q}_{7}^{(2)} :=&\{(v,\ w) :\ v, w \in V\setminus \{\mbf{0},\ \mbf{e}_{n}\}\ \&\ v+w=\mbf{e}_{n}\};\ \text{and}\\
\mfk{Q}_{8}^{(2)} :=&\{(v,\ w) :\ v,\ w,\ v+w \in V\setminus \{\mbf{0},\ \mbf{e}_{n}\}\}.
\end{align}
\end{subequations}

Therefore, we have $\left\langle \pi^{(2)},\ \pi^{(2)}  \right\rangle=8$, and thus using \eqref{eq:stdmultiplicity},\ \eqref{eq:glstdamultiplicity}, \eqref{eq:alphamultiplicity}, we can conclude that $\beta$ is an irreducible character with $\left\langle \pi^{(2)},\ \beta  \right\rangle=1$. We have proved the following

\begin{lemma}\label{lem:permcharsetdecom}
\begin{enumerate}
\item  $\pi^{\{2\}}=1+\psi +\theta +\alpha $ is the decomposition of $\pi^{\{2\}}=\mrm{Ind}^{G}_{K}(\one)$ into  irreducible characters of $G$.
    \item $\pi^{(2)}=1+2\psi +\theta +\alpha +\beta$ is the decomposition of $\pi^{(2)}=\mrm{Ind}^{G}_{H}(\one)$ into  irreducible characters of $G$.
\end{enumerate}
\end{lemma}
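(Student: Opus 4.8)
The plan is to read off both decompositions from Frobenius reciprocity together with Burnside's orbit-counting lemma, using the standard facts that for a subgroup $L\leq G$ and an irreducible $\chi\in\irr(G)$ one has $\left\langle\chi,\ \mrm{Ind}^{G}_{L}(\one)\right\rangle=\left\langle\chi|_{L},\ \one\right\rangle$, and that $\left\langle\mrm{Ind}^{G}_{L}(\one),\ \mrm{Ind}^{G}_{L}(\one)\right\rangle$ equals the number of $L$-orbits on the $G$-set $G/L$. Both permutation characters are of this shape: $\pi^{\{2\}}=\mrm{Ind}^{G}_{K}(\one)$ with $G/K\cong V^{\{2\}}$, and $\pi^{(2)}=\mrm{Ind}^{G}_{H}(\one)$ with $G/H\cong V^{(2)}$. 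The irreducibles $\one$, $\psi=\pi-\one$ (irreducible since $G$ is $2$-transitive on $V$) and $\theta$ (the nontrivial constituent of the lifted $2$-transitive action of $\gl(n,2)$ on $V\setminus\{\mbf{0}\}$, with $V$ in its kernel, so distinct from $\psi$) are already in hand, and the orbit counts of $K$ and $H$ on $V$ and on $V\setminus\{\mbf{0}\}$ fix their multiplicities.

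For part (1): enumerating the $K$-orbits on $V^{\{2\}}$ — the stabilized pair; the pairs meeting it in exactly one point; the pairs $\{v,w\}$ with $v,w\notin\{\mbf{0},\mbf{e}_{n}\}$ and $v+w=\mbf{e}_{n}$; and all remaining pairs, i.e.\ exactly the list \eqref{eq:Korbitssets} — shows $K$ has $4$ orbits, so $\left\langle\pi^{\{2\}},\ \pi^{\{2\}}\right\rangle=4$. Since $K$ has $2$ orbits on $V$ and $2$ on $V\setminus\{\mbf{0}\}$, Frobenius reciprocity gives $\left\langle\psi,\ \pi^{\{2\}}\right\rangle=\left\langle\theta,\ \pi^{\{2\}}\right\rangle=1$, and $\left\langle\one,\ \pi^{\{2\}}\right\rangle=1$ as $G$ is transitive on $V^{\{2\}}$; these three distinct irreducibles account for $1^{2}+1^{2}+1^{2}=3$ of the $4$, so exactly one further irreducible occurs, with multiplicity $1$. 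Hence $\alpha:=\pi^{\{2\}}-\one-\psi-\theta$ is an honest character (the subtracted pieces occur with multiplicity exactly one) of norm $\left\langle\alpha,\ \alpha\right\rangle=1$, so irreducible, and $\pi^{\{2\}}=\one+\psi+\theta+\alpha$.

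For part (2): first compute $\left\langle\alpha,\ \pi^{(2)}\right\rangle$. The $H$-orbits on $V^{\{2\}}$ are $\mfk{O}_{1}^{\{2\}}$, $\mfk{O}_{3}^{\{2\}}$, $\mfk{O}_{4}^{\{2\}}$ together with the two halves of $\mfk{O}_{2}^{\{2\}}$ (pairs through $\mbf{0}$, and pairs through $\mbf{e}_{n}$), so $H$ has $5$ orbits on $V^{\{2\}}$ and $\left\langle\pi^{\{2\}},\ \pi^{(2)}\right\rangle=5$; subtracting the contributions of $\one,\psi,\theta$ — which occur in $\pi^{(2)}$ with multiplicities $1,2,1$ by the counts of $H$-orbits on $V$ (three) and on $V\setminus\{\mbf{0}\}$ (two) — yields $\left\langle\alpha,\ \pi^{(2)}\right\rangle=5-1-2-1=1$. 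Enumerating the $H$-orbits on $V^{(2)}$ by recording which of $\mbf{0},\mbf{e}_{n}$ (if any) sits in the first or second coordinate, whether $v+w=\mbf{e}_{n}$, and the generic case — i.e.\ the list \eqref{eq:Horbitspairs} — gives $8$ orbits, so $\left\langle\pi^{(2)},\ \pi^{(2)}\right\rangle=8$. With $\one,\psi,\theta,\alpha$ contributing $1^{2}+2^{2}+1^{2}+1^{2}=7$, one further irreducible remains with multiplicity $1$, so $\beta:=\pi^{(2)}-\one-2\psi-\theta-\alpha$ is a character of norm $1$, hence irreducible, and $\pi^{(2)}=\one+2\psi+\theta+\alpha+\beta$.

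The only real work is the bookkeeping of the orbit enumerations: verifying that $K$ has $4$ orbits on $V^{\{2\}}$, that $H$ has $5$ orbits on $V^{\{2\}}$ and $8$ on $V^{(2)}$, and the small orbit counts of $H$ and $K$ on $V$ and $V\setminus\{\mbf{0}\}$. Once these are pinned down — and they are exactly the displayed lists \eqref{eq:Korbitssets} and \eqref{eq:Horbitspairs} — both decompositions follow formally from the inner-product identities, with no analytic or structural obstacle beyond the casework.
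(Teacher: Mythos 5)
Your proof is correct and follows essentially the same route as the paper: Frobenius reciprocity plus Burnside orbit counts on $V$, $V\setminus\{\mbf{0}\}$, $V^{\{2\}}$ and $V^{(2)}$, with the irreducibility of $\alpha$ and $\beta$ extracted from the norm computations $\left\langle \pi^{\{2\}},\pi^{\{2\}}\right\rangle=4$ and $\left\langle \pi^{(2)},\pi^{(2)}\right\rangle=8$. The only (welcome) refinement is that you state the five $H$-orbits on $V^{\{2\}}$ correctly, whereas the paper's list contains a typographical repetition.
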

\section{Proof of Theorem~\ref{thm:agln2sker}.}\label{sec:strict}
We recall that the group $G:=\agl(n,2)$ acts $3$-transitively on the vector space $V := \bb{F}_{2}^{n}$. Let $V^{(2)}$ denote the set of ordered pairs of distinct elements in $V$. The derangement matrix $M(G)$ is the matrix indexed by $\Der(G) \times V^{(2)}$, satisfying \[M(G)_{d,\ (a,b))} =\begin{cases} 1 & \text{if $d(a)=b$,}\\
0 & \text{otherwise.}
\end{cases}
\]  
Using the module method (Theorem~\ref{thm:modulemethod}), to prove Theorem~\ref{thm:agln2sker}, it suffices to show that the rank of the derangement matrix $M(G)$ is $(2^{n}-1)(2^{n}-2)$.  In this section, we prove this equality by assuming the following technical result, which we prove in \S~\ref{sec:charactersums}. 

\begin{lemma}\label{lem:charactersums}
\begin{enumerate}
    \item For $\eta \in \{\one,\ \theta,\ \alpha,\ \beta \}$, we have  \[\sum\limits_{s \in S} \eta(s^{-1})\neq 0.\]
    \item For all $t \in \mfk{C}$, \[\sum\limits_{\substack{g \in G\\ tg(\mbf{0})= g(\mbf{e}_{n})}} \eta(g^{-1}) =0,\]    
\end{enumerate}
\end{lemma}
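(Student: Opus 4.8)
The plan is to compute the two kinds of sums using the concrete description of $\irr(G)$ from Lemma~\ref{lem:affineirr} together with Frobenius reciprocity. For part (1), the key observation is that for $\eta \in \{\one,\theta,\alpha,\beta\}$, the sum $\sum_{s\in S}\eta(s^{-1})$ — where $S$ is a canonical intersecting set, so $S = \I_{\alpha\to\beta}$ is a coset of a point stabilizer $G_\omega \cong \gl(n,2)$ — equals $\frac{|G|}{2^n}\cdot(\text{some inner product of restrictions})$, up to a root-of-unity twist coming from which coset is chosen. More precisely, $\sum_{s\in G_\omega} \eta(s^{-1}) = |G_\omega|\langle \eta|_{G_\omega},\one\rangle = |G_\omega|\langle \eta,\mrm{Ind}^G_{G_\omega}\one\rangle = |G_\omega|\langle\eta,\pi\rangle$, since the permutation character on $V$ is $\pi = \one+\psi$. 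So for $\eta = \one$ this is $|G_\omega|\neq 0$, for $\eta\in\{\theta,\alpha,\beta\}$ one needs $\langle\eta,\pi\rangle$; but $\pi = \one+\psi$ has only two constituents, so $\langle\theta,\pi\rangle=\langle\alpha,\pi\rangle=\langle\beta,\pi\rangle=0$, which would give zero, not nonzero. This tension means that in fact $S$ here is \emph{not} a point stabilizer coset but something slightly different (likely a coset of $H$ or related to the $V^{(2)}$ action in the module method setup), so I would first pin down precisely which set $S$ and which twist $t$ appear, then redo the computation: the relevant sum should reduce to an inner product $\langle\eta,\pi^{(2)}\rangle$ or $\langle\eta,\pi^{\{2\}}\rangle$, and Lemma~\ref{lem:permcharsetdecom} tells us $\eta\in\{\one,\theta,\alpha,\beta\}$ each appear with nonzero multiplicity in $\pi^{(2)}$, giving (1).

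For part (2), the sum $\sum_{g:\,tg(\mbf{0})=g(\mbf{e}_n)}\eta(g^{-1})$ is a sum over a \emph{coset-like} subset of $G$ cut out by the condition $g(\mbf{e}_n) = t\cdot g(\mbf{0})$; writing $g = (M,v)$ this says $Me_n = (t-1)$-shift of the pair, i.e.\ the condition singles out those $g$ for which the image pair $(g(\mbf{0}),g(\mbf{e}_n))$ lies in a specified orbit of the cyclic-type element $t$. I would interpret this set as a union of cosets of $H$ (the stabilizer of $(\mbf{0},\mbf{e}_n)$) indexed by the pairs $(a,b)$ with $b = ta$, and then the sum becomes $\sum_{(a,b):b=ta}\,\sum_{g\in \I_{(\mbf 0,\mbf e_n)\to(a,b)}}\eta(g^{-1})$. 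Each inner sum equals a root of unity times $|H|\langle\eta,\pi^{(2)}\rangle$ when $\eta = \one$, but for $\eta$ ranging over the constituents the phases conspire. The cleanest route is: the function $g\mapsto \eta(g^{-1})$ lies in the irreducible $G$-submodule $U_\eta$ of $\bb C[G]$, and summing it over the set $\{g: tg(\mbf 0)=g(\mbf e_n)\}$ is the same as pairing $\eta$ with the indicator of that set; that indicator, being a $\bb C[\langle t\rangle]$-ish object but more usefully a sum of canonical-intersecting-set indicators, lies in the span $U_G$ of canonical intersecting set indicators, which decomposes through $\pi^{(2)}$. The vanishing then follows because the specific combination of canonical sets summed here projects to zero in the $U_\eta$ component — concretely, because $t$ acts without the relevant fixed structure, a character-orthogonality / Burnside argument on the orbits \eqref{eq:Horbitspairs} forces the coefficient to cancel.

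The main obstacle I anticipate is \textbf{part (2)}: getting the phases right. Part (1) is essentially bookkeeping with Lemma~\ref{lem:permcharsetdecom} once the set $S$ is correctly identified, and the nonvanishing is robust (it's a sum of a nonzero multiplicity times a nonzero Gauss-sum-like factor). But (2) asserts exact cancellation, so it is not enough to bound things — one must exhibit the algebraic identity. I would handle this by expanding $\eta$ via Lemma~\ref{lem:affineirr}: for $\eta = \tilde\phi$ (inflated from $\gl(n,2)$) the sum over $g=(M,v)$ factors as $\big(\sum_v [\text{condition on }v]\big)\cdot\big(\sum_M \phi(M^{-1})[\dots]\big)$ and the inner $v$-sum is a character sum over $\bb F_2^n$ that vanishes unless a certain linear functional is trivial, which it isn't for the $t$ in question; for $\eta = \mrm{Ind}^G_{G_{0,\delta}V}(\delta\otimes\ov\rho)$ one uses the induced-character formula and the same $\delta$-twist produces a geometric-series cancellation over the $t$-orbit on $V$. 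The delicate point is the boundary terms at $\mbf 0$ and the orbit $\mfk{O}_3^{\{2\}}$ where $v+w=\mbf e_n$; I would treat these separately and check that they, too, cancel. I expect the whole argument to fit once the reduction to character sums over $\bb F_2^n$ is made explicit, which is presumably exactly what \S~\ref{sec:charactersums} carries out.
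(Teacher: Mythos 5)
There is a genuine gap, and it sits at the heart of both parts. The set $S=\{g\in G:\ \mbf{c}g(\mbf{0})=g(\mbf{e}_{n})\}$ is not a coset of $H$ (nor of a point stabilizer): it is the product $\mbf{C}H$, where $\mbf{C}=C_{G}(\mbf{c})$, and the paper's first real step is to prove that $\mbf{C}$ is a regular subgroup of order $2^{n}$, so that $S$ is a union of $2^{n}$ cosets $xH$, $x\in\mbf{C}$. Your argument for part (1) — reduce $\sum_{s\in S}\eta(s^{-1})$ to a multiplicity $\left\langle \eta,\pi^{(2)}\right\rangle$ and invoke Lemma~\ref{lem:permcharsetdecom} — would be valid if $S$ were the single coset $H$, but over a union of cosets the contributions can cancel, and they do: $\psi$ also has nonzero multiplicity in $\pi^{(2)}$, yet $\sum_{s\in S}\psi(s^{-1})=0$, which is exactly part (2) at $t=\mbf{c}$. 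So the nonvanishing in (1) is not ``robust''; it genuinely requires evaluating $\sum_{x\in\mbf{C}}\eta(xH)$, which the paper does via an orbit-intersection formula (sum of $|O_{i}\cap xO_{i}|/|O_{i}|$ over the $H$-orbits $O_{i}$ on $V$, $V\setminus\{\mbf{0}\}$, $V^{\{2\}}$, $V^{(2)}$), with a careful case analysis over $x\in\mbf{C}$ distinguishing $Id$, $\mbf{c}$, $\mbf{c}^{-1}$, $\mbf{s}$ and the rest. Your proposal contains neither the identification $S=\mbf{C}H$ (with the regularity of $\mbf{C}$ that makes it work) nor any substitute computation, so the decisive content of the lemma is missing.

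For part (2) the proposal has a second problem: the statement (as it is used in \S~\ref{sec:strict}) concerns the specific character $\psi$, and you never pin this down. Worse, the mechanism you sketch — for inflated characters $\tilde{\phi}$ the sum factors and an inner character sum over $\bb{F}_{2}^{n}$ vanishes — cannot be right, because $\theta$ is itself inflated from $\gl(n,2)$ (its kernel is $V$), and by part (1) its sum over $S$ is \emph{nonzero}; moreover, for an inflated character $\eta(g^{-1})$ does not depend on the translation part $v$ at all, so no oscillating $v$-sum appears, only a count of admissible $v$ for each linear part. The paper instead gets $\sum_{s\in\mbf{C}H}\psi(s^{-1})=0$ from the same orbit-intersection computation (using that $\mbf{C}$ acts regularly on $V$), and then needs a further argument — a class-function/coset observation (citing Lemma~2.1 of \cite{d2023cameron}) — to transfer the vanishing from $t=\mbf{c}$ to every conjugate $t\in\mfk{C}$; your proposal does not address this last step either. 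Your decomposition of $\{g:\ tg(\mbf{0})=g(\mbf{e}_{n})\}$ into cosets of $H$ indexed by pairs $(a,ta)$ is correct and is implicitly the same decomposition as $\mbf{C}H$, but the asserted cancellation (``the phases conspire'') is exactly what has to be proved, and no workable mechanism for it is given.
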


Let $M$ to be the linear map from $\bb{C}[V^{(2)}]$ to $\bb{C}[\Der(G)]$ satisfying \[M(\ups_{(a,b)}):= \sum\limits_{\substack{g \in \Der(G)\\ g(a)=b}} \ups_{g}.\] The matrix representation of $M$ with respect to standard bases is $M(G)$. Following an argument in \S~3.2 of \cite{LPSX2018}, we first show that $\mrm{Rank}(M(G)) \leq (2^{n}-1)(2^{n}-2)$. Given $(a,b) \in \vp$, we consider the following vectors in $\bb{C}[\vp]$,
\begin{subequations}\label{eq:kernelvectors}
  \begin{align}
      l_{(a,b)}& := \sum\limits_{\substack{v \in V\\ v\neq a,b}}\left( \ups_{(a,v)}-\ups_{(b,v)} \right) + \ups_{(a,b)}- \ups_{(b,a)} \\
      r_{(a,b)}& := \sum\limits_{\substack{v \in V\\ v\neq a,b} }\left(\ups_{(v,a)}-\ups_{(v,b)}\right) + \ups_{(b,a)}- \ups_{(a,b)}
  \end{align}  
\end{subequations}

We define the subspaces 
\begin{subequations}\label{eq:kernelspaces}
\begin{align}
V^{(2)}_{l}:&=\mrm{Span}\left(\{l_{(a,b)}\ :\ (a,b) \in V^{(2)}\} \right)\\
V^{(2)}_{r}:&=\mrm{Span}\left(\{r_{(a,b)}\ :\ (a,b) \in V^{(2)}\} \right)
\end{align}
\end{subequations}

Using essentially the same argument as in the proof of Lemma~18 of \cite{LPSX2018}, we have the following:
\begin{enumerate}
    \item $\mrm{dim}(V^{(2)}_{l})= 2^{n}-1=\mrm{dim}(V^{(2)}_{r})$ and
    \item $\vp_{l} \cap \vp_{r}= \{0\}$.
\end{enumerate}
Observing that $M(l_{(a,b)})=0=M(r_{(a,b)})$, we observe that \[\mrm{dim}(\mrm{Ker}(M)) \geq \mrm{dim}(V^{(2)}_{l}) + \mrm{dim}(V^{(2)}_{r})\geq 2(2^{n}-1).\] Now by the rank-nullity theorem, we have 
\begin{equation}\label{eq:rankup}
 \mrm{Rank}(M(G)) =\mrm{Rank}(M) \leq (2^{n}-1)(2^{n}-2).   
\end{equation}
Let $J_{n}\in \gl(n,2)$ denote the Jordan matrix of dimension $n$ and $1$ as its eigenvalue, that is,
\[J_{n}= \begin{pmatrix}
1 & 1 & 0 & \cdots & 0& 0\\
0 & 1 & 1 & \cdots & 0 &0 \\
\vdots & \vdots & \vdots & \ddots &\vdots & \vdots\\
0 & 0 &0 & \cdots & 1 & 1 \\
0 & 0 & 0 & \cdots &0 & 1
\end{pmatrix}.\]

Let $\mbf{e}_{n}$ denote the $n$-th element of the elementary basis of $V$. We consider the element $\mbf{c}:=(J_{n},\ \mbf{e}_{n}) \in G$. For any $v \in V$, we have $\mbf{c}\cdot v = \mbf{e}_{n} + J_{n}v$. Since $\mbf{e}_{n} \notin \mrm{Im}(J_{n}-I)$, we must have $\mbf{c}\cdot v \neq v$, for all $v \in V$, that is, $\mbf{c} \in \Der(G)$. Let $\mfk{C}$ denote the set of all conjugates of $\mbf{c}$ in $G$. By $\mcal{M}$, we denote the submatrix of $M(G)$ whose rows are indexed by $\mfk{C}$ and whose columns by $V^{(2)}$.

Let $\mfk{m}$ be the linear map $\mfk{m} : \bb{C}[V^{(2)}] \to \bb{C}[\mfk{C}]$ satisfying \[\mfk{m}(\ups_{(a,b)})= \sum\limits_{t \in \mfk{C}} \mcal{M}_{t,\ (a,b)}\ups_{t} =\sum\limits_{\substack{t \in \mfk{C} \\ t(a)=b}} \ups_{t}\] Clearly, 
\begin{equation}\label{eq:rankcomprision}
\mrm{rank}(\mfk{m})= \mrm{rank}(\mcal{M}) \leq \mrm{rank}(M(G)).
\end{equation}
As $xtx^{-1}(x(a))=x(b)$, it follows that $\mfk{m}$ is in fact a $G$-module homomorphism.

 We will now recall some basic facts on group algebra, proofs of which can be found in any standard text on representation theory, such as \cite{isaacs2008character}. The group algebra is a semisimple algebra, and so by the Artin--Wedderburn theorem, it decomposes as a direct sum of simple $2$-sided ideals. These ideals can be described in terms of irreducible characters of $G$. By $\irr(G)$, we denote the set of irreducible complex characters of $G$. We denote the trivial character of $G$ by $\one$. Associated with every $\eta \in \irr(G)$, there is a central primitive idempotent $e_{\eta} := \frac{\eta(1)}{|G|} \sum\limits_{g \in G} \eta(g^{-1}) g$. . The group algebra $\bb{C}G$ decomposes into simple ideals, in the following manner:
\begin{equation}\label{eq:groupalgebrade}
\bb{C}G = \bigoplus\limits_{\eta \in \irr(G)} \bb{C}G\cdot e_{\eta}.
\end{equation} 

In the above decomposition, $\bb{C}G\cdot e_{\eta}$ is the isotypic component of $\bb{C}G$ corresponding to $\eta$. By $2$-transitivity of the action of $G$ on $V$, the permutation module $\bb{C}[V^{(2)}]$ is generated (as a $\bb{C}[G]$-module) by the element $\ups_{\mbf{0},\ \mbf{e}_{n}}$.
 Now, using \eqref{eq:groupalgebrade}, we have 
\begin{align*}
\bb{C}[V^{(2)}] & = \bigoplus\limits_{\eta \in \irr(G)} \bb{C}G\cdot e_{\eta} \ups_{\mbf{0},\ \mbf{e}_{n}} 
\end{align*} 

By the definition of $e_{\eta}$ and $2$-transitivity of $G$, we have 
\begin{align*}
e_{\eta} \ups_{\mbf{0},\ \mbf{e}_{n}} & = \frac{\eta(1)}{|G|} \sum\limits_{g \in G} \eta(g^{-1}) \ups_{g(\mbf{0}),\ g(\mbf{e}_{n})}\\
& = \frac{\eta(1)}{|G|} \sum\limits_{(a,b)\in \Omega}\left(\sum\limits_{\substack{g \in G\\ g(\mbf{0},\ \mbf{e}_{n})=(a,b)}} \eta(g^{-1}) \right) \ups_{a,b}.
\end{align*}

Given $\eta \in \irr(G)$, the $\eta$-isotypic component in $\bb{C}[V^{(2)}]$ is 
\begin{equation}\label{eq:isotypiccomponent}
\vp_{\eta}:= \bb{C}[G]\cdot e_{\eta} \ups_{\mbf{0},\ \mbf{e}_{n}}. 
\end{equation}
Using Lemma~\ref{lem:permcharsetdecom}, we have

\begin{align*}
\bb{C}[\vp] & =  \vp_{\one} \bigoplus \vp_{\psi}  \bigoplus \vp_{\theta} \bigoplus \vp_{\alpha} \bigoplus \vp_{\beta}.
\end{align*}

Using the above, we have 
\begin{align}\label{eq:imagem}
\mfk{m}\left(\bb{C}[\vp]\right) & = \mfk{m} \left( \vp_{\one} \right) \bigoplus \mfk{m} \left( \vp_{\psi} \right)  \bigoplus \mfk{m} \left(\vp_{\theta} \right) \bigoplus \mfk{m}\left( \vp_{\alpha} \right)\bigoplus \mfk{m} \left( \vp_{\beta} \right).
\end{align}

By  Lemma~\ref{lem:permcharsetdecom}, for all $\eta \in \{\one,\ \theta,\ \alpha,\ \beta \}$,  $\vp_{\eta}$ is an irreducible module of dimension $\eta(1)$.  Since $\mfk{m}$ is a $G$-module map, by Schur's lemma, for all 
$\eta \in \{\one,\ \theta,\ \alpha,\ \beta \}$, either $\mfk{m}(\vp_{\eta}) \cong \vp_{\eta}$ or $\mfk{m}(\vp_{\eta}) \cong \{0\}$. By \eqref{eq:isotypiccomponent}, since  $e_{\eta} \ups_{\mbf{0},\ \mbf{e}_{n}}$ generates $V_{\eta}$, we have $\mfk{m}(\vp_{\eta}) \neq 0$ if and only if $\mfk{m}\left(e_{\eta} \ups_{\mbf{0},\ \mbf{e}_{n}}  \right)\neq 0$.

As $\mfk{m}$ is a $G$-module map, we have 
\begin{align}\label{eq:imageiso}
\mfk{m}\left(e_{\eta} \ups_{\mbf{0},\ \mbf{e}_{n}} \right) & = e_{\eta}\mfk{m}\left( \ups_{\mbf{0},\ \mbf{e}_{n}} \right) \notag \\
& = \frac{\eta(1)}{|G|} \sum\limits_{g \in G} \eta(g^{-1}) \sum\limits_{\substack{t \in \mfk{C}\\ t(\mbf{0})= \mbf{e}_{n}}} \ups_{gtg^{-1}} \notag \\
& = \frac{\eta(1)}{|G|} \sum\limits_{g\in G} \eta(g^{-1}) \sum\limits_{\substack{t \in \mfk{C}\\ tg(\mbf{0})= g(\mbf{e}_{n})}}\ups_{t} \notag \\
& = \frac{\eta(1)}{|G|}\sum\limits_{t \in \mfk{C}} \left(\sum\limits_{\substack{g \in G\\ tg(\mbf{0})= g(\mbf{e}_{n})}} \eta(g^{-1}) \right) \ups_{t}.
\end{align}

Define $S:=\{g \in G\ :\ \mbf{c}g(\mbf{0})) = g(\mbf{e}_{n})\}$. Then the coefficient of $\ups_{\mbf{c}}$ in the above sum is $\sum\limits_{s \in S} \eta(s^{-1})$. Computing the rank of $\mfk{m}$ is now reduced to computing some character sums. In \S~\ref{sec:charactersums}, we compute these character sums later and obtain Lemma~\ref{lem:charactersums}.

Using Lemma~\ref{lem:charactersums}, \eqref{eq:imageiso}, and \eqref{eq:isotypiccomponent}, we showed that
(i) for $\eta \in \{\one,\ \theta,\ \alpha,\ \beta \}$, we have $\mfk{m}(\vp_{\eta}) \neq 0$; and (ii)$\mfk{m}(\vp_{\psi}) = 0$. Since $\mfk{m}$ is a $G$-module map, by Schur's lemma, for all 
$\eta \in \{\one,\ \theta,\ \alpha,\ \beta \}$, we have $\mfk{m}(\vp_{\eta}) \cong \vp_{\eta}$. So by \eqref{eq:rankcomprision} and \eqref{eq:imagem}, we have 

\[\mrm{Rank}(M(G))\geq \mrm{Rank}(\mfk{m}) = \sum\limits_{\eta \in \{\one,\ \theta,\ \alpha,\ \beta \}} \eta(1)= (2^{n}-1)(2^{n}-2).\]

Now, using \eqref{eq:rankup}, we have $\mrm{Rank}(M(G))=(2^{n}-1)(2^{n}-2)$. Theorem~\ref{thm:agln2sker} now follows from Theorem~\ref{thm:modulemethod}.
\section{Proof of Theorem~\ref{thm:agln2stablity}.}\label{sec:stability}
We start by establishing some notation. Let $G$ denote $\agl(n,2)$ and $\bb{C}^{G}$ be the group algebra of $\bb{C}$-valued functions on $G$. By $\Der(G)$, we denote the set of derangements in $G$ and let $\de_{G}:=|\Der(G)|$. 
Given $S\subset V$, let $\ind_{S} \in \bb{C}^{G}$ denote the indicator function of $S$ in $G$. Given $x,y \in V$, by $\ind_{x,y}$, denote the indicator function of $\I_{x,y}:=\{g \in G\: g(x)=y\}.$ Given $\eta \in \irr(G)$, let $U_{\eta}$ be the $\eta$-isotypic component in $\bb{C}^{G}$, and let $\lambda_{\eta}:= \dfrac{\eta(1)}{|G|} \sum\limits_{d \in \Der(G)} \eta(d)$. From Lemma~\ref{lem:spec}, $\{\lambda_{\eta}\ :\ \eta \in \irr(G)\}$ is the set of eigenvalues of the derangement graph $\Gamma_{G}$. We set $U_{G}:=U_{\one}+U_{G}$.

Let $\psi \in \irr(G)$ be such that $\one+\psi$ is the permutation character associated with the action of $G$ on $V$. Given $g \in G$, if $\mrm{fix}(g)$ denotes the number of points in $V$ fixed by $g$, then $\psi(g)=\mrm{fix}(g)-1$. It now follows that
\begin{equation}\label{eq:stdevalue}
\lambda_{\psi}= \dfrac{-\de_{G}}{2^{n}-1}\ \ \&\ \ \lambda_{\one}=\de_{G}.  
\end{equation}
We first show that $\lambda_{\psi}$ is the least eigenvalue of $G$ and that $U_{\psi}$ is the $\lambda_{\psi}$-eigenspace. We also find a bound on the size of the second-smallest eigenvalue. Let $A_{G}$ denote the adjacency matrix of the derangement graph $\Gamma_{G}$.
To help us find the least and second-smallest eigenvalues of $A_{G}$, we now find some bounds on eigenvalues. We know that $\mrm{Tr}(A^{2}_{G})=2|E(\Gamma_{G})|=\de_{G}|G|$. From Lemma~\ref{lem:spec}, we have 
\begin{align*}
\de_{G}|G| &= \mrm{Tr}(A^{2}_{G}) = \sum\limits_{\chi \in \mrm{Irr}(G)} \left(\chi(1)\right)^{2}\lambda^{2}_{\chi} 
\end{align*}
Setting $\pr_{G}={d_{G}}/{|G|}$, from the above equation, for all $\chi \in \mrm{Irr}(G)$, we have 
\begin{equation}\label{eq:eigenbound}
|\lambda_{\chi}| \leq \frac{\de_{G}}{\chi(1)\sqrt{\pr_{G}}}.
\end{equation}

 To obtain information on degrees of irreducible characters of $G$, we use the following result from \cite{tiep1996minimal} on the degrees of irreducible characters of $\gl(n,2)$.
\begin{theorem}(Theorem 1.1 of \cite{tiep1996minimal}):
Let $n \geq 5$ and let \[1=d_{0}<d_{1}<d_{2}<\cdots <d_{l}\] be the character degrees of $\gl(n,2)$, then
\begin{enumerate}
\item $d_{1}= 2^{n}-2$, $d_{2}=\begin{cases} \dfrac{(2^{n}-1)(2^{n-1}-4)}{3}\ \text{if $n\neq 6$}\\ 217\ \text{otherwise} \end{cases}$; and
\item there is a unique character of degree $d_{1}$.
\end{enumerate}
\end{theorem}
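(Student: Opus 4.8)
The statement is Tiep's determination of the two smallest nontrivial character degrees of $\gl(n,2)$, so the plan is to reduce everything to the explicit description of $\irr(\gl(n,q))$ and then minimize the resulting degree formulas. The key simplification at $q=2$ is that $q-1=1$, so $\gl(n,2)=\mathrm{SL}(n,2)=\psl(n,2)$ is simple for $n\geq 3$ and has no nontrivial linear characters; thus $\one$ is the only character of degree $d_{0}=1$, and every other degree is at least the minimal nontrivial one. First I would invoke the Deligne--Lusztig (Lusztig-series) classification: each $\eta\in\irr(\gl(n,q))$ belongs to a series $\mcal{E}(\gl(n,q),s)$ indexed by a semisimple class $s$ in the dual group, and $\eta(1)=[\gl(n,q):C(s)]_{p'}\cdot u(1)$ for a unipotent character $u$ of the centralizer $C(s)$. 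For $\gl$, the unipotent characters of $\gl(m,q)$ are indexed by partitions $\lambda\vdash m$, with degree given by the $q$-analogue of the hook-length formula $q^{n(\lambda)}\prod_{i=1}^{m}(q^{i}-1)\big/\prod_{c\in\lambda}(q^{h(c)}-1)$, where $n(\lambda)=\sum_{i}(i-1)\lambda_{i}$.

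The next step is to handle the principal (unipotent) series, where $s=1$ and $C(s)=\gl(n,q)$. Evaluating the hook formula gives $\lambda=(n)\mapsto\one$, while $\lambda=(n-1,1)$ yields $q\,\frac{q^{n-1}-1}{q-1}$, which for $q=2$ equals $2^{n}-2$: this is exactly the nontrivial constituent of the permutation action on the $2^{n}-1$ nonzero vectors. The two next-smallest partitions are $(n-2,2)$ and $(n-2,1,1)$; a direct comparison of the hook formulas at $q=2$ gives degrees $\frac{(2^{n}-1)(2^{n-1}-4)}{3}$ and $\frac{8(2^{n-1}-1)(2^{n-2}-1)}{3}$ respectively, the former being strictly smaller, so the second-smallest unipotent degree is $\frac{(2^{n}-1)(2^{n-1}-4)}{3}$, matching the generic value of $d_{2}$. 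Part (2), the uniqueness of $d_{1}=2^{n}-2$, follows within this series because no other partition attains it, and will be completed once the non-unipotent series are shown to lie strictly above $2^{n}-2$.

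It then remains to bound the non-unipotent series from below. For $s\neq 1$ the centralizer $C(s)$ is a proper connected reductive subgroup, so the index $[\gl(n,q):C(s)]_{p'}$ is divisible by a large cyclotomic factor; using the list of possible centralizer types (products $\prod_{i}\gl(m_{i},q^{d_{i}})$ with $\sum_{i}d_{i}m_{i}=n$) together with Landazuri--Seitz--Zalesskii-type lower bounds on the accompanying unipotent factor, one shows that every such degree exceeds $\frac{(2^{n}-1)(2^{n-1}-4)}{3}$ for $n\geq 5$, with the single exception $n=6$. The anomaly at $n=6$ is genuine: a non-principal-series character of degree $217=(2^{3}-1)(2^{5}-1)$ beats the generic second-smallest unipotent degree $588$, and I would confirm this, together with the value $d_{2}=217$, directly from the known character table of $\gl(6,2)$.

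The main obstacle is precisely this uniform lower bound for the non-unipotent series: establishing that every character attached to a nontrivial semisimple $s$ has degree above $d_{2}$ requires careful bookkeeping of the index $[\gl(n,q):C(s)]_{p'}$ over all admissible centralizer types, and, because we are chasing the \emph{second}-smallest degree rather than merely the smallest, the estimates must be sharp enough to detect and isolate the exceptional configuration at $n=6$. Pinning down that $n=6$ is the only exception, and that the small character there has degree exactly $217$, is the delicate part of the argument.
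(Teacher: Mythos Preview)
The paper does not prove this statement at all: it is quoted verbatim as Theorem~1.1 of \cite{tiep1996minimal} and used as a black box to feed into Corollary~\ref{cor:chardegaffine}. There is nothing in the paper to compare your argument against.

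Your sketch is a reasonable outline of how results of this type are actually established---parametrise $\irr(\gl(n,q))$ by Lusztig series, compute the small unipotent degrees via the $q$-hook formula (your values for $(n-1,1)$ and $(n-2,2)$ are correct), and then bound the non-principal series from below via the index $[\gl(n,q):C(s)]_{p'}$. However, as you yourself flag, the substance of the theorem lies entirely in the last step: proving that \emph{every} character in a non-trivial Lusztig series has degree exceeding $\frac{(2^{n}-1)(2^{n-1}-4)}{3}$ for $n\geq 5$, $n\neq 6$, and isolating the $n=6$ anomaly. You describe this as ``the delicate part'' and leave it as an obstacle rather than carrying it out, so what you have written is a plan, not a proof. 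If you intend to include a self-contained argument rather than a citation, that case analysis over centraliser types $\prod_{i}\gl(m_{i},q^{d_{i}})$ must be executed in full; otherwise, simply citing Tiep as the paper does is appropriate.
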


The group $K$ is isomorphic to $\agl(m-1,2)$. So Lemma~\ref{lem:affineirr} builds the irreducible characters of $G$, recursively.
Using induction and the above two results, we obtain the following result.

\begin{cor}\label{cor:chardegaffine}
Let $n\geq 5$ and $n\neq 6$. If $1=r_{0}<r_{1}<r_{2}\ldots<r_{l}$ are the character degrees of $G$, then
\begin{enumerate}
\item $r_{1}=2^{n}-2$ and there is a unique character of degree $r_{1}$;
\item $r_{2}=2^{n}-1$ and there is a unique character of degree $r_{2}$; and
\item $r_{3}= \begin{cases} \dfrac{(2^{n}-1)(2^{n-1}-4)}{3}\ \text{if $n\neq 6$}\\ 217\ \text{otherwise.} \end{cases}$
\end{enumerate}
\end{cor}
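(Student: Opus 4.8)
The plan is to obtain every character degree of $G=\agl(n,2)$ from Lemma~\ref{lem:affineirr} and then sort the small ones. By that lemma $\irr(G)$ is the disjoint union of the \emph{inflated} characters $\tilde\phi$, $\phi\in\irr(\gl(n,2))$, of degree $\phi(1)$ (these contain $V$ in their kernel), and the \emph{induced} characters $\mrm{Ind}^{G}_{G_{0,\delta}V}(\delta\otimes\bar\rho)$, $\rho\in\irr(\agl(n-1,2))$ (these do not contain $V$ in their kernel, so the two families are genuinely disjoint). Since $G_{0}=\gl(n,2)$ is transitive on the $2^{n}-1$ nontrivial characters of $V$, we have $[G:G_{0,\delta}V]=[G_{0}:G_{0,\delta}]=2^{n}-1$, so an induced character has degree $(2^{n}-1)\,\rho(1)$. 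Hence the degree list of $G$ is the degree list of $\gl(n,2)$ together with $2^{n}-1$ times the degree list of $\agl(n-1,2)$.

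On the inflated side, the cited theorem of Tiep gives, for $n\geq 5$ with $n\neq 6$, that the degrees of $\gl(n,2)$ are $1<2^{n}-2<\tfrac{(2^{n}-1)(2^{n-1}-4)}{3}<\cdots$, with a unique character of degree $2^{n}-2$. On the induced side, the smallest degree is $(2^{n}-1)\cdot 1=2^{n}-1$, coming from $\rho=\one$; and $\agl(n-1,2)$ is perfect for $n-1\geq 3$ --- its derived subgroup contains the simple group $\gl(n-1,2)$ and also $V$, because $\{(M-I)v: M\in\gl(n-1,2),\,v\in V\}$ spans the irreducible module $V$ --- so $\one$ is its only linear character and $2^{n}-1$ arises from a unique induced character. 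The next induced degree is $(2^{n}-1)f$, where $f$ is the smallest nontrivial degree of $\agl(n-1,2)$; since every induced character of $\agl(n-1,2)$ has degree $\geq 2^{n-1}-1$, $f$ equals the smallest nontrivial degree of $\gl(n-1,2)$, namely $2^{n-1}-2$ for $n-1\geq 5$ (Tiep, including the exceptional $n-1=6$, where still $d_{1}=2^{6}-2$) and $7$ for $n-1=4$ (as $\gl(4,2)\cong\mrm{Alt}(8)$). A one-line check shows $f>\tfrac{2^{n-1}-4}{3}$ in each case, hence $(2^{n}-1)f>\tfrac{(2^{n}-1)(2^{n-1}-4)}{3}$.

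Collating the two lists and using the elementary inequalities $2^{n}-2<2^{n}-1<\tfrac{(2^{n}-1)(2^{n-1}-4)}{3}$ (valid for $n\geq 4$), one gets that the three smallest nontrivial degrees of $G$ are $2^{n}-2$ (inflated, unique), $2^{n}-1$ (induced, unique), and $\tfrac{(2^{n}-1)(2^{n-1}-4)}{3}$ (inflated), which is exactly the assertion; the propagation of this statement from $\agl(n-1,2)$ to $\agl(n,2)$ may be phrased as an induction, but the genuine inputs are only Lemma~\ref{lem:affineirr} and Tiep's theorem. I expect the only delicate point to be the bookkeeping around small $n$: although $\gl(4,2)\cong\mrm{Alt}(8)$ carries a character of degree $7$ that violates the ``$2^{m}-2$'' pattern, it enters the degrees of $\agl(5,2)$ only through the factor $2^{5}-1=31$, landing at $217>124=\tfrac{(2^{5}-1)(2^{4}-4)}{3}$ and leaving $r_{3}$ undisturbed; the excluded case $n=6$, where Tiep's $d_{2}=217$, goes through identically and gives $r_{3}=217$.
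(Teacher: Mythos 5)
Your proof is correct and takes essentially the same route as the paper, which derives the corollary by building $\irr(\agl(n,2))$ recursively from Lemma~\ref{lem:affineirr} (inflated characters of $\gl(n,2)$ plus characters of degree $(2^{n}-1)\rho(1)$ induced from the stabilizer $\cong\agl(n-1,2)\ltimes V$) and combining this with Tiep's theorem via induction. You have merely written out the bookkeeping the paper leaves implicit (the index $2^{n}-1$, perfectness of $\agl(m,2)$, and the $\gl(4,2)\cong\mrm{Alt}(8)$ and $n=6$ special cases), all of which checks out.
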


Assume that $n\geq 5$ and $n\neq 6$. From the above corollary, $\psi$ is the unique character of $G$ of degree $2^{n}-1$. Then by \eqref{eq:stdevalue}, we have 
\begin{equation}\label{eq:stdevalueaffine}
\lambda_{\psi}= -\de_{G}/(2^{n}-1).
\end{equation}

Let $\theta \in \irr(G)$ be the character of degree $2^{n}-2$ as defined in \S~\ref{sec:characters}.  We have $\theta(M,v)=|\mrm{Ker}(M-I)|-2$, for all $(M,v) \in G$. By Corollary~\ref{cor:chardegaffine}, $\theta$ is the unique character of degree $2^{n}-2$. We now consider $\lambda_{\theta}$. We observe that $(M,v) \in \Der(G)$ if and only if $v \notin \mrm{Im}(M-I)$. For all $M \in \gl(n,2)$, note that $\theta(M) \geq 0$ if and only if $\mrm{Im}(M-I)\neq V$. Therefore, we have $\theta(N,x)\geq 0$ for all $(N,x)\in \Der(G)$. We also note that $(I,v)\in \Der(G)$ and that $\theta((I,v))=2^{n}-2$. We have
\begin{equation}\label{eq:theta}
\lambda_{\theta}\gneqq 0.
\end{equation}

 By Corollary~\ref{cor:chardegaffine}, for $\chi \in \irr{G}\setminus\{\one,\ \theta,\ \psi\}$,  we have 
\[\chi(1) \geq \psi(1) \frac{2^{n-1}-4}{3}.\] Now using \eqref{eq:eigenbound} and \eqref{eq:stdevalueaffine}, for all $\chi \in \irr(G) \setminus \{\one,\ \theta,\psi \}$
\begin{equation}\label{eq:eigenboundaffine0}
|\lambda_{\chi}| \leq \frac{3|\lambda_{\psi}|}{(2^{n-1}-4)\sqrt{\pr_{G}}}.
\end{equation}
From Theorem 1.1 of \cite{spiga2017number}, for $n\geq 5$, we have 
\[\pr_{G}=\frac{|D(G)|}{|G|}= \sum\limits_{i=1}^{n} \frac{(-1)^{i-1}}{2^{i(i+1)/2}} \geq \frac{1}{2}-\frac{1}{8}=\frac{3}{8}.\]
Now, using \eqref{eq:eigenboundaffine0}, we have 
$\chi \in \irr(G) \setminus \{\one,\ \theta,\psi \}$
\begin{equation}\label{eq:eigenboundaffine}
|\lambda_{\chi}| \leq \mrm{min}\left(\frac{|\lambda_{\psi}|}{2^{n-6}},\ \frac{|\lambda_{\psi}|}{2}\right).
\end{equation}
The above result is true for $n\geq 5$ and $n\neq 6$. With the help of a computer, we can verify that it is true for all $n\geq 4$. 

Let $\lambda$ be the smallest eigenvalue of $\Gamma_{G}$. Using, Lemma~\ref{lem:spec}, \eqref{eq:stdevalueaffine}, \eqref{eq:eigenboundaffine}, and \eqref{eq:theta}, we see that $\lambda=-\frac{\de_{G}}{2^{n}-1}$ and that $\mrm{Ker}(A_{G}-\lambda I)=U_{\psi}$. Let $\mu$ be the second-smallest eigenvalue. If $\lambda_{\theta}$ also satisfies the inequality \eqref{eq:eigenboundaffine}, then $|\mu|$ is also bounded by the right-hand side of \eqref{eq:eigenboundaffine}. If $\lambda_{\theta} \gneqq \left(\frac{|\lambda_{\psi}|}{2^{n-6}},\ \frac{|\lambda_{\psi}|}{2}\right)$, then by \eqref{eq:eigenboundaffine0}, $\lambda_{\theta}$ cannot be the second-smallest eigenvalue. Therefore, we have 
\[|\mu|\leq \mrm{min}\left(\frac{|\lambda_{\psi}|}{2^{n-6}},\ \frac{|\lambda_{\psi}|}{2}\right).\] Now applying Theorem~\ref{thm:stabilityratiobound}, we have
for any intersecting set $S$ of size $c|\gl(n,2)|$, we have 
\begin{equation}\label{eq:distanceboundaffine}
||\ind_{S}- \mrm{Proj}_{U_{G}}(\ind_{S})||^{2} \leq (1+O(1/2^{n-6}))(1-c)\frac{c}{2^{n}-1},
\end{equation}
where $U_{G}:=U_{\one}+U_{\psi}$.

 Plaza (cf. Proposition~10 of \cite{plaza2015stability}) observed that the main result of \cite{ellis2015quasi} result extends to all $3$-transitive groups. Using Plaza's observation, we have the following characterization of Boolean functions in $\bb{C}^{G}$ which are ``close'' in Euclidean distance to the subspace $U_{G}$.

\begin{theorem}\label{thm:EFF}(Proposition 10 of \cite{plaza2015stability}) There exists absolute constants $C_{0}$ and $\epsilon_{0}\in [0, \frac{1}{2}]$ such that the following holds. Let $S \subset G$ with $|S|= \frac{c|G|}{2^{n}}$ with $c \in (\frac{1}{2}, 1]$. If \[||\ind_{S}-\Proj_{U_{G}}(\ind_{S})||^2 \leq \frac{\epsilon c}{2^{n}},\] where $\epsilon \leq \epsilon_{0}$, then there is a canonical intersecting set $T$ in $G$ such that
\[|S \Delta T| \leq C_{0}c^{2} \frac{|G|}{2^{n}}\left( \sqrt{\epsilon} + \frac{1}{2^{n}} \right).\]
\end{theorem}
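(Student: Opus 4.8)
The plan is to follow the quasi-stability method of Ellis (\cite{ellis2015quasi}) as adapted by Plaza (\cite{plaza2015stability}), supplying the representation-theoretic data for $G:=\agl(n,2)$ assembled in \S~\ref{sec:characters} and \S~\ref{sec:stability}. Set $N:=2^{n}$ and $f:=\ind_{S}$. The first step is to encode $f$ by its ``degree-$1$ part.'' For $x,y\in V$ put
\[B_{x,y}:=|S\cap\I_{x,y}|=\sum_{g\in S}\ind_{x,y}(g).\]
Since each $g$ sends $x$ to a unique image and a unique preimage to $y$, the matrix $B$ is doubly balanced: all its row sums and all its column sums equal $|S|=cN^{-1}|G|$. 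Because $\mrm{Span}\{\ind_{x,y}\}=U_{G}=U_{\one}\oplus U_{\psi}$, the vector $\Proj_{U_{G}}(f)$ is determined by $B$. Using the central idempotents $e_{\one},e_{\psi}$ together with $\psi(g)=\mrm{fix}(g)-1$, one computes, for the $\eta$-isotypic projection in $\bb{C}^{G}$,
\[\|\Proj_{U_{\eta}}(f)\|^{2}=\frac{\eta(1)}{|G|^{2}}\sum_{s,s'\in S}\eta(s^{-1}s'),\]
and the key bridge $\sum_{s,s'\in S}\mrm{fix}(s^{-1}s')=\sum_{x,y}B_{x,y}^{2}$ (both sides count triples $(s,s',x)$ with $s(x)=s'(x)$). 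Applying this to $\eta\in\{\one,\psi\}$ gives the closed form
\[\|f-\Proj_{U_{G}}(f)\|^{2}=\frac{c}{N}-\frac{c^{2}}{N^{2}}-\frac{N-1}{|G|^{2}}\left(\sum_{x,y}B_{x,y}^{2}-|S|^{2}\right).\]

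The distance hypothesis $\|f-\Proj_{U_{G}}(f)\|^{2}\le \epsilon c/N$ turns this identity into a lower bound on the energy $E:=\sum_{x,y}B_{x,y}^{2}$, namely $E\ge 2|S|^{2}\bigl(1-O(\epsilon)-O(1/N)\bigr)$. Equivalently, since $E=\sum_{s,s'\in S}\#\{x:s(x)=s'(x)\}$, the average number of agreements between two members of $S$ is at least $2-o(1)$; as $S$ is intersecting each pair already agrees somewhere, so the ``excess'' agreement must be explained by a common canonical coset. Indeed $E=2|S|^{2}$ is exactly the energy of a single coset $\I_{x_{0},y_{0}}$ (row $x_{0}$ concentrated at $y_{0}$, every other row uniform), and this is the profile toward which the bound pushes $B$. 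Thus the hypothesis forces $B$ to be $L^{2}$-concentrated on one entry $B_{x_{0},y_{0}}$.

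The remaining and \textbf{main obstacle} is to upgrade this $L^{2}$-concentration of the degree-$1$ part into an $L^{1}$ (symmetric-difference) statement: that $S$ itself is close to the single coset $T=\I_{x_{0},y_{0}}$. This is where Booleanness of $f$ must be used beyond the second moment. Following the junta/quasi-random argument of \cite{ellis2015quasi}, write $h=\Proj_{U_{G}}(f)$; since $f$ is $\{0,1\}$-valued and $\|f-h\|$ is small, $h$ takes values near $\{0,1\}$ off a set of small measure, and a degree-$1$ function with this property whose coefficient matrix $B$ is concentrated as above must agree, up to small symmetric difference, with the indicator of a single coset. The quantitative version loses a square root, producing the $\sqrt{\epsilon}$ in the conclusion. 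The inputs needed to run this machinery for $G$ are precisely (i) the spectral gap that $U_{\psi}$ is the least-eigenvalue space with the next eigenvalue controlled as in \eqref{eq:eigenboundaffine}, and (ii) the character-degree separation of Corollary~\ref{cor:chardegaffine}; both were established in \S~\ref{sec:stability}, and Plaza's observation is exactly that these suffice for the Ellis argument to run for any $3$-transitive group. Uniqueness of the dominant pair $(x_{0},y_{0})$ uses the hypothesis $c>\tfrac12$ (two comparably popular cosets would each carry measure $\approx\tfrac{1}{2N}$ and cannot coexist with $|S|>\tfrac{|G|}{2N}$), while an ``anti-dictator'' (the complement of a coset) is ruled out because $|S|=cN^{-1}|G|\le N^{-1}|G|$ is small.

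Finally I assemble the estimate. With $T=\I_{x_{0},y_{0}}$ we have $|T|=|G|/N$ and $|S\cap T|=B_{x_{0},y_{0}}\ge\bigl(1-O(\sqrt{\epsilon})\bigr)|S|$ from the core step, and $\bigl||S|-|T|\bigr|\le|S\triangle T|$ then forces $c=1-O(\sqrt{\epsilon})$. Substituting into $|S\triangle T|=|S|+|T|-2|S\cap T|$ and collecting error terms (the $O(1/N)$ corrections arising from the ``$-1$'' in $\psi=\mrm{fix}-1$ and from the $\tfrac{N-1}{N}$-type factors) yields
\[|S\triangle T|\le C_{0}\,c^{2}\,\frac{|G|}{N}\left(\sqrt{\epsilon}+\frac{1}{N}\right)\]
for suitable absolute $C_{0}$ and threshold $\epsilon_{0}$, which is the assertion of Theorem~\ref{thm:EFF}.
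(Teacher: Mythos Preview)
The paper does not prove Theorem~\ref{thm:EFF} at all: it is quoted verbatim as Proposition~10 of \cite{plaza2015stability}, with the remark that Plaza observed the Ellis argument of \cite{ellis2015quasi} goes through for any $3$-transitive group. So there is no ``paper's own proof'' to compare against; the intended argument is simply the citation.

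Your sketch is a plausible outline of how the Ellis--Plaza machinery works, but it contains two genuine slips. First, you invoke the hypothesis ``as $S$ is intersecting each pair already agrees somewhere'' to interpret the energy bound, but Theorem~\ref{thm:EFF} does \emph{not} assume $S$ is intersecting; it applies to an arbitrary subset $S\subset G$ of the stated size satisfying the projection-distance hypothesis. The intersecting property enters only later in the paper, when Theorem~\ref{thm:stabilityratiobound} is used to \emph{verify} that hypothesis for intersecting sets. Second, and relatedly, you list the spectral gap \eqref{eq:eigenboundaffine} and the character-degree separation of Corollary~\ref{cor:chardegaffine} as ``inputs needed to run this machinery,'' but those are inputs to the \emph{application} of Theorem~\ref{thm:EFF} (establishing the distance bound \eqref{eq:distanceboundaffine}), not to its proof. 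The proof of Theorem~\ref{thm:EFF} itself requires only that $U_{G}$ is spanned by the indicators $\ind_{x,y}$ and that $G$ is $3$-transitive (so that the Ellis junta argument on the Boolean function $\ind_{S}$ with small degree-$>1$ part goes through); the spectral data of $\Gamma_{G}$ plays no role once the projection hypothesis is assumed.
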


As a corollary, we have the following result.

\begin{cor}\label{cor:affinesymdiff}
Given $\phi>0$, there exists $c(\phi) \in (1/2,\ 1)$ and such that the following is true. If $n\geq n(\phi)$ and $S \subset \agl(n,2)$ is an intersecting set of size $c|\gl(n,2)|$ with $c\geq c(\phi)$, then there is a canonical intersecting set $T$ such that 
\[|S \Delta T| \leq \phi c^{2} \frac{|\agl(n,2)|}{2^{n}-1}.\] 
\end{cor}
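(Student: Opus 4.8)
The plan is to derive Corollary~\ref{cor:affinesymdiff} directly from Theorem~\ref{thm:EFF} by checking that the hypotheses of that theorem are met whenever $c$ is close enough to $1$ and $n$ is large enough. First I would fix $\phi>0$ and record that Theorem~\ref{thm:EFF} supplies absolute constants $C_{0}$ and $\epsilon_{0}\in[0,\tfrac12]$. The bridge to its hypothesis is the distance estimate \eqref{eq:distanceboundaffine}: for an intersecting set $S\subseteq\agl(n,2)$ with $|S|=c|\gl(n,2)|$ (equivalently $|S|=c|G|/2^{n}$, since $|\agl(n,2)|=2^{n}|\gl(n,2)|$), we have
\[
\|\ind_{S}-\Proj_{U_{G}}(\ind_{S})\|^{2}\leq\bigl(1+O(1/2^{n-6})\bigr)(1-c)\frac{c}{2^{n}-1}.
\]
So it suffices to choose $c(\phi)$ and $n(\phi)$ so that the right-hand side is at most $\tfrac{\epsilon c}{2^{n}}$ for some $\epsilon\leq\epsilon_{0}$. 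Since $\frac{1}{2^{n}-1}\leq\frac{2}{2^{n}}$ for $n\geq1$ and the $O(1/2^{n-6})$ factor is bounded by $2$ for $n$ beyond some absolute threshold, the left side is at most $4(1-c)\cdot\frac{c}{2^{n}}$, so taking $\epsilon:=4(1-c)$ works as long as $4(1-c)\leq\epsilon_{0}$, i.e.\ $c\geq 1-\epsilon_{0}/4$. Setting $c(\phi)$ to be at least this value (and $>1/2$) makes Theorem~\ref{thm:EFF} applicable with this $\epsilon$.

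Having applied Theorem~\ref{thm:EFF}, I get a canonical intersecting set $T$ with
\[
|S\Delta T|\leq C_{0}c^{2}\frac{|G|}{2^{n}}\Bigl(\sqrt{\epsilon}+\frac{1}{2^{n}}\Bigr)
= C_{0}c^{2}\,|\gl(n,2)|\Bigl(\sqrt{4(1-c)}+\frac{1}{2^{n}}\Bigr),
\]
using $|G|/2^{n}=|\gl(n,2)|$. To match the desired conclusion $|S\Delta T|\leq\phi c^{2}\frac{|\agl(n,2)|}{2^{n}-1}=\phi c^{2}\,\frac{2^{n}}{2^{n}-1}|\gl(n,2)|$, and noting $\frac{2^{n}}{2^{n}-1}\geq1$, it is enough to ensure
\[
C_{0}\Bigl(2\sqrt{1-c}+\frac{1}{2^{n}}\Bigr)\leq\phi.
\]
This is achieved by first shrinking $1-c(\phi)$ so that $2C_{0}\sqrt{1-c(\phi)}\leq\phi/2$, and then taking $n(\phi)$ large enough that $C_{0}/2^{n(\phi)}\leq\phi/2$. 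Both choices are explicit in terms of $C_{0}$, $\epsilon_{0}$, and $\phi$, and we take $c(\phi)=\max\{1/2+\delta,\,1-\epsilon_{0}/4,\,1-\phi^{2}/(16C_{0}^{2})\}$ for a small fixed $\delta$, and $n(\phi)$ the maximum of the finitely many thresholds that appeared.

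The steps in order are: (1) invoke \eqref{eq:distanceboundaffine} and absorb the $O(1/2^{n-6})$ factor into an absolute constant for $n$ past a threshold; (2) pick $\epsilon=4(1-c)$ and impose $c\geq1-\epsilon_{0}/4$ so Theorem~\ref{thm:EFF} applies; (3) feed the resulting symmetric-difference bound through the identity $|\agl(n,2)|=2^{n}|\gl(n,2)|$; (4) impose the two smallness conditions on $1-c$ and on $1/2^{n}$ to turn $C_{0}(2\sqrt{1-c}+2^{-n})$ into $\phi$. I do not expect a genuine obstacle here — the argument is a bookkeeping chase of constants — but the one point requiring slight care is keeping the normalizations straight: Theorem~\ref{thm:EFF} states sizes relative to $|G|/2^{n}$ whereas the corollary states them relative to $|\gl(n,2)|$ and $\frac{|\agl(n,2)|}{2^{n}-1}$, so one must consistently use $|G|=|\agl(n,2)|=2^{n}|\gl(n,2)|$ and the harmless inequality $\frac{2^{n}}{2^{n}-1}\geq 1$ in the final comparison. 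One should also note in passing that $c(\phi)>1/2$ is needed so that the hypothesis $c\in(1/2,1]$ of Theorem~\ref{thm:EFF} holds, which is automatic once $\phi$ is small but should be enforced by the $\max$ defining $c(\phi)$ for larger $\phi$.
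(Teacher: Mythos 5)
Your proposal is correct and takes essentially the same route as the paper: feed the distance estimate \eqref{eq:distanceboundaffine} into Theorem~\ref{thm:EFF} with $\epsilon$ of order $1-c$, then choose $c(\phi)$ and $n(\phi)$ so that $C_{0}\bigl(\sqrt{\epsilon}+2^{-n}\bigr)\leq\phi$, using $|\agl(n,2)|=2^{n}|\gl(n,2)|$ to reconcile the normalizations. The only differences are bookkeeping (you track the $O(1/2^{n-6})$ factor and the $2^{n}$ versus $2^{n}-1$ denominators explicitly, which is if anything slightly more careful than the paper's choice of constants).
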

\begin{proof}
Let $C_{0}$ and $\epsilon_{0}$ be the constants in Theorem~\ref{thm:EFF}. We first pick $0<c_{1}<1$ such that 
\begin{equation}\label{eq:ccond}   
2(1-c_{1})\lneqq \mrm{min}(\{\frac{\phi^{2}}{C^{2}_{0}},\ \epsilon_{0}\}).\end{equation} 
Next, we pick $n(\phi)$ such that
\begin{equation} \label{eq:ncond}
\frac{1}{2^{n(\phi)}} \leq \frac{\phi}{C_{0}} -\sqrt{2(1-c_{1})}.\end{equation}

Let $n\geq n(\phi)$ and $S$ be an intersecting set in $G:=\agl(n,2)$ of size $c|\gl(n,2)|$, where $c\geq c_{1}$.
From \eqref{eq:distanceboundaffine} and \eqref{eq:ccond}, we have 
 \[ ||\ind_{S}- \mrm{Proj}_{U_{G}}||^{2} \leq  2(1-c)\frac{c}{2^{n}-1} \leq \frac{\epsilon_{0}c}{2^{n}-1}.\]
 
Application of  Theorem~\ref{thm:EFF} yields that there is a canonical intersecting set $T$ such that 
\[|S \Delta T| \leq \frac{c^{2}|G|}{2^{n}}C_{0}\left(\sqrt{2(1-c)}+ \frac{1}{2^{n}}\right) \leq \frac{c^{2}|G|}{2^{n}}C_{0}\left(\sqrt{2(1-c_{1})}+ \frac{1}{2^{n}}\right).\]
Since $n\geq n(\phi)$, using \eqref{eq:ncond}, we have \[S\Delta T \leq \phi c^{2} \frac{|G|}{2^{n}-1}.\]

Let $c_{2} \in (1/2, 1)$ be such that $c_{2}|\gl(n,2)|> |\gl(n,2)|-1$, for all $n<n(\phi)$. We set $c(\phi) := \mrm{max}(\{c_{1},c_{2}\})$. From the above, the result is true for all $n\geq n(\phi)$. In the case $n<n(\phi)$, the result follows from Theorem~\ref{thm:agln2sker}. 
\end{proof}
We are now ready to prove our main result
\paragraph{Proof of Theorem~\ref{thm:agln2stablity}:}  Choose $\phi=\frac{3}{8}$. Let $c(\phi)$ in the above result. Let $c \geq c(\phi)$ and let $S$ be an intersecting set in $\agl(n,2)$ of size $c|\agl(n,2)|/(2^{n}-1)$. By Corollary~\ref{cor:affinesymdiff}, there exists a canonical intersecting set $T$ with $|S\Delta T| \leq \frac{3}{8}|T|$. We claim that $S \subset T$. Assume the contrary. In the derangement graph $\Gamma_{G}$, by Corollary~\ref{cor:ratiobound}, any $s$ in $S\setminus T$ has exactly $\dfrac{\de_{G}}{2^{n}-1}$ neighbours in $T$. Thus, since $S$ is an independent set, $|T\setminus S| \geq \frac{\de_{G}}{2^{n}-1} $. Noting that $\de_{G}= \pr_{G}|G| \geq \frac{3|G|}{8}$, we have 
 \begin{align*}
 |S \Delta T| -|T \setminus S| & \leq \frac{3|G|}{8(2^{n})} - \frac{\de_{G}}{2^{n}-1}\\
 & \leq \frac{3|G|}{8(2^{n})} - \frac{3|G|}{8(2^{n}-1)} <0.
\end{align*}   
This is absurd and so our assumption that $S \not\subseteq T$ is false. Thus $S\subset T$ and this proves the result. \qed

\section{Proof of Lemma~\ref{lem:charactersums}}\label{sec:charactersums}

We first collect some auxiliary results on character sums. The following is a well known result (see for instance Lemma~4.1 of \cite{isaacs2008character}).

\begin{lemma}\label{lem:zerosum}
Let $L$ be a finite group and $P:L \to \gl_{n}(\bb{C})$ be an $n$-dimensional representation affording the character $\zeta$. If $\left\langle \zeta,\ \one \right\rangle=0$, then
$\sum\limits_{l \in L}P(l)=0$.
\end{lemma}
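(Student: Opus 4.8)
The plan is to analyze the operator $T := \sum_{l \in L} P(l) \in \gl_n(\bb{C})$ directly, showing it vanishes by identifying $\tfrac{1}{|L|}T$ as the projection onto the space of $L$-fixed vectors and then computing the dimension of that space via the hypothesis.

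First I would record the invariance of $T$: for every $h \in L$, re-indexing the summation gives $P(h)\,T = \sum_{l \in L} P(hl) = T$, and similarly $T\,P(h) = T$. Summing the first identity over $h$ yields $T^2 = \sum_{h \in L} P(h)\,T = |L|\,T$, so $e := \tfrac{1}{|L|}T$ is an idempotent. Next I would verify that $e$ is exactly the projection onto $W^L := \{\, w \in \bb{C}^n : P(l)w = w \text{ for all } l \in L \,\}$: from $P(h)(ev) = ev$ for all $v \in \bb{C}^n$ we get $\operatorname{Im}(e) \subseteq W^L$, while for $w \in W^L$ one has $ew = \tfrac{1}{|L|}\sum_{l} P(l)w = w$, so $e$ restricts to the identity on $W^L$; hence $\operatorname{Im}(e) = W^L$ and $e|_{W^L} = \operatorname{id}$.

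Then I would compute $\operatorname{rank}(e)$ through its trace. Since $e$ is idempotent, $\operatorname{rank}(e) = \operatorname{tr}(e) = \tfrac{1}{|L|}\sum_{l \in L}\operatorname{tr}(P(l)) = \tfrac{1}{|L|}\sum_{l \in L}\zeta(l) = \left\langle \zeta,\ \one \right\rangle$, the last equality using $\overline{\one(l)} = 1$. Under the hypothesis $\left\langle \zeta,\ \one \right\rangle = 0$ this forces $\dim W^L = 0$, hence $e = 0$ and therefore $T = |L|\,e = 0$, which is the assertion.

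I do not expect any genuine obstacle: this is the standard averaging argument, essentially the statement that $\tfrac{1}{|L|}\sum_l P(l)$ is the invariant-space projection, combined with character orthogonality. The only point requiring a little care is the placement of the complex conjugate in $\left\langle \zeta,\ \one \right\rangle$, which is harmless because $\one$ is real-valued. An equivalent short route, if one prefers to avoid the idempotent computation, is to invoke Maschke's theorem to split $P$ into irreducible constituents: on each trivial constituent $\sum_l P(l)$ acts as $|L|$ times the identity, while on each nontrivial irreducible constituent the averaged operator maps into the (zero) invariant subspace and hence vanishes; since $\left\langle \zeta,\ \one \right\rangle = 0$ means no trivial constituent occurs, $T = 0$.
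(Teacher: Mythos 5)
Your proof is correct: the paper gives no argument of its own for this lemma, citing it as well known (Lemma~4.1 of Isaacs), and your averaging/idempotent computation --- showing $\tfrac{1}{|L|}\sum_{l}P(l)$ is the projection onto the invariant subspace whose rank equals $\left\langle \zeta,\ \one\right\rangle$ --- is essentially the standard proof behind that citation. Nothing further is needed.
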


Using this, we now derive the following result for permutation representations. The proof is inspired by the proof of Corollary 4.2 of \cite{isaacs2008character}.

\begin{cor}\label{cor:charsums}
Let $M$ be a finite group acting on a set $\Omega$, let $\rho$ be the corresponding permutation character, and consider a subgroup $L \leq M$. If $\{O_{1},\ \ldots,\ O_{k}\}$ is the set of $L$-orbits on $\Omega$, then for all $x \in M$, we have 
\[\sum\limits_{y\in L} \rho(xy) = \left(\sum\limits_{i=1}^{k} \frac{|O_{i} \cap x(O_{i})|}{|O_{i}|} \right) |L|,\]
where $x(O_{i})=\{x\cdot o\ : \ o \in O_{i}\}$, for all $i \in [k]$.
\end{cor}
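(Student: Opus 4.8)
The plan is to generalize the averaging argument that proves the orbit-counting lemma, keeping track of the extra permutation $x$. First I would recall that for a permutation representation $P$ of $M$ on $\bb{C}[\Omega]$ with basis $\{\ups_\omega : \omega \in \Omega\}$, we have $\rho(m) = \mrm{Tr}(P(m))$, and more precisely $\rho(m)$ counts the number of $\omega \in \Omega$ fixed by $m$. The key object is the averaging operator $\frac{1}{|L|}\sum_{y \in L} P(y)$, which is the projection of $\bb{C}[\Omega]$ onto the subspace of $L$-invariants; since $L$ has orbits $O_1,\ldots,O_k$, this subspace has the orbit-sum vectors $\mathbf{u}_i := \ups_{O_i} = \sum_{\omega \in O_i}\ups_\omega$ as an orthogonal basis, with $\|\mathbf{u}_i\|^2 = |O_i|$ in the standard inner product making $\{\ups_\omega\}$ orthonormal.

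Next I would compute $\sum_{y \in L}\rho(xy) = \mrm{Tr}\left(P(x)\sum_{y\in L}P(y)\right) = |L|\,\mrm{Tr}\left(P(x)\,\Pi_L\right)$, where $\Pi_L = \frac{1}{|L|}\sum_{y\in L}P(y)$ is the $L$-invariant projection. To evaluate this trace, I would expand $\Pi_L$ in the orthonormal basis: $\Pi_L = \sum_{i=1}^k \frac{1}{|O_i|}\mathbf{u}_i\mathbf{u}_i^{*}$ (each rank-one term being the orthogonal projection onto $\mathbb{C}\mathbf{u}_i$). Then $\mrm{Tr}(P(x)\Pi_L) = \sum_{i=1}^k \frac{1}{|O_i|}\mathbf{u}_i^{*}P(x)\mathbf{u}_i$. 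Since $P(x)\mathbf{u}_i = \sum_{\omega \in O_i}\ups_{x\cdot\omega} = \ups_{x(O_i)}$, the inner product $\mathbf{u}_i^{*}P(x)\mathbf{u}_i = \langle \ups_{O_i}, \ups_{x(O_i)}\rangle = |O_i \cap x(O_i)|$. Assembling these gives exactly $\sum_{y\in L}\rho(xy) = |L|\sum_{i=1}^k \frac{|O_i \cap x(O_i)|}{|O_i|}$, as claimed.

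Alternatively, and perhaps more in the spirit of the cited Corollary 4.2 of \cite{isaacs2008character}, I could avoid the projection language and argue by a double-counting identity: $\sum_{y\in L}\rho(xy) = \#\{(\omega, y) \in \Omega \times L : xy\cdot\omega = \omega\} = \sum_{\omega\in\Omega}\#\{y \in L : y\cdot\omega = x^{-1}\cdot\omega\}$. For a fixed $\omega$ lying in the $L$-orbit $O_i$, the set $\{y \in L : y\cdot\omega = x^{-1}\cdot\omega\}$ is either empty (if $x^{-1}\cdot\omega \notin O_i$) or a coset of the stabilizer $L_\omega$, hence of size $|L|/|O_i|$ (if $x^{-1}\cdot\omega \in O_i$, equivalently $\omega \in x(O_i)$). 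Summing over $\omega \in O_i$ contributes $|O_i \cap x(O_i)| \cdot |L|/|O_i|$, and summing over $i$ yields the formula. I would likely present this second, combinatorial derivation as it is cleanest and parallels the referenced proof, perhaps remarking that it reduces to Burnside's lemma when $x = 1$ (so each $x(O_i) = O_i$ and the right side is $k|L|$). I do not anticipate a serious obstacle here; the only point requiring a little care is the bookkeeping of whether the relevant coset is empty, i.e. the condition $x^{-1}\cdot\omega \in O_i$, which must be tracked orbit-by-orbit rather than globally.
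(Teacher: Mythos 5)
Both of your arguments are correct, and they bracket the paper's own proof nicely. Your first (projection) argument is essentially the paper's: the paper chooses an orthogonal basis consisting of the orbit sums $\ups_{O_1},\ldots,\ups_{O_k}$ together with a basis of their orthogonal complement, applies its Lemma~\ref{lem:zerosum} (Isaacs' Lemma~4.1) to conclude that $\sum_{y\in L}P(y)$ is $|L|$ times the block projection onto the span of the orbit sums, and then reads off $\sum_{y\in L}\rho(xy)=\mrm{Trace}\bigl(P(x)\sum_{y\in L}P(y)\bigr)$ from the diagonal entries $|O_i\cap x(O_i)|/|O_i|$; your identity $\frac{1}{|L|}\sum_{y\in L}P(y)=\sum_i\frac{1}{|O_i|}\ups_{O_i}\ups_{O_i}^{*}$ is exactly this block statement phrased as a rank-one decomposition of the averaging projector, so the content is the same. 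Your second, preferred argument is genuinely different and more elementary: counting pairs $(\omega,y)$ with $xy\cdot\omega=\omega$ and using orbit--stabilizer to see that, for $\omega\in O_i$, the fibre $\{y\in L:\ y\cdot\omega=x^{-1}\cdot\omega\}$ is empty unless $\omega\in x(O_i)$ and otherwise is a coset of $L_\omega$ of size $|L|/|O_i|$, gives the formula with no linear algebra or character theory at all, and specializes to Burnside's lemma at $x=1$. What the paper's (and your first) approach buys is that it sits inside the module-theoretic framework the paper is already using and generalizes to non-permutation characters via the same zero-sum lemma; what your double count buys is brevity, transparency, and independence from any choice of basis or inner product. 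Either write-up would be acceptable; the only bookkeeping point in the combinatorial version is the one you already flagged, namely that the nonemptiness condition $x^{-1}\cdot\omega\in O_i$ must be tracked orbit by orbit.
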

\begin{proof}
 We consider the vector $\bb{C}[\Omega]$ spanned by the formal basis $\{\ups_{\omega}\ :\ \omega \in \Omega\}$. The action of $G$ on $\Omega$ endows $\bb{C}[\Omega]$ with a $G$-module structure.
 Given $X\subseteq \Omega$, define $\ups_{X}:=\sum\limits_{x\in X} \ups_{x}$.
Let $U\leq \bb{C}[\Omega]$ be the subspace spanned by $\{\ups_{O_{i}}\ :\ i \in [k]\}$. We note that $U$ is a $\bb{C}H$-submodule of $\bb{C}[\Omega]$. We endow the space $\bb{C}[\Omega]$ with the standard inner product $\left\langle \cdot\ ,\ \cdot \right\rangle$ with $\{\ups_{\omega}\ :\ \omega \in \Omega\}$ as an orthonormal basis.
Noting that the standard dot product on $\bb{C}[\Omega]$ is $L$-invariant, we note that $U^{\perp}$ is also a $\bb{C}L$-module. Let $\beta$ be an ordered orthogonal basis of $\bb{C}[\Omega]$ with $\beta_{i}:=\ups_{O_{i}}$ for all $i \in [k]$, and $\beta_{i} \in U^{\perp}$ for all $i \geq k+1$. Given $g \in G$, let $P(g)$ be the matrix with respect to $\beta$ of the linear map defined by the action of $g$ on $\bb{C}[\Omega]$. The map $P : L \to \gl_{|\Omega|}(\bb{C})$ is a representation affording the character $\rho$.

As $U^{\perp}$ is an $L$-submodule and $L$ acts trivially on $U$, there is an $L$-representation\\ $Q: L \to \gl_{|\Omega|-k}(\bb{C})$ such that $P(l)= \left(\begin{array}{c | c}
I & 0 \\
\hline
0 & Q(l)
\end{array}  \right),$ for all $l \in L$. As the action of $L$ has $k$ orbits, by the orbit-counting lemma, we have $\left\langle \rho|_{L},\ \one \right\rangle_{L}=k$. Therefore, if $Q$ affords the character $\chi$, we must have $\left\langle \chi,\ \one \right\rangle_{L}=0$. Now using Lemma~\ref{lem:zerosum}, we have 

\begin{equation}\label{eq:lsum}
 \sum\limits_{l \in L}P(l) = \left(\begin{array}{c | c}
|H|I & 0 \\
\hline
0 & 0
\end{array}  \right).
\end{equation} 

 By definition of $P$, given $i,j \in[k]$, and $x \in G$, we have 
\begin{equation}\label{eq:ij}\left(P(x)\right)_{i,j}= \frac{\left\langle \ups_{O_{i}}, \ups_{x(O_{j})} \right \rangle}{\left\langle\ups_{O_{i}},\ups_{O_{i}}\right\rangle}= \frac{|O_{i}\cap x(O_{j})|}{|O_{i}|}.
\end{equation}

Since \[\sum\limits_{l \in L}\rho(xh)  = \mrm{Trace}\left(P(x)\left[\sum\limits_{l \in L}P(l) \right]\right),\] 
the result now follows by using \eqref{eq:lsum} and \eqref{eq:ij}.
 
\end{proof}

We recall the set $S=\{g \in G\ :\ \mbf{c}g(\mbf{0})) = g(\mbf{e}_{n})\}$, where $\mbf{c}:=(J_{n},\ \mbf{e}_{n})$. We note that  both the centralizer $\mbf{C}$ of $\mbf{c}$ and the pointwise stabilizer $H$ of $(\mbf{0},\ \mbf{e}_{n})$ are subsets of $S$. We will now show that $S=H\mbf{C}$. We do this in a series of lemmas. 

\begin{lemma}\label{lem:centralizer}
$\mbf{C}$ is a regular subgroup of $G$.
\end{lemma}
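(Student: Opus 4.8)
The claim is that $\mbf{C}$, the centralizer in $G=\agl(n,2)$ of the element $\mbf{c}=(J_n,\mbf{e}_n)$, is a regular subgroup of $G$ acting on $V=\bb{F}_2^n$. Regularity means $|\mbf{C}|=|V|=2^n$ and $\mbf{C}$ acts transitively (equivalently, freely) on $V$. I would first compute $\mbf{C}$ explicitly. An element $(N,w)\in G$ centralizes $(J_n,\mbf{e}_n)$ iff $(N,w)(J_n,\mbf{e}_n)=(J_n,\mbf{e}_n)(N,w)$, which unwinds to the two conditions $NJ_n=J_nN$ and $w+N\mbf{e}_n=\mbf{e}_n+J_nw$, i.e. $N$ commutes with $J_n$ in $\gl(n,2)$ and $(J_n-I)w=N\mbf{e}_n-\mbf{e}_n=(N-I)\mbf{e}_n$.

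\textbf{Key steps.} First, I would identify the centralizer of $J_n$ in the full matrix algebra $M_n(\bb{F}_2)$: since $J_n$ is a single Jordan block (a cyclic/nonderogatory matrix), its commutant is exactly the algebra $\bb{F}_2[J_n]$ of polynomials in $J_n$, which has dimension $n$; the invertible ones form the centralizer of $J_n$ in $\gl(n,2)$, and every nonzero element of $\bb{F}_2[J_n]$ other than those in the maximal ideal $(J_n-I)\bb{F}_2[J_n]$ is invertible — in fact $\bb{F}_2[J_n]\cong \bb{F}_2[t]/(t-1)^n$ is a local ring, so the units are exactly $I + (J_n-I)\bb{F}_2[J_n]$, giving $|C_{\gl(n,2)}(J_n)|=2^{n-1}$. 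Second, for the translation part: given such an $N=p(J_n)$, I need to count solutions $w$ to $(J_n-I)w=(N-I)\mbf{e}_n$. Note $(N-I)\mbf{e}_n=(p(J_n)-I)\mbf{e}_n \in \mathrm{Im}(J_n-I)$ automatically (since $N-I$ is a multiple of $J_n-I$ in $\bb{F}_2[J_n]$), so the equation is always solvable, and its solution set is a coset of $\ker(J_n-I)$, which is $1$-dimensional, hence has $2$ elements. Therefore $|\mbf{C}|=2^{n-1}\cdot 2 = 2^n=|V|$. Third, transitivity: I would show the orbit of $\mbf{0}$ under $\mbf{C}$ is all of $V$. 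The element $(I,w)$ lies in $\mbf{C}$ precisely when $(J_n-I)w=0$, i.e. $w\in\ker(J_n-I)=\langle\mbf{e}_1\rangle$ (assuming the convention makes $\mbf{e}_1$ the kernel vector of $J_n-I$); more usefully, for each $w$ with $(J_n-I)w \in \mathrm{Im}(J_n-I)$ — i.e. every $w$, since $\mathrm{Im}(J_n-I)$ is a hyperplane and... — hmm, let me instead argue directly: the map $\mbf{C}\to V$, $(N,w)\mapsto (N,w)\cdot\mbf{0}=w$ has image $\{w: \exists N\in C_{\gl}(J_n),\ (J_n-I)w=(N-I)\mbf{e}_n\}$; as $N=p(J_n)$ ranges over units, $(N-I)\mbf{e}_n=(p(J_n)-I)\mbf{e}_n$ ranges over all of $\mathrm{Im}(J_n-I)$ on the right side (a hyperplane), and then $w$ ranges over the preimage, which together with the $2$-element fibre $\ker(J_n-I)$ recovers all $2^n$ vectors. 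Since $|\mbf{C}|=2^n=|V|$ and $\mbf{C}$ is transitive, the action is regular. Alternatively, and more cleanly, since $|\mbf{C}|=|V|$, it suffices to show the stabilizer of $\mbf{0}$ in $\mbf{C}$ is trivial: $(N,\mbf{0})\in\mbf{C}$ forces $(J_n-I)\mbf{0}=(N-I)\mbf{e}_n$, i.e. $N\mbf{e}_n=\mbf{e}_n$; but $N=p(J_n)$ with $p(J_n)\mbf{e}_n=\mbf{e}_n$ — since $\mbf{e}_n$ is a cyclic vector for $J_n$ (as $J_n$ is a single block, $\mbf{e}_n, J_n\mbf{e}_n,\dots$ span $V$), $p(J_n)\mbf{e}_n=\mbf{e}_n$ forces $p(J_n)=I$, so $N=I$. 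Hence the point stabilizer of $\mbf{0}$ is trivial, and $\mbf{C}$ is regular.

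\textbf{Main obstacle.} The only genuinely delicate point is pinning down the structure of $C_{\gl(n,2)}(J_n)$ and verifying the solvability/fibre-size count for the translation equation; both hinge on the single fact that $J_n$ is nonderogatory, so that $\mbf{e}_n$ (or $\mbf{e}_1$, depending on the orientation of the block) is a cyclic vector and $C_{M_n(\bb{F}_2)}(J_n)=\bb{F}_2[J_n]$ is a local ring of dimension $n$ with units forming a group of order $2^{n-1}$. Once that is in hand, the cardinality count $|\mbf{C}|=2^{n-1}\cdot 2=2^n$ and the triviality of the stabilizer of $\mbf{0}$ are short. I would therefore organize the proof as: (1) reduce the centralizer conditions to $[N,J_n]=0$ and $(J_n-I)w=(N-I)\mbf{e}_n$; (2) invoke nonderogacy to get $C_{\gl(n,2)}(J_n)=\{p(J_n): p(J_n)\in\gl(n,2)\}$ of order $2^{n-1}$; (3) observe the translation equation is always solvable with a $2$-element solution set, giving $|\mbf{C}|=2^n$; (4) show $(N,\mbf{0})\in\mbf{C}\Rightarrow N=I$ using the cyclic vector, concluding that $\mbf{C}_{\mbf{0}}=1$ and hence $\mbf{C}$ is regular.
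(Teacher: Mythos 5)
Your proposal is correct. The first half coincides with the paper's argument: both reduce membership in $\mbf{C}$ to the two conditions $NJ_{n}=J_{n}N$ and $(J_{n}+I)w=(N+I)\mbf{e}_{n}$ (characteristic $2$), and both count $|\mbf{C}|=2^{n-1}\cdot 2=2^{n}$, you via the observation that the commutant of the nonderogatory matrix $J_{n}$ is the local ring $\bb{F}_{2}[J_{n}]$ with $2^{n-1}$ units and that the translation equation is always solvable with solution set a coset of the one-dimensional $\ker(J_{n}+I)$, the paper via the explicit Toeplitz description $\mbf{C}_{\gl(n,2)}(J_{n})=\{N^{\ovr{a}}\}$ and the explicit solutions $x_{\ovr{a}},y_{\ovr{a}}$. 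Where you genuinely diverge is the last step: you deduce regularity by showing the stabilizer of $\mbf{0}$ in $\mbf{C}$ is trivial (if $(N,\mbf{0})\in\mbf{C}$ then $(N+I)\mbf{e}_{n}=0$, and since $\mbf{e}_{n}$ is a cyclic vector for $J_{n}$ and $N$ commutes with $J_{n}$, this forces $N=I$), so the orbit of $\mbf{0}$ has size $2^{n}$ and order equals degree; the paper instead shows every non-identity element of $\mbf{C}$ is a derangement by a rank count on the augmented matrices $\left[N^{\ovr{a}}+I\ |\ x_{\ovr{a}}\right]$ in row-reduced echelon form, i.e.\ it proves semiregularity rather than transitivity. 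Your route is cleaner and more conceptual for the lemma itself; the paper's pays for its explicitness because the concrete parametrization \eqref{eq:centralizer} (the data $\ovr{a}_{x}$, $b_{x}$, $z_{x}$, and facts like \eqref{eq:image0em}) is reused heavily in the character-sum computations of \S~5, so with your proof one would still need to record that explicit description separately. Minor points: your mid-paragraph attempt to prove transitivity directly is muddled, but the trivial-stabilizer alternative you settle on is complete and correct, and your caveat about the orientation of the Jordan block is resolved by the paper's convention, under which $(J_{n}+I)\mbf{e}_{j}=\mbf{e}_{j-1}$ and $\mbf{e}_{n}$ is indeed cyclic.
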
 
\begin{proof}
We recall that $\mbf{c}=(J_{n},\ \mbf{e}_{n})$.
Given $(N,v)\in \agl(n,2)$, we have 
\[(N,v)(J_{n},\ \mbf{e}_{n})(N,v)^{-1}=(NJ_{n}N^{-1},\ v+N\mbf{e}_{n} + NJ_{n}N^{-1}v).\]

Therefore, $(N,v) \in \mbf{C}$ if and only if the following are true:
(i)$N \in \mbf{C}_{\gl(n,2)}(J_{n})$; and (ii) $(J_{n}+I)v= (N+I)\mbf{e}_{n}$. Given $\overrightarrow{a} \in \bb{F}^{n-1}_{2}$, define the matrix $N^{\overrightarrow{a}}$ to be the upper triangular matrix with $N^{\overrightarrow{a}}_{ij}=a_{n-(j-i)}$ and $N^{\overrightarrow{a}}_{ii}=1$, for all pairs $j,i \in [n]$ with $j>i$. 
We recall from linear algebra that $\mbf{C}_{\gl(n,2)}(J_{n}) =\{N^{\overrightarrow{a}}:\ \overrightarrow{a} \in \bb{F}^{n-1}_{2}\}$. We observe that the equation $(J_{n}+I)v=(N^{\overrightarrow{a}}+I)\mbf{e}_{n}$ has two solutions, namely $x_{\overrightarrow{a}}:=\begin{pmatrix}
0  \\   \overrightarrow{a}
\end{pmatrix}$ and $ y_{\overrightarrow{a}} :=\begin{pmatrix}
1 \\ \overrightarrow{a}
\end{pmatrix}$. We can now conclude that 
\begin{equation}\label{eq:centralizer}
\mbf{C}:=\{(N^{\overrightarrow{a}},\ x_{\overrightarrow{a}}),\ (N^{\overrightarrow{a}},\ y_{\overrightarrow{a}}) :\ \overrightarrow{a} \in \bb{F}^{n-1}_{2} \}.
\end{equation}

Note that when $\overrightarrow{a} \neq \mbf{0}$, the matrix $N^{\overrightarrow{a}}+I$ is in row-reduced echelon form. The augmented matrices $\left[N^{\overrightarrow{a}}+I\ |\ x_{a}\right]$ and $\left[N^{\overrightarrow{a}}+I\ |\ y_{a}\right]$ are also in row-reduced echelon forms. Counting the number of pivots, we see that when $\overrightarrow{a} \neq \mbf{0}$, we have \[\mrm{rank}\left(\left[N^{\overrightarrow{a}}+I\ |\ x_{a}\right]\right)=\mrm{rank}\left(\left[N^{\overrightarrow{a}}+I\ |\ y_{a}\right]\right) =\mrm{rank}(N^{\overrightarrow{a}}+I) +1 .\] Therefore, when $\overrightarrow{a}\neq \mbf{0}$, both $x_{\overrightarrow{a}}$ and $y_{\overrightarrow{a}}$ are not in $\mrm{Im}(N^{\overrightarrow{a}})$. Therefore, every non-identity element of $\mbf{C}_{G}(G)$ is a derangement. Using $|\mbf{C}|=2^{n}$, we can conclude that $\mbf{C}$ is a regular subgroup.
\end{proof}

Let $K:=G_{\mbf{0}}$, the point stabilizer in $G$ of $\mbf{0}$. Since $\mbf{C}$ is a regular subgroup, we have $\mbf{C}\cap K=\{1_{G}\}$ and therefore $\mbf{C}K=G$. So given $s\in S$, there exists $x \in K$ and $y \in\mbf{C}$ such that $yx=s$. By definition of $S$, we have
\[\mbf{e}_{n}= s^{-1}\mbf{c}s(\mbf{0})= x^{-1}\mbf{c}x(\mbf{0})= x^{-1}\mbf{c}(\mbf{0})= x^{-1}(\mbf{e}_{n}).\] Therefore $x \in G_{(\mbf{0},\ \mbf{e}_{n})}=H$, and thus we have 
$S \subseteq \mbf{C}H$. By definition of $S$, we have $\mbf{C}H \subseteq S$. Thus, we have $S=\mbf{C}H$. Computing character sums of cosets of $H$ is beneficial in proving our result. We first start by proving part (2) of the Lemma~\ref{lem:charactersums}.

\subsection{The character $\psi$.}
The action of $G$ on $V$ affords $\pi:=\one+\psi$ as its permutation character. The action of $H$ on $V$ has three orbits: $O_{1}:=\{\mbf{0}\}$, $O_{2}:=\{\mbf{e}_{n}\}$, and $O_{3}:=V\setminus \{\mbf{0},\ \mbf{e}_{n}\}$. By Corollary~\ref{cor:charsums}, we have 
\[\sum\limits_{x \in \mbf{C}}\pi(xH)=\left(\sum\limits_{x\in \mbf{C}} |O_{1}\cap xO_{1}|+ |O_{2}\cap xO_{2}| + \frac{|O_{3}\cap xO_{3}|}{2^{n}-2} \right)|H|.\]
By Lemma~\ref{lem:centralizer}, as $\mbf{C}$ acts regularly on $V$, we note that $|O_{1}\cap xO_{1}|+ |O_{2}\cap xO_{2}|=0$, for every non-identity $x \in \mbf{C}$. By the description (see \eqref{eq:centralizer}) of elements of $\mbf{C}$, we have 
\begin{equation}\label{eq:image0em}
|\{\mbf{0},\ \mbf{e}_{n}\} \cap \{x(\mbf{0}),\ x(\mbf{e}_{n})\} | = \begin{cases}
0\ \text{if $x \in \mbf{C} \setminus \{Id,\ \mbf{c},\ \mbf{c}^{-1}\}$}\\
1\ \text{if $x \in \{\mbf{c},\ \mbf{c}^{-1}\}$, and}\\
2\ \text{if $x=Id$}.
\end{cases}
\end{equation}

Therefore, we have 
\[\sum\limits_{x \in \mbf{C}}\pi(xH) = \left(3 + 2\frac{(2^{n}-3)}{2^{n}-2} +(2^{n}-3)\frac{(2^{n}-4)}{2^{n}-2}\right)|H| = |\mbf{C}||H|.\]

As $\psi=\pi-\one$, we have
\begin{equation}\label{eq:psicharsum}
\sum\limits_{s \in \mbf{C}H}\psi(s^{-1})=0.
\end{equation}

So the coefficient of $\mbf{e}_{\mbf{c}}$ in the right-hand side of $\eqref{eq:imageiso}$ is $0$. For $t \in \mfk{C}$, with $t=xcx^{-1}$, the coefficient of $\ups_{t}$ is
$\dfrac{\psi(1)\psi(x\mbf{C}H)}{|G|}$. We show that this too is zero. Given $s\in \mbf{C}H$, there is a $z \in \mbf{C}$ such that $\mbf{C}Hs^{-1}=\mbf{C}Hz^{-1}$. As $\psi$ is a class function, we have
 \[\psi(\mbf{C}Hs^{-1})=\psi(\mbf{C}Hz^{-1})= \psi(z^{-1}\mbf{C}Hz^{-1}z)=\psi(\mbf{C}H)=0.\] By Lemma 2.1 of \cite{d2023cameron}, we can now conclude that $\psi(x\mbf{C}H)=0$. Therefore, the coefficient of all $\ups_{t}$ in the right-hand side of $\eqref{eq:imageiso}$ is zero, for all $t$. This shows part (2) of our lemma. Next, we prove the result for the character $\theta$. 
 \subsection{The character $\theta$.}
 Consider the $2$-transitive action of $\gl(n,2)$ on $V\setminus \{\mbf{0}\}$. This extends to a $2$-transitive action of $G$ on $V\setminus\{\mbf{0}\}$, with $V$ as its kernel. By the definition of $\theta$, the permutation character corresponding to this action is $\rho:=\one + \theta$. The action of $H$ on $V\setminus \{\mbf{0}\}$ has two orbits: $O_{1}=\{\mbf{e}_{n}\}$ and $O_{2}:=V\setminus \{\mbf{0},\ \mbf{e}_{n}\}$. Using Corollary \ref{cor:charsums} and $\theta=\rho-\one$, we compute that 
 \begin{equation}\label{eq:thetacharsum}
\sum\limits_{s \in \mbf{C}H}\psi(s^{-1})=|H|.
\end{equation}
\subsection{The character $\alpha$}
We consider the action of $G$ on $V^{\{2\}}$ (set of $2$-subsets of $V$) and the corresponding permutation character $\pi^{\{2\}}$. Using \eqref{eq:alphadef}, \eqref{eq:psicharsum}, and \eqref{eq:thetacharsum}, we have 
\begin{equation}\label{eq:alphapirelation}
\alpha(\mbf{C}H)= \pi^{\{2\}}(\mbf{C}H)- (|C_{G}|+1)|H|.
\end{equation}

As discussed after \eqref{eq:Korbitssets}, the $H$-orbits on $V^{\{2\}}$ are:
\begin{itemize}
\item $O_{1}:=\{\{\mbf{0},\ \mbf{e}_{n}\}\}$,
\item $O_{2}:=\{\{\mbf{0},\ v\} :\ v \in V\setminus \{\mbf{0},\ \mbf{e}_{n}\}\}$,
\item  $O_{3}= \{\{\mbf{e}_{n},\ v\} :\ v \in V\setminus \{\mbf{0},\ \mbf{e}_{n}\}\}$, \item $O_{4}:=\{\{v,\ w\} :\ v, w \in V\setminus \{\mbf{0},\ \mbf{e}_{n}\}\ \&\ v+w=\mbf{e}_{n}\}$, 
\item and $O_{5}:=\{\{v,\ w\} :\ v,\ w,\ v+w \in V\setminus \{\mbf{0},\ \mbf{e}_{n}\}\}$.
\end{itemize}

To apply Corollary~\ref{cor:charsums}, we now compute $|O_{i} \cap x(O_{i}) |$ for all $x \in \mbf{C}$ and for all $i \in [5]$. From the description of $\mbf{C}$ in \eqref{eq:centralizer}, we note that given $x\in C$, there is a unique $\ovr{a}_{x} \in \bb{F}^{n-1}_{2}$ and a unique $b_{x} \in \left\{ \left( \begin{array}{c}
0 \\ \ovr{a}
\end{array} \right),\ \left(\begin{array}{c}
1 \\ \ovr{a}
\end{array}  \right\}\right)$, such that $x=(N_{\ovr{a}_{x}},\ b_{x})$. Let $v_{0,x}=x(\mbf{0})$, $v_{n,x}=x(\mbf{e}_{n})$, and $z_{x}:=N_{\ovr{a}}(\mbf{e}_{n})$. We note that, given $v,w\in V$, we have $x\cdot v+x \cdot w =N_{\ovr{a}_{x}}(v+w)$. 
From this, we observe the following:
\begin{subequations}\label{eq:imagesoforbits}
\begin{align}
xO_{1} & =\{\{v_{0,x},\ v_{n,x}\}\}\\
xO_{2} &= \{\{v_{0,\ x},\ w \}\ :\ w\in V \setminus \{v_{0,x},\ v_{n,\ x}\}\} \label{eq:xO2}\\
xO_{3} &= \{\{v_{n,\ x},\ w \}\ :\ w\in V \setminus \{v_{0,x},\ v_{n,\ x}\}\} \label{eq:xO3}\\
xO_{4} &= \{ \{v,\ w\}\ :\ v,w \in V \setminus \{v_{0,x},\ v_{n,\ x}\}\ \&\ v+w=z_{x} \}\\
xO_{5} &= \{ \{v,\ w\}\ :\ v,w \in V \setminus \{v_{0,x},\ v_{n,\ x}\}\ \&\ v+w \notin \{z_{x},\ \mbf{0}\} \}.
\end{align}
\end{subequations}

We now compute $|O_{i}\cap xO_{i}|$. We start with some auxiliary computations. Given $x\in \mbf{C}$ and $z\in V\setminus \{\mbf{0}\}$, we define, 
\begin{align*}
L_{x,z} :=&\{\{v,\ w\}\ :\ v,w \in V \setminus \{v_{0,x},\ v_{n,\ x}\}\ \&\ v+w=z \},\ \text{and}\\
A_{x,z} :=& \{\{v_{0,x},\ v_{0,x}+z\},\ \{v_{n,x},\ v_{n,x}+z\}   \}.
\end{align*}
 For ease of notation, set 
$L_{z}:=L_{Id,z}$. As $x\cdot v+ x \cdot w=N_{\ovr{a}_{x}}(z)$, for all $x\in \mbf{C}$ and $z\in V\setminus \{\mbf{0}\}$, we have 
\begin{subequations}\label{eq:linerelations}
\begin{align}
|L_{z}| &=  (2^{n-1}-2 + \delta_{z,\mbf{e}_{n}}); \label{eq:linesize} \\
L_{x,\ N_{\ovr{a}_{x}}z} &= x L_{z};\label{eq:lineim}\\
L_{z} \cap L_{x,z} &= L_{z}\setminus A_{x,z}; \label{eq:lineint}\\
xO_{4} & = L_{x,\ z_{x}}; \ \text{and}\\
xO_{5} &= \bigcup\limits_{z\in V \setminus \{\mbf{0},\ z_{x}\}} L_{x,z}. \label{eq:O5disunion}
\end{align}
\end{subequations}

We have $z_{x}=\mbf{e}_{n}$ if and only if $x=Id$ or $x=\mbf{s}:=\{Id,\ \mbf{e}_{1}\}$. By the definitions of $L_{x,z}$, for all $x\in \mbf{C}$ and $z\neq \tilde{z}$, we have $L_{z}\cap L_{x, \tilde{z}}=\emptyset$. By using the above relations, we have 
\begin{equation}\label{eq:O4int}
|O_{4}\cap xO_{4}|= \begin{cases} 2^{n-1}-1 & \text{if $x=Id$},\\
2^{n-1}-2 & \text{if $x=\mbf{s}$},\\
0& \text{otherwise.}
\end{cases}
\end{equation}

Now consider $|O_{5} \cap xO_{5}|$. Using \eqref{eq:O5disunion}, we first note that 
\[O_{5}\cap xO_{5}= \bigcup\limits_{z\in V\setminus \{\mbf{0},\ \mbf{e}_{n},\ z_{x}\}} L_{x}\cap L_{x,z}.\] 

Using \eqref{eq:lineint} and \eqref{eq:linesize}, we see that
\begin{equation}\label{eq:O5int-0}
|O_{5}\cap xO_{5}| = \sum\limits_{z \in V\setminus \{\mbf{0},\ \mbf{e}_{n},\ z_{x}\}} \left(2^{n-1}-2-|L_{z}\cap A_{x,z}|\right).
\end{equation}

First, we note that since $v_{0,x}+v_{n,x}=N_{\ov{a}_{x}}(\mbf{e}_{n})=z_{x}$, for all $z\neq z_{x}$, $v_{0,x}+z\neq v_{n,x}$. Therefore, for all $z\neq z_{x}$, we have $\{v_{0,x},\ v_{0,x}+z\}\cap \{v_{n,x},\ v_{n,x}+z\}=\emptyset$ and thus $|A_{x,z}|=2$. Since $\{a,b\} \in L_{z}$ if and only if $\{a,b\}\cap\{\mbf{0},\ \mbf{e}_{n}\}=\emptyset$, we note that 
\begin{equation}\label{eq:LAint}
|L_{z} \cap A_{x,z}|= 2-|\{v_{n,x},\ v_{n,x}+z,\ v_{0,x},\ v_{0,x}+z\}\cap \{\mbf{0}, \mbf{e}_{n}\}|.
\end{equation}

We now compute $|O_{5}\cap xO_{5}|$. This shall be accomplished in the following  cases:

\paragraph{Case 1:} Assume that $x\notin \{Id,\ \mbf{c},\ \mbf{c}^{-1},\ \mbf{s}\}$. In this case, we have \[\{z_{x},\ v_{0,x},\ v_{n,x}\} \cap \{\mbf{0},\ \mbf{e}_{n}\}= \emptyset.\] 

So by \eqref{eq:LAint}, for all $z\notin \{\mbf{0},\ \mbf{e}_{n},\ \mbf{z}_{x}\}$, we have
\[|L_{z} \cap A_{x,z}|= 2-|\{\ v_{n,x}+z,\ v_{0,x}+z\}\cap \{\mbf{0}, \mbf{e}_{n}\}|.\]

Since $v_{0,x}+v_{n,x}=z_{x}$, we have 
\[ \{ v_{n,x}, v_{0,x},v_{n,x}+\mbf{e}_{n}, v_{0,x}+\mbf{e}_{n}\} \cap \{\mbf{0},\mbf{e}_{n}, z_{x}\} =\emptyset.\]

We therefore have for $z \notin \{\mbf{0},\ \mbf{e}_{n},\ z_{x}\}$,
\begin{equation}\label{eq:laintergenx}
|L_{z} \cap A_{x,z}| = \begin{cases}
                        2  &
    \text{if $z \notin \{ v_{n,x}, v_{0,x},v_{n,x}+\mbf{e}_{n}, v_{0,x}+\mbf{e}_{n}\}$ } \\
                        0  & \text{otherwise.}
                        \end{cases} 
\end{equation}

Now using \eqref{eq:O5int-0}, we see that for all  $x\notin \{Id,\ \mbf{c},\ \mbf{c}^{-1},\ \mbf{s}\}$, we have 
\begin{equation}\label{eq:O5intgenx}
|O_{5} \cap xO_{5}|= (2^{n}-7)(2^{n-1}-4)+4(2^{n-1}-3)=2^{2n-1}-11\times(2^{n-1})+16.
\end{equation}

\paragraph{Case 2:} Assume that $x=\mbf{c}$. In this case, we have $v_{0,x}=\mbf{e}_{n}$, $v_{n,x}=\mbf{e}_{n-1}$, and $z_{x}=\mbf{e}_{n}+\mbf{e}_{n-1}$.
So by \eqref{eq:LAint}, for all $z\notin \{\mbf{0},\ \mbf{e}_{n},\ \mbf{z}_{x}\}$, we have
\[|L_{z} \cap A_{x,z}|= 1-|\{\ \mbf{e}_{n-1}+z,\ \mbf{e}_{n}+z\}\cap \{\mbf{0}, \mbf{e}_{n}\}|.\]

We therefore have for $z \notin \{\mbf{0},\ \mbf{e}_{n},\ z_{x}\}$,

\begin{equation}\label{eq:lainterc}
|L_{z} \cap A_{x,z}| = \begin{cases}
                        0 &
    \text{if $z = \mbf{e}_{n-1}$ } \\
                        1  & \text{otherwise.}
                        \end{cases} 
\end{equation}
Now using \eqref{eq:O5int-0}, we have

\begin{equation}\label{eq:O5intc}
|O_{5} \cap \mbf{c}O_{5}|= (2^{n}-4)(2^{n-1}-3)+(2^{n-1}-2)=2^{2n-1}-9\times(2^{n-1})+10.
\end{equation}

As $|O_{5} \cap \mbf{c}O_{5}|=|\mbf{c^{-1}O_{5}} \cap O_{5}|$, we also have

\begin{equation}\label{eq:O5intcinv}
|O_{5} \cap \mbf{c}O_{5}|= (2^{n}-4)(2^{n-1}-3)+(2^{n-1}-2)=2^{2n-1}-9\times(2^{n-1})+10.
\end{equation}

\paragraph{Case 3:} Assume $x=\mbf{s}$. We have $z_{x}=\mbf{e}_{n}$, $v_{0,x}=\mbf{e}_{1}$, and $v_{n,x}=\mbf{e}_{1}+\mbf{e}_{n}$. So by \eqref{eq:LAint}, for all $z\notin \{\mbf{0},\ \mbf{e}_{n}\}$, we have
\[|L_{z} \cap A_{x,z}|= 2-|\{\ \mbf{e}_{1}+z,\ \mbf{e}_{1}+\mbf{e}_{n}+z\}\cap \{\mbf{0}, \mbf{e}_{n}\}|.\]

We therefore have for $z \notin \{\mbf{0},\ \mbf{e}_{n} \}$,

\begin{equation}\label{eq:lainters}
|L_{z} \cap A_{x,z}| = \begin{cases}
                        2 &
    \text{if $z \notin \{\mbf{e}_{1},\ \mbf{e}_{1}+\mbf{e}_{n}\}$, } \\
                        0  & \text{otherwise.}
                        \end{cases} 
\end{equation}

Now using \eqref{eq:O5int-0}, we have

\begin{equation}\label{eq:O5ints}
|O_{5} \cap \mbf{s}O_{5}|= (2^{n}-4)(2^{n-1}-4)+2(2^{n-1}-2)=2^{2n-1}-10\times 2^{n-1}+12.
\end{equation}

Now by \eqref{eq:O5intgenx}, \eqref{eq:O5intc}, \eqref{eq:O5intcinv}, and \eqref{eq:O5ints}, we have

\begin{equation}\label{eq:O5int}
|O_{5}\cap xO_{5}|= \begin{cases} 2^{2n-1}-6\times 2^{n-1}+4 & \text{if $x=Id$},\\
2^{2n-1}-9\times(2^{n-1})+10 & \text{if $x \in \{\mbf{c},\ \mbf{c}^{-1}\}$},\\
2^{2n-1}-10\times 2^{n-1}+12& \text{if $x=\mbf{s}$, and}\\
2^{2n-1}-11\times(2^{n-1})+16 & \text{otherwise.}
\end{cases}
\end{equation}

It follows from \eqref{eq:xO2} and \eqref{eq:xO3} that 

\begin{equation}\label{eq:O23int}
|O_{2}\cap xO_{2}|=|O_{3}\cap xO_{3}|= \begin{cases} 2^{n}-2 & \text{if $x=Id$},\\
0 & \text{if $x \in \{\mbf{c},\ \mbf{c}^{-1}\}$},\\
1 & \text{otherwise.}
\end{cases}
\end{equation}

It is also clear that 
\begin{equation}\label{eq:O1int}
|O_{1}\cap xO_{1}|= \begin{cases}
    1 &\text{if $x=Id$ and} \\
    0 & \text{otherwise.}
\end{cases}.
\end{equation}

Now using Corollary~\ref{cor:charsums} along with \eqref{eq:O4int}, \eqref{eq:O1int}, \eqref{eq:O23int}, and \eqref{eq:O5int}, we compute that
\[\pi^{\{2\}}(\mbf{C}H)=(|C|+2)|H|,\] and thus, by \eqref{eq:alphapirelation}, we have 
\begin{equation}\label{eq:alphacharsum}
\sum\limits_{s \in \mbf{C}H}\alpha(s^{-1})=|H|.
\end{equation}
\subsection{The character $\beta$.}
We consider the action of $G$ on $V^{(2)}$ (set of pairs of distinct points in $V$) and the corresponding permutation character $\pi^{(2)}$. Using \eqref{eq:betadef}, \eqref{eq:psicharsum}, \eqref{eq:thetacharsum}, and \eqref{eq:alphacharsum}  we have 
\begin{equation}\label{eq:betapirelation}
\beta(\mbf{C}H)= \pi^{(2)}(\mbf{C}H)- (|\mbf{C}|+2)|H|.
\end{equation}

The $H$-orbits on $V^{(2)}$ are described in \eqref{eq:Horbitspairs}. To apply Corollary~\ref{cor:charsums}, we now compute $|\mfk{O}^{(2)}_{i} \cap x(\mfk{O}^{(2)}_{i}) |$ for all $x \in \mbf{C}$ and for all $i \in [8]$.

Since $\mbf{C}$ is a regular subgroup, for any non-identity $x\in \mbf{C}$, we have $x(\mbf{0})\neq \mbf{0}$ and $x(\mbf{e}_{n})\neq \mbf{e}_{n}$. Therefore, we have 

\begin{equation}\label{eq:pairorbitint1-6}
|\mfk{O}^{(2)}_{i} \cap x\mfk{O}^{(2)}_{i}|=\delta_{x,\ Id}\times |\mfk{O}^{(2)}_{i}|,\ \text{for all $1\leq i\leq 6$}.
\end{equation}   

We note that for $i=7,8$, we have $(v,w) \in \mfk{O}^{(2)}_{i}$ if and only if $(w,v) \in \mfk{O}^{(2)}_{i}$. Therefore, we have $|\mfk{O}^{(2)}_{7} \cap x \mfk{O}^{(2)}_{7}|=2|O_{4}\cap xO_{4}|$ and $|\mfk{O}^{(2)}_{8} \cap x \mfk{O}^{(2)}_{8}|=2|O_{5}\cap xO_{5}|$, where $O_{4}$ and $O_{5}$ are as defined in the previous subsection. Using Corollary~\ref{cor:charsums}, \eqref{eq:pairorbitint1-6}, \eqref{eq:O4int}, and \eqref{eq:O5int}, we compute that
\[\pi^{\{(2)\}}(\mbf{C}H)=(|C|+3+\frac{1}{2^{n-1}-1})|H|,\] and thus, by \eqref{eq:betapirelation}, we have 
\begin{equation}\label{eq:betacharsum}
\sum\limits_{s \in \mbf{C}H}\beta(s^{-1})=|H|\left(1+ \dfrac{1}{2^{n-1}-1}\right).
\end{equation} 
This concludes the proof of Lemma~\ref{lem:charactersums}.
\section{Further Work.}
We end with an open problem for future research. Theorem~\ref{cor:3t} gives a complete characterization of intersecting sets in the class of $3$-transitive groups. Theorem~\ref{thm:snstablity} due to Ellis, is a  Hilton--Milner type characterization of ``large'' intersecting sets in $\Sym(n)$. Plaza \cite{plaza2015stability} proved a similar result for intersecting sets in $\pgl(2,q)$, and our result, Theorem~\ref{thm:agln2stablity} is a  Hilton--Milner analogue for $\agl(n,2)$. We conjecture that intersecting sets in all $3$-transitive groups satisfy a Hilton--Milner type characterization.
\begin{conj}
There exists an absolute constant $c_{0}<1$ such that the following is true. If $G$ is a $3$-transitive group  of degree $n$ and $S$ is an intersecting set in $G$, with $|S|\geq c_{0}|G|/n$ , then $S$ is a subset of a canonical intersecting set in $G$.
\end{conj}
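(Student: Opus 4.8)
The plan is to mimic, family by family, the argument used for $\agl(n,2)$ in \S\ref{sec:stability}: combine Ellis's generalized ratio bound (Theorem~\ref{thm:stabilityratiobound}) with Plaza's quasi-stability theorem (Theorem~\ref{thm:EFF}, which the excerpt records as valid for \emph{every} $3$-transitive group), and then bootstrap ``close to canonical'' up to ``contained in a canonical set'' via the equality case of the Delsarte--Hoffman bound (Corollary~\ref{cor:ratiobound}). Since we only seek a single absolute constant $c_0<1$, the finitely many sporadic $3$-transitive groups (the Mathieu groups and $\mrm{Alt}(7)\ltimes\bb{F}_2^4$) are handled for free: each satisfies strict-EKR by Corollary~\ref{cor:3t}, so for each there is some constant below $1$ above which the only intersecting sets are canonical, and one takes the maximum over this finite list. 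Thus it suffices to produce a uniform constant for each infinite family in the classification: $\Sym(n)$, $\mrm{Alt}(n)$, the almost simple groups with socle $\psl(2,q)$, and $\agl(n,2)$. Of these, $\Sym(n)$ is Theorem~\ref{thm:snstablity}, $\pgl(2,q)$ is Plaza's result, and $\agl(n,2)$ is Theorem~\ref{thm:agln2stablity}; the genuinely new cases are $\mrm{Alt}(n)$ and the groups with $\psl(2,q)\le G\le\pgal(2,q)$.

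For a fixed family I would first establish the two spectral inputs that drive the $\agl(n,2)$ proof. Writing $\one+\psi$ for the natural permutation character of the degree-$n$ action, I would show (a) that the least eigenvalue of the derangement graph $\Gamma_G$ equals $\lambda_\psi=-\de_G/(n-1)$ with eigenspace exactly the standard isotypic component $U_\psi$, and (b) a uniform spectral gap, namely a constant $\kappa<1$ independent of the group within the family such that the second-smallest eigenvalue $\mu$ satisfies $|\mu|\le\kappa|\lambda_\psi|$. As in \S\ref{sec:stability}, both statements follow from three ingredients: a uniform lower bound on the derangement proportion $\pr_G=\de_G/|G|$; lower bounds on the nontrivial irreducible character degrees, so that the trace estimate \eqref{eq:eigenbound} forces every non-standard eigenvalue to be small relative to $|\lambda_\psi|$; and a nonnegativity argument excluding any competitor for the least eigenvalue, exactly as $\lambda_\theta>0$ was used in \eqref{eq:theta}. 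For $\mrm{Alt}(n)$ one adapts the $\Sym(n)$ eigenvalue analysis, while for socle $\psl(2,q)$ one starts from the known character degrees of $\pgl(2,q)$ and uses Clifford theory to transfer control to the bounded-index overgroups.

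With (a) and (b) in hand the endgame is uniform across families. Ellis's Theorem~\ref{thm:stabilityratiobound} shows that any intersecting set $S$ with $|S|=c|G|/n$ and $c$ near $1$ satisfies a bound of the shape of \eqref{eq:distanceboundaffine}, so that $\ind_S$ is Euclidean-close to $U_G=U_\one+U_\psi$. Plaza's Theorem~\ref{thm:EFF} then produces a canonical intersecting set $T$ with $|S\,\Delta\,T|$ controllably small, exactly as in Corollary~\ref{cor:affinesymdiff}. Finally I would run the bootstrap from the proof of Theorem~\ref{thm:agln2stablity}: were there some $s\in S\setminus T$, the equality case of the ratio bound (Corollary~\ref{cor:ratiobound}) would force $s$ to have $\de_G/(n-1)$ neighbours in $T$, whence $|T\setminus S|\ge\de_G/(n-1)$, contradicting the smallness of $|S\,\Delta\,T|$ once $\pr_G$ is bounded below. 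This forces $S\subseteq T$ and yields the conjecture.

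The hard part is step (b) for the families with socle $\psl(2,q)$: securing a spectral gap uniform in $q$ that holds for all overgroups $\psl(2,q)\le G\le\pgal(2,q)$ at once. One must understand the character table well enough to certify that $\psi$ alone realises the least eigenvalue and that every other eigenvalue is bounded by a fixed fraction of $|\lambda_\psi|$ — the analogue of the Tiep degree estimates that made the $\gl(n,2)$ case tractable. The derangement proportion is uniformly bounded below for these $2$-transitive actions, but pinning the second eigenvalue down uniformly across the field-automorphism extensions is where the real difficulty lies; a family-free argument producing a single $\kappa$ would immediately deliver the absolute constant $c_0$ that the conjecture demands.
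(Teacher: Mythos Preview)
The statement you are attempting to prove is presented in the paper as a \emph{conjecture} in the ``Further Work'' section; the paper offers no proof, so there is nothing to compare your argument against. Your write-up is not a proof either, but rather a strategy that extends the paper's method for $\agl(n,2)$ to the remaining families in the classification of $3$-transitive groups. As a strategy it is reasonable and well-organised: the reduction to infinite families via strict-EKR for the finitely many sporadic groups is correct (for a fixed finite group with strict-EKR, any intersecting set of size strictly between $|G|/n-1$ and $|G|/n$ is impossible, so some $c_0<1$ works trivially), and the three-step template---spectral gap, Plaza's quasi-stability, bootstrap via Corollary~\ref{cor:ratiobound}---is exactly how the paper handles $\agl(n,2)$.

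However, the proposal is explicitly incomplete, and you identify the gap yourself. Step (b), the uniform spectral gap, is not established for $\mrm{Alt}(n)$ or for the almost simple $3$-transitive groups with socle $\psl(2,q)$ lying strictly between $\pgl(2,q)$ and $\pgal(2,q)$. Saying ``one adapts the $\Sym(n)$ eigenvalue analysis'' for $\mrm{Alt}(n)$, or ``uses Clifford theory to transfer control to the bounded-index overgroups'' for the $\psl(2,q)$ family, does not supply the actual degree bounds or the analogue of the nonnegativity argument \eqref{eq:theta} needed to isolate $\lambda_\psi$ as the unique least eigenvalue with a quantified gap to the next one. Until those estimates are produced---uniformly in $n$, $q$, and the field-automorphism extension---the conjecture remains open, which is precisely why the paper records it as such.
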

  
\bibliographystyle{plain}
\bibliography{ref}            
\end{document}